\documentclass[11pt,reqno]{amsart}

\usepackage{amsmath}
\usepackage{amsfonts}

\setlength{\textwidth}{16cm} \setlength{\textheight}{20 cm}
\addtolength{\oddsidemargin}{-1.5cm}
\addtolength{\evensidemargin}{-1.5cm}

\newcommand{\trinorm}{|\!|\!|}
\newcommand{\rr}{{\mathbb R}}

\newcommand{\N}{{\mathbb N}}

\newcommand{\ff}{\varphi}

\newtheorem{theorem}{Theorem}[section]
\newtheorem{lemma}[theorem]{Lemma}
\newtheorem{corollary}[theorem]{Corollary}
\newtheorem{proposition}[theorem]{Proposition}

\theoremstyle{definition}

\theoremstyle{remark}

\numberwithin{equation}{section}


\begin{document}

\title[gZK] {Local and global well-posedness for the 2D generalized Zakharov-Kuznetsov equation}

\author[F. Linares  And A. Pastor]{Felipe Linares and Ademir Pastor}

\email{linares@impa.br, apastor@impa.br}

\subjclass[2000]{Primary 35Q53, 35B65; Secondary 35Q60}

\keywords{Zakharov-Kuznetsov equation, local well-posedness, global
well-posedness}


\maketitle


\begin{center}
{Instituto Nacional de Matem\'atica Pura e Aplicada - IMPA,\\
Estrada Dona Castorina 110, CEP 22460-320, Rio de Janeiro, RJ,
Brazil.     }
\end{center}

\begin{abstract}
This paper addresses well-posedness issues for the initial value
problem (IVP) associated with the generalized Zakharov-Kuznetsov
equation, namely,
\begin{equation*}
\quad  \left\{
\begin{array}{lll}
{\displaystyle u_t+\partial_x \Delta u+u^ku_x  =  0,}\qquad (x,y) \in \mathbb{R}^2, \,\,\,\, t>0,\\
{\displaystyle  u(x,y,0)=u_0(x,y)}.
\end{array}
\right.
\end{equation*}
For $2\leq k \leq 7$, the IVP above is shown to be locally
well-posed for data in $H^s(\mathbb{R}^2)$, $s>3/4$. For $k\geq8$,
local well-posedness is shown to hold for data in
$H^s(\mathbb{R}^2)$, $s>s_k$, where $s_k=1-3/(2k-4)$. Furthermore,
for $k\geq3$, if $u_0\in H^1(\mathbb{R}^2)$ and satisfies
$\|u_0\|_{H^1}\ll1$, then the solution is shown to be global in
$H^1(\mathbb{R}^2)$. For $k=2$, if $u_0\in H^s(\mathbb{R}^2)$,
$s>53/63$, and satisfies $\|u_0\|_{L^2}<\sqrt3 \,
\|\varphi\|_{L^2}$, where $\varphi$ is the corresponding ground
state solution, then the solution is shown to be global in
$H^s(\mathbb{R}^2)$.
\end{abstract}


\section{Introduction}

This paper is concerned with the initial value problem (IVP)
associated with the generalized Zakharov-Kuznetsov (gZK) equation in
two space dimensions, namely,
\begin{equation}\label{IVP}
\left\{
\begin{array}{lll}
{\displaystyle u_t+\partial_x \Delta u+u^ku_x  =  0,  }  \qquad (x,y) \in \mathbb{R}^2, \,\,\,\, t>0, \\
{\displaystyle  u(x,y,0)=u_0(x,y)},
\end{array}
\right.
\end{equation}
where $u$ is a real-valued function, and $k\geq2$ is an integer
number.

When $k=1$, the equation in \eqref{IVP} (termed simply as ZK
equation) was formally deduced by Zakharov and Kuznetsov \cite{ZK}
(see also \cite{LS} and references therein) as an asymptotic model
to describe the propagation of nonlinear ion-acoustic waves in a
magnetized plasma. The equation in \eqref{IVP} may also be seen as a
natural, two-dimensional extension of the one-dimensional
generalized Korteweg-de Vries (KdV) equation
$$
u_t+u_{xxx}+u^ku_x  =  0.
$$

The aim of this paper is to establish local and global
well-posedness to the IVP \eqref{IVP}. The notion of well-posedness
will be the usual one in the context of nonlinear dispersive
equations, that is, it includes existence, uniqueness, persistence
property, and continuous dependence upon the data.

Before describing our results, let us recall what has been done so
far regarding the gZK equation. In \cite{Fa}, Faminskii considered
the IVP associated with the ZK equation. He showed local and global
well-posedness for initial data in $H^m(\mathbb{R}^2)$, $m\geq1$ integer.
In \cite{BL}, Biagioni and Linares dealt with the IVP
associated with the modified ZK equation (i.e. that one in
\eqref{IVP} with $k=2$). They proved local and global well-posedness
for data in $H^1(\mathbb{R}^2)$. Linares and Pastor (\cite{LP})
studied the IVP associated with both the ZK and modified ZK
equations. They improved the results in \cite{BL}, \cite{Fa} by
showing that both IVP's are locally well-posed for initial data in
$H^s(\mathbb{R}^2)$, $s>3/4$. Moreover, by using the techniques
introduced in Birnir \textit{at al.} \cite{BPS}, \cite{BKPSV}, they
proved that the IVP associated with the modified ZK equation is
ill-posed, in the sense that the flow-map data-solution is not
uniformly continuous, for data in $H^s(\mathbb{R}^2)$, $s\leq0$. It
should be noted that the method employed in \cite{BL}, \cite{Fa},
\cite{LP} to show local well-posedness, was the one developed by
Kenig, Ponce, and Vega \cite{KPV} (when dealing with the generalized
KdV equation), which combines smoothing effects, Strichartz-type
estimates, and a maximal function estimate together with the Banach
contraction principle.

It is worth mentioning that in \cite{LP}, the authors proved that if
$u_0\in H^1(\mathbb{R}^2)$ and satisfies $\|u_0\|_{L^2}<\sqrt3 \,
\|\varphi\|_{L^2}$, where $\varphi$ is the unique positive radial
solution (hereafter refereed to as the ground state solution) of the
elliptic equation
\begin{equation}\label{solwave}
-\Delta\varphi+\varphi-\varphi^3=0,
\end{equation}
then the solution $u(t)$ of \eqref{IVP}, with $k=2$, may be globally
extended in $H^1(\mathbb{R}^2)$. It should be pointed out that if
$\|u_0\|_{L^2}\geq\sqrt3 \, \|\varphi\|_{L^2}$, the question of
showing whether or not the $H^1$-solution, with $u(0)=u_0$, blows up
in finite time is currently open.


It should also be observed that questions of existence and orbital
stability of solitary-wave solutions, and unique continuation  were
addressed, respectively, by de Bouard \cite{dB}, and Panthee
\cite{Pa}. In \cite{dB}, the author proved that the positive,
radially symmetric  solitary waves are orbitally stable if $k=1$,
and orbitally unstable otherwise. In \cite{Pa}, the author
established that if the solution of the ZK equation is sufficiently
regular and is compactly supported in a nontrivial time interval,
then it vanishes identically.

Now, let us describe our results. We first recall that the
quantities
\begin{equation}\label{mass}
I_1(u(t))=\int_{\mathbb{R}^2}u^2(t)\;dxdy
\end{equation}
and
\begin{equation}\label{energy}
I_2(u(t))=\int_{\mathbb{R}^2}\left\{u_x^2(t)+u_y^2(t)-\frac{2}{(k+1)(k+2)}u^{k+2}(t)\right\}\;dxdy
\end{equation}
are conserved by the flow of the gZK equation, that is,
$I_1(u(t))=I_1(u(0))$ and $I_2(u(t))=I_2(u(0))$, as long as the
solution exists. Thus, these quantities could lead local rough
solutions to global ones. So, it is natural to ask what would be the
largest Sobolev space where local well-posedness holds. To answer
this question, we perform a scaling argument, by noting that if $u$
solves \eqref{IVP}, with initial data $u_0$, then
$$
u_\lambda(x,y,t)=\lambda^{2/k} u(\lambda x, \lambda y, \lambda^3t)
$$
also solves \eqref{IVP}, with initial data
$u_\lambda(x,y,0)=\lambda^{2/k} u_0(\lambda x, \lambda y)$, for any
$\lambda>0$. Hence,
\begin{equation}\label{scaling}
\|u(\cdot,\cdot,0) \|_{\dot{H}^s}=\lambda^{2/k+s-1}
\|u_0\|_{\dot{H}^s}
\end{equation}
where $\dot{H}^s=\dot{H}^s(\mathbb{R}^2)$ denotes the homogeneous
Sobolev space of order $s$. As a consequence of \eqref{scaling}, the
scale-invariant Sobolev space for the gZK equation is
$H^{s_c(k)}(\mathbb{R}^2)$, where $s_c(k)=1-2/k$. Therefore, one expects
that the Sobolev spaces $H^{s}(\mathbb{R}^2)$ for studying the well-posedness of
\eqref{IVP} are those with indices $s>s_c(k)$.

We divide the paper into two parts. The first one deals with local
and global well-posedness in the case $k\geq3$, whereas the second
part is devoted to establishing the global well-posedness in the
case $k=2$ (the critical case).

Our first result regards local well-posedness of \eqref{IVP} for $3\leq k\leq7$.
More precisely, we prove the following.

\begin{theorem}   \label{theorem1}
Assume $3\leq k\leq 7$. For any $u_0 \in H^s(\mathbb{R}^2)$,
$s>3/4$, there exist $T=T(\|u_0\|_{H^s})>0$ and a unique solution of
the IVP \eqref{IVP}, defined in the interval $[0,T]$, such that
\begin{equation}\label{b1}
u \in C([0,T];H^s(\mathbb{R}^2)),
\end{equation}
\begin{equation}\label{b2}
\|D^s_xu_x\|_{L^\infty_xL^2_{yT}}  +
\|D^s_yu_x\|_{L^\infty_xL^2_{yT}}  <\infty,
\end{equation}
\begin{equation}\label{b3}
 \|u\|_{L^{p_k}_T L^\infty_{xy}}+   \|u_x\|_{L^{12/5}_T L^\infty_{xy}}
  <\infty,
\end{equation}
and
\begin{equation}\label{b4}
\|u\|_{L^4_x L^\infty_{yT}}
  <\infty,
\end{equation}
where $p_k=\frac{12(k-1)}{7-12\gamma}$ and $\gamma\in(0,1/12)$.
Moreover, for any $T'\in(0,T)$ there exists a neighborhood $W$ of
$u_0$ in $H^s(\mathbb{R}^2)$ such that the map
$\widetilde{u}_0\mapsto \widetilde{u}(t)$ from $W$ into the class
defined by \eqref{b1}--\eqref{b4} is smooth.
\end{theorem}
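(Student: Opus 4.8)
The plan is to run a standard Kenig--Ponce--Vega type fixed-point argument for the integral (Duhamel) equation
\begin{equation*}
u(t)=U(t)u_0-\int_0^t U(t-t')\,\bigl(u^k u_x\bigr)(t')\,dt',
\end{equation*}
where $U(t)=e^{-t\partial_x\Delta}$ is the unitary group associated with the linear ZK flow. First I would set up the linear estimates for $U(t)$: the $L^2$-preservation, the sharp Kato smoothing estimate giving $\|D_x^s U(t)u_0\|_{L^\infty_x L^2_{yT}}$ and $\|D_y^s U(t)u_0\|_{L^\infty_x L^2_{yT}}$ (the analogue of the one-derivative-gain local smoothing for KdV, adapted to two dimensions with the full gradient $\partial_x\Delta$), the Strichartz-type estimates controlling $\|U(t)u_0\|_{L^{p}_T L^\infty_{xy}}$ and $\|D_x^{1/2}U(t)u_0\|_{L^{q}_T L^\infty_{xy}}$-type norms, and the maximal-function estimate $\|U(t)u_0\|_{L^4_x L^\infty_{yT}}\lesssim \|u_0\|_{H^s}$ for $s>3/4$ (this is exactly the source of the $s>3/4$ threshold, mirroring the 1D KdV maximal function bound that forces $s>3/4$). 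I would also record the dual/inhomogeneous versions of these (via the $TT^*$ argument and a Christ--Kiselev lemma to pass from the global-in-time estimate to the retarded integral), so that each of the four norms in \eqref{b1}--\eqref{b4} can be bounded for the Duhamel term by an $L^1_T$-in-time (or $L^1_x$, as appropriate) norm of $u^k u_x$.

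Next I would define the complete metric space $X_T^a$ as the ball of radius $a$ (to be chosen comparable to $\|u_0\|_{H^s}$) in the intersection of the four function spaces appearing in \eqref{b1}--\eqref{b4}, and define $\Phi(u)$ to be the right-hand side of the Duhamel formula. The core of the proof is the nonlinear estimate: bound each of the four norms of $\Phi(u)$ by $C\|u_0\|_{H^s}+C\,T^\theta \|u\|_{X_T}^{k+1}$ for some power $\theta>0$, and similarly for the difference $\Phi(u)-\Phi(v)$. To estimate $u^k u_x$ one splits, by the Leibniz rule for fractional derivatives, into the term where the $s$-derivative falls on $u_x$ (controlled by \eqref{b2}, after placing the $u^k$ factor in $L^\infty$ via \eqref{b3}/\eqref{b4} and Hölder in $t$, $x$, $y$) and the terms where derivatives fall on the $u^k$ factor (controlled by combinations of \eqref{b3}, \eqref{b4}, and $\|u\|_{L^\infty_T H^s}$, using Sobolev embedding $H^s(\mathbb R^2)\hookrightarrow L^\infty$ fails for $s<1$, so one is genuinely forced to use the space-time norms rather than $H^s$ alone — this is the whole point of \eqref{b3}--\eqref{b4}). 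The restriction $k\le 7$ enters here: the exponent $p_k=\tfrac{12(k-1)}{7-12\gamma}$ must be finite and the Hölder bookkeeping distributing $k$ copies of $u$ among the available $L^p_T L^\infty_{xy}$ and $L^4_x L^\infty_{yT}$ norms only closes (with a positive power of $T$ to spare) when $k\le 7$; for $k\ge 8$ one loses this and is forced to raise $s$, which is the content of the companion theorem not stated here.

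Having established $\Phi:X_T^a\to X_T^a$ and that $\Phi$ is a contraction for $T$ small enough depending only on $\|u_0\|_{H^s}$, the Banach fixed point theorem yields a unique fixed point $u\in X_T^a$, which is the desired solution satisfying \eqref{b1}--\eqref{b4}; uniqueness in the full class (not just in the ball) follows by a standard argument comparing two solutions on a possibly shorter interval and bootstrapping. For the statement that $u\in C([0,T];H^s)$ rather than merely $L^\infty_T H^s$, I would use the continuity of $t\mapsto U(t)u_0$ in $H^s$ together with the continuity in $H^s$ of the Duhamel term, which follows once $u^k u_x$ is known to lie in an appropriate space. Finally, the smoothness of the data-to-solution map on a neighborhood $W$ of $u_0$ follows from the implicit function theorem (or equivalently from differentiating the fixed-point equation), since $\Phi$ depends analytically — indeed polynomially — on $u$ and linearly on $u_0$; one checks that the linearized operator $I-D_u\Phi$ is invertible on $X_{T'}$ for $T'<T$, which is immediate from the contraction estimate. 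I expect the main obstacle to be the nonlinear estimate in the case $k=3$ (the smallest admissible $k$ here), where the nonlinearity is least regular relative to the available smoothing and one must be careful with the fractional Leibniz rule and the precise Hölder exponents — in particular tracking that every term genuinely comes with a positive power $T^\theta$ so that the contraction closes; a secondary technical point is the correct use of the Christ--Kiselev lemma, whose hypotheses require the relevant exponents to be ordered strictly, forcing the $\gamma\in(0,1/12)$ slack in the definition of $p_k$.
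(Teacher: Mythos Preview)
Your plan is the paper's: a KPV contraction on the Duhamel formulation in the space built from the norms \eqref{b1}--\eqref{b4}, fed by the smoothing estimate, Strichartz, the $L^4_x$ maximal function, and the fractional Leibniz rule. Two specifics are off, and the first is a genuine gap.

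The bare Strichartz estimate (Proposition~\ref{proposition1} at $\theta=1$) only yields $\|U(t)f\|_{L^q_TL^\infty_{xy}}\le c\|D_x^{-\varepsilon/2}f\|_{L^2}$ for $q\le 3$, whereas already $p_3=24/(7-12\gamma)>3$; so Strichartz alone cannot produce the norm $\|u\|_{L^{p_k}_TL^\infty_{xy}}$ in \eqref{b3}, and your outline would not close. The paper's declared ``main new ingredient'' is the Kenig--Ziesler embedding $\|f\|_{L^\infty_{xy}}\le c\bigl(\|f\|_{L^{p_\delta}_{xy}}+\|D^\delta_xf\|_{L^{p_\delta}_{xy}}+\|D^\delta_yf\|_{L^{p_\delta}_{xy}}\bigr)$ of Lemma~\ref{lemma4}, which combined with Strichartz at $\varepsilon=0$ gives Lemma~\ref{lemma5}: $\|U(t)f\|_{L^r_TL^\infty_{xy}}\le cT^{\gamma_2}\|f\|_{H^\delta}$ for $r\le 3/\theta$, $\theta>1-\delta$. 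Taking $\delta=3/4<s$ pushes the admissible $r$ up to just below $12$, which accommodates $p_k$ exactly for $3\le k\le7$; the restriction $\gamma<1/12$ is precisely the condition $p_k<12$ needed here.

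Second, the paper never invokes Christ--Kiselev. For each of the norms in \eqref{b2}--\eqref{b4} the Duhamel term is handled by Minkowski in $t'$, the group law $U(t-t')=U(t)U(-t')$, and unitarity of $U$, reducing everything to $\int_0^T\|u^ku_x(t')\|_{H^s}\,dt'$ (see \eqref{b12}--\eqref{b16}). Your Christ--Kiselev route would also work, but the slack $\gamma\in(0,1/12)$ has nothing to do with its hypotheses; it comes entirely from the H\"older/embedding bookkeeping above.
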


To prove Theorem \ref{theorem1} we use the technique
introduced by Kenig, Ponce, and Vega \cite{KPV} to study the IVP associated to the KdV equation.
We point out that the proof of Theorem 1.1 in \cite{LP} does not apply to the case
$k\geq3$. Here, instead of using an $L^2_x$ maximal function
estimate, we use an $L^4_x$ one (see Proposition \ref{proposition2}
below). However, the main new ingredient is the embedding given in
Lemma \ref{lemma4}. Observe that for $3\leq k \leq 7$, we obtain
$s_c(k)<3/4$, so that, our result does not reach the indices conjectured
by the scaling argument.

Next, we deal with the case $k\geq8$. Our main result in this case
reads as follows.

\begin{theorem}   \label{theorem2}
Let $k\geq8$ and $s_k=1-\frac{3}{2(k-2)}$. For any $u_0 \in
H^s(\mathbb{R}^2)$, $s>s_k$, there exist $T=T(\|u_0\|_{H^s})>0$ and
a unique solution of the IVP \eqref{IVP}, defined in the interval
$[0,T]$, such that
\begin{equation}\label{b1.1}
u \in C([0,T];H^s(\mathbb{R}^2)),
\end{equation}
\begin{equation}\label{b2.1}
\|D^s_xu_x\|_{L^\infty_xL^2_{yT}}  +
\|D^s_yu_x\|_{L^\infty_xL^2_{yT}}  <\infty,
\end{equation}
\begin{equation}\label{b3.1}
 \|u\|_{L^{\widetilde{p}_k}_T L^\infty_{xy}}+   \|u_x\|_{L^{12/5}_T L^\infty_{xy}}
  <\infty,
\end{equation}
and
\begin{equation}\label{b4.1}
\|u\|_{L^4_x L^\infty_{yT}}
  <\infty,
\end{equation}
where $\widetilde{p}_k=\frac{2(k-2)}{1-2\gamma}$ and $\gamma>0$ is
sufficiently small. Moreover, for any $T'\in(0,T)$ there exists a
neighborhood $U$ of $u_0$ in $H^s(\mathbb{R}^2)$ such that the map
$\widetilde{u}_0\mapsto \widetilde{u}(t)$ from $U$ into the class
defined by \eqref{b1.1}--\eqref{b4.1} is smooth.
\end{theorem}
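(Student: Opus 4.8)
The plan is to run a contraction-mapping argument on the Duhamel formulation
$$
u(t)=U(t)u_0-\int_0^t U(t-t')\bigl(u^k u_x\bigr)(t')\,dt',
$$
where $U(t)$ is the unitary group associated with $\partial_x\Delta$, exactly in the spirit of Kenig--Ponce--Vega for the generalized KdV equation. First I would fix $s\in(s_k,1)$ and introduce the function space $X_T^s$ consisting of all $u$ for which the four norms appearing in \eqref{b1.1}--\eqref{b4.1} are finite, equipped with the obvious sum norm, and consider the closed ball of radius $R\sim\|u_0\|_{H^s}$ in (a suitable version of) this space. The linear estimates for $U(t)u_0$ in each of these four norms — the sharp Kato smoothing effect controlling $\|D_x^s\partial_x U(t)u_0\|_{L^\infty_xL^2_{yT}}$, the Strichartz-type bounds giving the $L^{\widetilde p_k}_TL^\infty_{xy}$ and $L^{12/5}_TL^\infty_{xy}$ norms, and the $L^4_xL^\infty_{yT}$ maximal-function estimate (Proposition \ref{proposition2}) — are all available from the preliminary section, as is the key embedding Lemma \ref{lemma4}. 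So the linear part of the map lands in $X_T^s$ with norm $\lesssim\|u_0\|_{H^s}$ uniformly in $T\le1$.

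The heart of the matter is the nonlinear estimate: I must bound the Duhamel integral of $u^k u_x$ in each of the four norms by something of the form $T^\theta\,\|u\|_{X_T^s}^{k+1}$ with a positive power $\theta=\theta(k,s,\gamma)>0$, so that shrinking $T$ gives a contraction on the ball. After applying the dual smoothing/Strichartz/maximal-function estimates to $\int_0^tU(t-t')F\,dt'$, this reduces to estimating $\|D_x^s(u^k u_x)\|_{L^1_xL^2_{yT}}$ (and the analogous $D_y^s$ piece) together with $\|u^k u_x\|_{L^1_TL^2_{xy}}$-type quantities. I would distribute the fractional derivative via the fractional Leibniz rule, putting $D_x^s$ either on a factor of $u$ or on $u_x$; the worst term is the one with all derivatives on the single factor bearing $u_x$, namely $\|u\|^k_{L^{?}}\,\|D_x^s u_x\|_{L^\infty_xL^2_{yT}}$, where the $k$ copies of $u$ in $L^\infty_{xy}$ (or a mix of $L^\infty_{xy}$ and $L^4_xL^\infty_{yT}$) are controlled by Hölder in $t$ and the $L^{\widetilde p_k}_TL^\infty_{xy}$ and $L^4_xL^\infty_{yT}$ norms. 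The precise choice $\widetilde p_k=\frac{2(k-2)}{1-2\gamma}$ and the threshold $s_k=1-\frac{3}{2(k-2)}$ is exactly what makes the Hölder exponents close with a surplus power of $T$: with $k$ factors in $L^{\widetilde p_k}_T$ one needs $k/\widetilde p_k<1$, i.e. $k(1-2\gamma)<2(k-2)$, which holds for $k\ge8$ and small $\gamma$, and the residual integrability in $t$ furnishes $\theta>0$; simultaneously $s<s_k+\varepsilon$ is what is needed for Lemma \ref{lemma4} to supply the $L^\infty_{xy}$ control of $u$ from the smoothing and maximal-function norms. One must also treat the terms where $D_x^s$ or $D_y^s$ lands on one of the undifferentiated $u$'s, which are strictly better since they trade a derivative gain for the already-controlled smoothing norm.

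Having established the self-map property on the ball, the contraction estimate for differences $u-v$ follows from the same bilinear/multilinear manipulations applied to $u^ku_x-v^kv_x=\sum_{j}u^j(u-v)v^{k-1-j}u_x+v^k(u_x-v_x)$, again gaining $T^\theta$; a standard fixed-point argument then yields existence and uniqueness in the ball, and uniqueness in the full space is obtained by the usual comparison argument (or by noting the difference estimate propagates). Persistence $u\in C([0,T];H^s)$ comes from the continuity of the Duhamel terms in $t$ with values in $H^s$, and the smoothness of the data-to-solution map on a neighborhood $U$ of $u_0$ follows from the implicit function theorem applied to the smooth (indeed polynomial) map defining the fixed-point equation, since the nonlinearity $u\mapsto u^ku_x$ is a polynomial. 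The main obstacle I anticipate is bookkeeping the fractional Leibniz rule so that every term genuinely carries a positive power of $T$ while keeping $s$ as low as $s_k$: one has to be careful that the Hölder exponents in the $x$-variable (balancing $L^4_xL^\infty_{yT}$ against $L^\infty_xL^2_{yT}$ to reconstruct $L^1_xL^2_{yT}$) and in the $t$-variable (balancing the $k$ copies of the $L^{\widetilde p_k}_T$ norm) are simultaneously admissible, which is precisely the algebraic content encoded in the definitions of $\widetilde p_k$ and $s_k$, and leaves only an $\varepsilon$ of room that the parameter $\gamma>0$ absorbs.
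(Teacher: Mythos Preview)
Your overall strategy---contraction mapping on the Duhamel formula in the space built from the four norms \eqref{b1.1}--\eqref{b4.1}---is exactly what the paper does, and the linear estimates you cite are the right ones. However, your account of \emph{where the threshold $s_k$ comes from} is off, and this is the one genuinely new point in passing from $3\le k\le7$ to $k\ge8$.

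In the paper the nonlinear term is not pushed into $L^1_xL^2_{yT}$ via the dual smoothing estimate; instead every Duhamel norm is handled by Minkowski plus the homogeneous linear estimates, so that the only nonlinear quantity one ever has to bound is $\int_0^T\|u^ku_x\|_{H^s_{xy}}\,dt'$. For the worst piece $\int_0^T\|u^kD^s_xu_x\|_{L^2_{xy}}\,dt'$ one places \emph{two} factors of $u$ in $L^4_xL^\infty_{yT}$ and the remaining $k-2$ factors in $L^{\widetilde p_k}_TL^\infty_{xy}$ (not all $k$, as you wrote), obtaining
\[
\int_0^T\|u^kD^s_xu_x\|_{L^2_{xy}}\,dt'\le T^\gamma\,\|u\|^{k-2}_{L^{\widetilde p_k}_TL^\infty_{xy}}\,\|u\|^2_{L^4_xL^\infty_{yT}}\,\|D^s_xu_x\|_{L^\infty_xL^2_{yT}}.
\]
The H\"older exponents here close for \emph{any} $k\ge3$ and do not by themselves force $s>s_k$; your computation ``$k/\widetilde p_k<1$ iff $k\ge8$'' is not the relevant constraint.

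The constraint $s>s_k$ arises instead from the \emph{linear} estimate for the norm $\|U(t)f\|_{L^{\widetilde p_k}_TL^\infty_{xy}}$. To bound this one invokes Lemma~\ref{lemma5} with $\delta=s$ and $\theta=1-s+\sigma$, which requires $\widetilde p_k\le 3/\theta$, i.e.\ $s>1-\dfrac{3(1-2\gamma)}{2(k-2)}+\sigma=s_k+\dfrac{3\gamma}{k-2}+\sigma$. Choosing $\gamma,\sigma>0$ small is what makes $s>s_k$ suffice. (Your sentence ``$s<s_k+\varepsilon$ is what is needed for Lemma~\ref{lemma4}'' has the inequality reversed and the wrong lemma; Lemma~\ref{lemma4} is only an ingredient inside Lemma~\ref{lemma5}.) Once this point is corrected, the rest of your outline---difference estimate, uniqueness, persistence, smoothness of the flow map via the implicit function theorem---goes through exactly as you describe.
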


The proof of Theorem \ref{theorem2} is very close to that of Theorem
\ref{theorem1}. In this case, because of the scaling argument, we do
not expect to prove local well-posedness for all $s>3/4$. Indeed,
note that, for $k\geq8$, we always have, $s_k\geq s_c(k)\geq3/4$.
Moreover, $s_k= s_c(k)$ if and only if $k=8$. Also observe that in
the case $k=8$, we get $s_8=s_c(8)=3/4$. This implies that our
result, for $k=8$, is ``almost'' sharp, but for $k>8$ there is still
a gap between the scaling and our results, which is evidenced by the
theorem below.

\begin{theorem}\label{ill-posedness}
Let $k\geq3$. Then, the IVP \eqref{IVP} is ill-posed for data in
$H^{s_c(k)}(\mathbb{R}^2)$, $s_c(k)=1-2/k$, in the sense that the
map data-solution is not uniformly continuous.
\end{theorem}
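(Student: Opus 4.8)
The plan is to follow the now-standard scheme of Birnir--Kenig--Ponce--Stavrou--Vega \cite{BKPSV} and Birnir--Ponce--Svanstedt \cite{BPS}, which the authors already invoke for the $k=2$ modified ZK case in \cite{LP}: exhibit two families of approximate (or exact) solutions that start $O(\delta)$ apart in $H^{s_c(k)}$ but whose difference is forced to be $O(1)$ at a fixed later time, thereby breaking uniform continuity of the flow map on the critical space. The natural objects to use are the solitary-wave-type profiles or, more simply, scaled and translated smooth bumps tailored so that the nonlinearity produces an explicit phase shift. Concretely, I would start from the scaling and translation symmetries recorded in the introduction: given a fixed Schwartz profile $\phi$, set
\begin{equation*}
u_{N,\omega}(x,y,t) = \omega\, N^{2/k}\,\phi\bigl(N(x - c\,t),\, N y\bigr)\, ,
\end{equation*}
up to the appropriate exact-soliton correction, where $N$ is a large frequency parameter and $\omega$ an amplitude parameter. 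One checks from \eqref{scaling} that $\|u_{N,\omega}(\cdot,\cdot,0)\|_{\dot H^{s_c(k)}}$ is \emph{independent of} $N$, which is exactly the degeneracy that makes $H^{s_c(k)}$ critical and is what we exploit.

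Next I would compute, for two choices $\omega_1,\omega_2$ with $|\omega_1-\omega_2|$ small, the $H^{s_c(k)}$-distance of the initial data and of the solutions at time $t$. The key is that the $u^k u_x$ nonlinearity, acting on a profile of amplitude $\sim \omega N^{2/k}$ and spatial scale $1/N$, generates over the natural time scale a translation in $x$ whose speed depends on $\omega^k$; so at a fixed time the two solutions are translated relative to each other by an amount proportional to $(\omega_1^k - \omega_2^k)\,t$ (after undoing the internal scaling). Choosing the amplitudes so that this relative translation is of order one unit on the internal scale $1/N$, i.e. comparable to the width of the bump, forces
\begin{equation*}
\|u_{N,\omega_1}(t) - u_{N,\omega_2}(t)\|_{H^{s_c(k)}} \gtrsim 1,
\end{equation*}
while at $t=0$ the difference is $\lesssim |\omega_1-\omega_2| \to 0$. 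Making $\omega_1 \to \omega_2$ and $N\to\infty$ in a coordinated way (e.g. $|\omega_1-\omega_2|\sim N^{-\alpha}$ for a suitable $\alpha>0$) then yields sequences of data converging in $H^{s_c(k)}$ whose solutions stay bounded away from each other at a fixed time, which is the desired failure of uniform continuity; the inhomogeneous norm behaves like the homogeneous one here because all the relevant frequencies are $\sim N \to \infty$.

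The main obstacle is the error analysis: if one uses genuine solitary waves, one must know they exist with the right scaling and control the $H^{s_c(k)}$-size of the interaction/remainder terms; if instead one uses smooth bumps as \emph{approximate} solutions, one must run an energy (or perturbative well-posedness) estimate showing that the true solution with that data stays $o(1)$-close in $H^{s_c(k)}$ — or rather in a slightly higher norm controlled by the local theory of Theorems \ref{theorem1}--\ref{theorem2} — to the explicit ansatz on the relevant (short, $N$-dependent) time interval. This requires a careful bookkeeping of how each norm scales in $N$, making sure the error is a genuinely lower-order power of $N$ than the $O(1)$ separation of the main terms; this is exactly the delicate step in \cite{BKPSV} and \cite{BPS}, and the constraint $k\geq 3$ presumably enters in guaranteeing enough room in these power counts (and in the local theory being available above $s_c(k)$). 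Once that estimate is in place, the contradiction with uniform continuity is immediate and the theorem follows.
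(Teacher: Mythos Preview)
Your overall strategy---exploit scale invariance at $s_c(k)$ to build two families of data that are close initially but whose solutions separate because they travel at different speeds---is correct and is the mechanism behind the paper's proof. However, the paper takes a much cleaner route that entirely sidesteps what you flag as ``the main obstacle.''

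Instead of smooth bumps used as approximate solutions followed by an error analysis against the local theory, the paper works with \emph{exact} solitary-wave solutions $u_c(x,y,t)=\varphi_c(x-ct,y)$, where $\varphi_c$ is the positive radial ground state of $-c\varphi_c+\Delta\varphi_c+\frac{1}{k+1}\varphi_c^{k+1}=0$. These obey the explicit scaling $\varphi_c(x,y)=c^{1/k}\varphi_1(\sqrt{c}\,x,\sqrt{c}\,y)$, so a single parameter $c$ simultaneously plays the roles of your $N$ and $\omega$, and---crucially---because the solutions are exact there are no interaction or remainder terms to estimate at all. The critical-norm invariance $\|\varphi_c\|_{\dot H^{s_c(k)}}=a_0$ (independent of $c$) is then a one-line Fourier computation, and writing out the inner product one sees $\|\varphi_{c_1}-\varphi_{c_2}\|_{\dot H^{s_c(k)}}\to 0$ as $c_1/c_2\to 1$. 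For the separation at time $t>0$ the paper takes $c_1=m+1$, $c_2=m$ with $m\to\infty$, expresses $\langle u_{c_1}(t),u_{c_2}(t)\rangle_{\dot H^{s_c(k)}}$ as an oscillatory integral with phase $e^{-it\xi\sqrt{c_1}(c_1-c_2)}$, and applies the Riemann--Lebesgue lemma to conclude the inner product vanishes in the limit, forcing $\|u_{c_1}(t)-u_{c_2}(t)\|_{\dot H^{s_c(k)}}\to\sqrt{2}\,a_0$.

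So your approximate-solution-plus-perturbative-bound scheme would work in principle, but it is substantially more laborious; the existence of an explicit one-parameter family of exact solutions with known scaling reduces the whole argument to a few lines of Fourier analysis. In particular, the restriction $k\geq 3$ does not enter through any ``room in the power counts'' as you suggest---no error bounds are ever needed.
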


Note that the  well-posedness sense in Theorem \ref{ill-posedness}
requires additional smoothness of the map data-solution, and not
only that of merely continuity. However, this is not too strong
because as affirmed in Theorems \ref{theorem1} and \ref{theorem2},
the map data-solution, in those cases, is sufficiently smooth. The
argument to establish Theorem \ref{ill-posedness} is similar to that
of Theorem 1.2 in \cite{LP}, and goes back to the techniques
introduced in \cite{BPS} and \cite{BKPSV}.

One of the main difficulties to obtain possible sharp results is the lack
of some needed estimates in mixed spaces. There is not an available Leibniz rule for fractional
derivatives in mixed spaces $L^q_xL^p_yL^r_T$ for instance. This makes a
difference with the analysis for the generalized KdV for $k\ge 4$. Another
point we should remark is the gain of derivatives we have for the Strichartz estimate
for the linear group. We only get $1/4-\epsilon$ derivatives, $0<\epsilon\ll 1$,
(see Lemma \ref{proposition1} below)
in contrast to the gain of exactly 1/4 derivatives of the KdV linear group.
Because of this we also loose some regularity.

We now turn our attention to the global well-posedness issue.
Our main result is proved under a smallness condition on the initial
data.

\begin{theorem}    \label{globaltheorem}
Let $k\geq3$. Let $u_0\in H^1(\mathbb{R}^2)$ and assume
$\|u_0\|_{H^1}\ll1$, then the local solutions given in Theorems
\ref{theorem1} and \ref{theorem2} can be extended to any time
interval $[0,T]$.
\end{theorem}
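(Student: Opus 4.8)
The plan is to combine the local well-posedness theory (Theorems \ref{theorem1} and \ref{theorem2}) with the conservation laws \eqref{mass} and \eqref{energy} to produce an \emph{a priori} bound on $\|u(t)\|_{H^1}$ that does not deteriorate in time, so that the local solution can be reapplied indefinitely. Since the local existence time $T$ depends only on $\|u_0\|_{H^s}$, and since $H^1 \hookrightarrow H^s$ for $3/4 < s \le 1$, it suffices to show that $\|u(t)\|_{H^1}$ remains bounded (in fact, small) on the whole interval of existence; the continuation argument is then standard. Note that for $k\geq 8$ we have $s_k < 1$, so $H^1$ data is admissible in Theorem \ref{theorem2} as well, and for $3\le k\le 7$ the threshold $3/4$ is below $1$; thus in all cases $k\geq 3$ the local theory applies to $u_0 \in H^1$.

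The heart of the matter is controlling the $\dot H^1$ norm via $I_2$. First I would use conservation of $I_1$ to get $\|u(t)\|_{L^2} = \|u_0\|_{L^2}$. Then, from \eqref{energy},
\begin{equation*}
\|\nabla u(t)\|_{L^2}^2 = I_2(u_0) + \frac{2}{(k+1)(k+2)}\int_{\mathbb{R}^2} u^{k+2}(t)\,dxdy.
\end{equation*}
To estimate the nonlinear term I would apply the Gagliardo--Nirenberg inequality in $\mathbb{R}^2$,
\begin{equation*}
\|u\|_{L^{k+2}}^{k+2} \leq C \|u\|_{L^2}^{2}\,\|\nabla u\|_{L^2}^{k},
\end{equation*}
which holds for every integer $k\geq 1$ (the exponents are $\theta = k/(k+2)$ in the gradient and $2/(k+2)$ in $L^2$, giving total power $k+2$). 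Substituting and writing $f(t) = \|\nabla u(t)\|_{L^2}^2$, $M = \|u_0\|_{L^2}^2$, one obtains
\begin{equation*}
f(t) \leq |I_2(u_0)| + \frac{2C}{(k+1)(k+2)}\, M\, f(t)^{k/2}.
\end{equation*}
Since $k\geq 3$, the exponent $k/2 \geq 3/2 > 1$, so this is a superlinear inequality of the form $f \le a + b f^{k/2}$ with $a,b$ both small when $\|u_0\|_{H^1}$ is small (indeed $|I_2(u_0)| \lesssim \|u_0\|_{H^1}^2 + \|u_0\|_{H^1}^{k+2}$ and $M \lesssim \|u_0\|_{H^1}^2$). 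A standard bootstrap/continuity argument then pins $f(t)$ to the small root: if $f$ is continuous, $f(0)$ is small, and $f$ can never cross the intermediate zero of $g(x) = x - a - bx^{k/2}$, then $f(t)$ stays bounded by (roughly) $2|I_2(u_0)|$ for all $t$ in the existence interval.

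The main obstacle — and the reason the smallness hypothesis $\|u_0\|_{H^1}\ll 1$ is needed — is precisely the superlinearity: for $k\ge 3$ the energy $I_2$ is \emph{not} coercive on $H^1$ without a size restriction, since the focusing nonlinear term can dominate the gradient term when $\|\nabla u\|_{L^2}$ is large. (This is in contrast with $k=2$, the $L^2$-critical case, where coercivity holds under the sharp condition $\|u_0\|_{L^2} < \sqrt 3\,\|\varphi\|_{L^2}$.) Making the bootstrap rigorous requires checking that the constant $2C/((k+1)(k+2))\cdot M$ times the relevant power of the a priori bound is strictly less than $1$, which forces $\|u_0\|_{H^1}$ below an explicit threshold depending on $k$ and the Gagliardo--Nirenberg constant; this is the only genuinely delicate bookkeeping step, and it is where I would be most careful. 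Once the uniform bound $\sup_{t}\|u(t)\|_{H^1} \le C_0(\|u_0\|_{H^1})$ is in hand, iterating the local theorem on successive intervals of fixed length $T = T(C_0)$ extends the solution to arbitrary $[0,T]$, completing the proof.
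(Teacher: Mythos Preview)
Your proposal is correct and follows essentially the same approach as the paper: combine the conserved quantities $I_1$ and $I_2$ with the Gagliardo--Nirenberg inequality $\|u\|_{L^{k+2}}^{k+2}\le C\|u\|_{L^2}^2\|\nabla u\|_{L^2}^{k}$ to obtain a superlinear inequality for the $H^1$ norm, then close it by a continuity/bootstrap argument under the smallness assumption and iterate the local theory. The only cosmetic difference is that the paper tracks $X(t)=\|u(t)\|_{H^1}^2$ directly rather than $f(t)=\|\nabla u(t)\|_{L^2}^2$, which is equivalent since $\|u(t)\|_{L^2}$ is conserved.
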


Theorem \ref{globaltheorem} is proved in a standard fashion, and
relies on a combination of the conserved quantities \eqref{mass} and
\eqref{energy} with the Gagliardo-Nirenberg interpolation inequality.

Next, we will focus on the second part of the paper.
As we already mentioned, the local well-posedness of \eqref{IVP} with $k=2$
for initial data in $H^s(\mathbb{R}^2)$, $s>3/4$, was
obtained in \cite{LP}. Furthermore, we announced that
\eqref{IVP} was globally well-posed for the initial data $u_0$
in $H^s(\mathbb{R}^2)$, $s>19/21$ satisfying
$\|u_0\|_{L^2}<\sqrt3\|\varphi\|_{L^2}$, where $\varphi$ is the
ground state solution of equation \eqref{solwave}. In the present
paper, we reaffirm that this result holds, however,  we slightly
modify the proof of the local well-posedness in \cite{LP}, to
improve that announced result. More precisely, we prove the
following.

\begin{theorem}  \label{globalm}
Let $k=2$. Let $u_0\in H^s(\mathbb{R}^2)$, $s>53/63$, and assume
that $\|u_0\|_{L^2}<\sqrt3\|\varphi\|_{L^2}$,  where $\varphi$ is
the ground state solution of equation \eqref{solwave}, then
\eqref{IVP} is globally well-posed.
\end{theorem}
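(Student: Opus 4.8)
The plan is to combine a low-regularity local well-posedness statement for $k=2$ with the ``I-method'' (almost conservation law) technology, exploiting the fact that the relevant conserved quantity controls the $H^1$ norm only under the subcritical mass hypothesis $\|u_0\|_{L^2}<\sqrt 3\,\|\varphi\|_{L^2}$. First I would revisit the local theory of \cite{LP} for the modified ZK equation ($k=2$) and modify the contraction scheme — exactly as indicated in the paragraph preceding the statement — so that it yields not just local well-posedness in $H^s$, $s>3/4$, but a version adapted to the smoothed-out Fourier multiplier operator $I=I_N$ (which equals the identity on low frequencies $|\xi|\lesssim N$ and behaves like $N^{1-s}|\xi|^{s-1}$ on high frequencies). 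The key requirement is that the local existence time depends only on $\|Iu_0\|_{H^1}$, and that the solution map satisfies the a priori bounds \eqref{b1}--\eqref{b4} in the $I$-modified norms; this is where the ``slightly modified'' local theory does its work, and the specific threshold $s>53/63$ will emerge from optimizing the interplay between the smoothing/Strichartz exponents available for the ZK group (Lemma \ref{proposition1}, with its lossy $1/4-\epsilon$ gain) and the multilinear correction estimate below.

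Next I would set up the standard two-ingredient argument. The first ingredient is control of the modified mass and energy: $\|Iu(t)\|_{L^2}=\|Iu(0)\|_{L^2}$ is not exactly conserved, but $\|Iu\|_{L^2}$ is controlled since $I$ is the identity on low frequencies and $L^2$ is the scaling-subcritical conserved norm here — in fact for the $L^2$ piece one simply uses $\|u(t)\|_{L^2}=\|u_0\|_{L^2}$ directly, and the subcriticality hypothesis $\|u_0\|_{L^2}<\sqrt 3\,\|\varphi\|_{L^2}$ is exactly what makes the Gagliardo--Nirenberg / Weinstein-type inequality (the same one invoked for Theorem \ref{globaltheorem} and in \cite{LP}) give a coercive lower bound: $I_2(u)\geq c\,\|\nabla u\|_{L^2}^2$ for some $c>0$ depending on the mass deficit. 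The second ingredient is the almost conservation law: one shows
\begin{equation*}
\bigl| I_2(Iu(T)) - I_2(Iu(0)) \bigr| \leq C N^{-\alpha}\, \bigl(\text{norms of } Iu \text{ on } [0,T]\bigr)^{\beta}
\end{equation*}
for some $\alpha>0$, by writing out $\frac{d}{dt}I_2(Iu)$, using the equation, and estimating the resulting quint-linear (for $k=2$ the nonlinearity is $u^2u_x$, so energy increments are quadrilinear and higher) commutator terms via the modified local-theory bounds and Fourier-support (``frequency interaction'') analysis. Iterating over $O(T N^{\alpha})$ time steps of uniform length, the energy stays bounded up to any fixed time $T$ provided $N$ is chosen large depending on $T$, which upgrades the local solution to a global one in $H^s$; the precise $s>53/63$ is dictated by requiring $\alpha>0$ to survive after one also pays the $1/4-\epsilon$ losses in the Strichartz step.

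The main obstacle, as the authors themselves flag, is the absence of a Leibniz rule for fractional derivatives in the mixed-norm spaces $L^q_x L^p_y L^r_T$ that the ZK local theory is built on: the commutator $[I,\,\text{nonlinearity}]$ estimates that are routine for 1D gKdV must here be carried out by hand, distributing the operator $I$ across a product of three factors living in anisotropic spaces, and tracking which factor carries the high frequency. A secondary difficulty is that the smoothing/Strichartz gain for the ZK group is only $1/4-\epsilon$ rather than the sharp $1/4$, so every estimate is slightly lossy and the bookkeeping must be tight enough that the net power of $N$ is still negative. Once these multilinear estimates are in place, the rest — choosing $N=N(T)$, summing the geometric number of increments, and passing from the a priori $H^s$ bound to actual global existence, uniqueness, and continuous dependence via the local theory — is standard and I would treat it briefly.
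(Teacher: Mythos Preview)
Your plan is the $I$-method of Colliander--Keel--Staffilani--Takaoka--Tao, but the paper does \emph{not} use it; in fact the authors remark explicitly (just after the statement of Theorem \ref{globalm}) that since the Fourier restriction/$X^{s,b}$ framework does not seem to apply to ZK, ``it is not clear that the I-method \ldots\ work[s] either.'' Instead they use the high--low frequency decomposition of Bourgain as adapted by Fonseca--Linares--Ponce \cite{FLP,FLP1}: split $u_0=v_0+w_0$ at frequency $N$, evolve $v_0$ by the full nonlinear equation in $H^1$ (the mass hypothesis and the sharp Gagliardo--Nirenberg inequality keep $\|v(t)\|_{H^1}\lesssim N^{1-s}$), evolve $w_0$ by the difference equation, and show that the nonlinear Duhamel piece $z(T)=w(T)-U(T)w_0$ actually lands in $H^1$ with $\|z(T)\|_{H^1}\lesssim N^{(3-5s)/2}$. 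One then re-initializes with $v(T)+z(T)$ as the new smooth part and $U(T)w_0$ as the new rough part. The threshold $s>53/63$ comes from the explicit arithmetic $\widetilde T\, N^{2(1-s)/\gamma}\, N^{(3-5s)/2}\lesssim N^{1-s}$ with $\gamma=5/12-$ (this $\gamma$ being the power of $T$ in the modified local theory), not from any commutator decay rate.

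The gap in your plan is concrete: the almost-conservation law $|I_2(Iu(T))-I_2(Iu(0))|\leq C N^{-\alpha}(\cdots)$ would require multilinear commutator bounds for $I$ against the cubic nonlinearity in the anisotropic spaces $L^p_x L^q_y L^r_T$ that carry the ZK local theory, and you acknowledge but do not resolve this --- nor do you indicate what $\alpha$ you would obtain or how it would combine with the lossy Strichartz exponents to produce exactly $53/63$. The paper's route sidesteps the issue entirely: the $H^1$ smoothing of $z$ is obtained by applying the dual local-smoothing estimate \eqref{dseffect} directly to $\partial_x z$ and $\partial_y z$, which lives in the physical-space framework and needs no fractional Leibniz rule in mixed norms.
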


The method we use to prove Theorem \ref{globalm} is that one
developed in \cite{FLP} and \cite{FLP1}, which combines the
smoothing effects for the solution of the linear problem with the
iteration process introduced by Bourgain \cite{Bo1}. Since we are in
the critical case, as in \cite{FLP1}, controlling the $L^2$-norm of
the initial data could bring some difficult. Nevertheless, with a
suitable decomposition of the initial data into low and high
frequencies, we are able to handle this.

Let us highlight what enables us to improve the global result
announced in \cite{LP}. The reason is quite simple. In \cite{LP}, to
apply the contraction principle, we get a factor of $T^{2/3}$ in
front of the estimates for the nonlinear terms. Here, modifying a
little bit the functional spaces, we get a factor of $T^{5/12-}$ (see
proof of Theorem \ref{theorem3}), this in turn, is relevant in the
method described in \cite{FLP}, \cite{FLP1}.

As we have pointed out in \cite{LP}, the Fourier restriction method
does not seem to work to proving a local well-posedness result for
the generalized ZK equation. So, it is not clear that the I-method,
introduced by Colliander \textit{et al.} \cite{CKSTT},  work either
to establish a better global well-posedness result.

The paper is organized as follows. In Section \ref{linear}, we state
the results concerned with the linear problem associated with
\eqref{IVP}. In Section \ref{local}, we deal with the case $k\geq3$.
We show our local (and global) well-posedness result as well as the
ill-posedness one. Finally, in Section \ref{global} we establish the
global well-posedness for $k=2$ announced in  Theorem \ref{globalm}.

 \vskip.3cm

\noindent{\bf Notation.} The symbol $a\pm$ means that there exists an
$\varepsilon>0$, small enough, such that $a\pm=a\pm\varepsilon$. For
$\alpha \in \mathbb{C}$, the operators $D^\alpha_x$ and $D^\alpha_y$
are defined via Fourier transform by $\widehat{D^\alpha_x f}(\xi,
\eta)=|\xi|^\alpha\widehat{f}(\xi,\eta)$ and $\widehat{D^\alpha_y
f}(\xi, \eta)=|\eta|^\alpha\widehat{f}(\xi,\eta)$. The mixed
space-time norm is defined by (for $1\leq p,q,r<\infty$)
$$
\|f\|_{L^p_xL^q_yL^r_T}= \left( \int_{-\infty}^{+\infty} \left(
\int_{-\infty}^{+\infty} \left( \int_0^T |f(x,y,t)|^r dt
\right)^{q/r} dy \right)^{p/q} dx \right)^{1/p}.
$$
with obvious modifications if either $p=\infty$, $q=\infty$ or
$r=\infty$.

\section{Preliminary results} \label{linear}

In this section, we recall some results concerning
the linear IVP associated to the gZK equation, which will be useful
throughout the paper.

Consider the linear IVP
\begin{equation}\label{a1}
     \left\{
\begin{array}{lll}
{\displaystyle u_t+\partial_x \Delta u =  0,}  \qquad (x,t) \in \mathbb{R}^2, \,\,\,\, t \in \mathbb{R}. \\
{\displaystyle  u(x,y,0)=u_0(x,y)},
\end{array}
\right.
\end{equation}
The solution of \eqref{a1} is given by the unitary group
$\{U(t)\}_{t=-\infty}^\infty$ such that
\begin{equation}\label{a2}
u(t)=U(t)u_0(x,y)= \int_{\mathbb{R}^2}
e^{i(t(\xi^3+\xi\eta^2)+x\xi+y\eta)}\widehat{u}_0(\xi,\eta) d\xi
d\eta.\\
\end{equation}

We begin by remembering the smoothing effect of Kato type, and the
Strichartz-type estimates.

\begin{lemma}{\bf(Smoothing effect)}  \label{lemma1}
 Let $u_0 \in L^2(\mathbb{R}^2)$. Then,
  \begin{equation}  \label{seffect}
    \|\partial_x U(t)u_0\|_{L^\infty_xL^2_{yT}} \leq c
    \|u_0\|_{L^2_{xy}}
  \end{equation}
  and
\begin{equation}  \label{dseffect}
    \|\partial_x \int_0^tU(-t')f(\cdot,\cdot,t')dt'\|_{L^2_{xy}} \leq c
    \|f\|_{L^1_xL^2_{yT}}.
  \end{equation}
  Moreover, the same still hold if we replace $\partial_x$ with
  $\partial_y$.
\end{lemma}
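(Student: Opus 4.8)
The plan is to establish the homogeneous smoothing estimate \eqref{seffect} directly by Plancherel's theorem, and then to deduce the dual (Duhamel) estimate \eqref{dseffect} from it by a duality argument. For \eqref{seffect}, I first note that since $[0,T]\subset\mathbb{R}$, it suffices to bound the $L^2$ norm in $t$ taken over all of $\mathbb{R}$, which dominates the $L^2_{yT}$ norm; this lets me use the full-line Plancherel identity in the time variable.

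Fixing $x$, I would write, from \eqref{a2},
$$\partial_x U(t)u_0(x,y)=\int_{\mathbb{R}^2} i\xi\, e^{i(t(\xi^3+\xi\eta^2)+x\xi+y\eta)}\widehat{u}_0(\xi,\eta)\,d\xi\,d\eta,$$
and apply Plancherel in $y$ to reduce the $L^2_{y}$ norm (for fixed $x,t$) to an integral in $\eta$ of $|g(x,\eta,t)|^2$, where $g(x,\eta,t)=\int_{\mathbb{R}} i\xi\,e^{it(\xi^3+\xi\eta^2)}e^{ix\xi}\widehat{u}_0(\xi,\eta)\,d\xi$. For each fixed $\eta$ I would then perform the change of variables $\tau=\xi^3+\xi\eta^2$; since $\frac{d\tau}{d\xi}=3\xi^2+\eta^2>0$, this is an increasing diffeomorphism of $\mathbb{R}$, and $g(x,\eta,\cdot)$ becomes (a constant times) the inverse Fourier transform in $t$ of $\tau\mapsto i\xi\, e^{ix\xi}\widehat{u}_0(\xi,\eta)/(3\xi^2+\eta^2)$. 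Plancherel in $t$ then gives
$$\int_{\mathbb{R}}|g(x,\eta,t)|^2\,dt=c\int_{\mathbb{R}}\frac{\xi^2}{3\xi^2+\eta^2}\,|\widehat{u}_0(\xi,\eta)|^2\,d\xi\leq\frac{c}{3}\int_{\mathbb{R}}|\widehat{u}_0(\xi,\eta)|^2\,d\xi,$$
where the crucial point is the uniform bound $\xi^2/(3\xi^2+\eta^2)\leq 1/3$. Integrating in $\eta$ yields a bound by $c\|u_0\|_{L^2_{xy}}^2$ that is independent of $x$, and taking the supremum in $x$ proves \eqref{seffect}. For the $\partial_y$ version the numerator $i\xi$ is replaced by $i\eta$, and the same argument applies using $\eta^2/(3\xi^2+\eta^2)\leq 1$.

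To obtain \eqref{dseffect} I would argue by duality. Estimate \eqref{seffect} says that the operator $\mathcal{A}u_0=\partial_x U(\cdot)u_0$ is bounded from $L^2_{xy}$ into $L^\infty_xL^2_{yT}$. Using $U(t)^*=U(-t)$, $\partial_x^*=-\partial_x$, and the commutativity of $U(-t)$ with $\partial_x$, a direct pairing computation identifies the adjoint as $\mathcal{A}^* f=-\partial_x\int_0^T U(-t')f(\cdot,\cdot,t')\,dt'$, acting from the predual $L^1_xL^2_{yT}$ of $L^\infty_xL^2_{yT}$ into $L^2_{xy}$. Boundedness of $\mathcal{A}^*$ then gives $\|\partial_x\int_0^T U(-t')f\,dt'\|_{L^2_{xy}}\leq c\|f\|_{L^1_xL^2_{yT}}$. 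To recover the retarded form with upper limit $t$, I would replace $f$ by $f\chi_{[0,t]}$ and note that $\|f\chi_{[0,t]}\|_{L^1_xL^2_{yT}}\leq\|f\|_{L^1_xL^2_{yT}}$, which delivers \eqref{dseffect} uniformly for $t\in[0,T]$ (and again the $\partial_y$ analogue is identical).

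The main obstacle I anticipate is the careful justification of the change of variables $\tau=\xi^3+\xi\eta^2$ together with the two applications of Plancherel: one must verify that $\xi\mapsto\xi^3+\xi\eta^2$ is a genuine bijection of $\mathbb{R}$ (clear from positivity of the derivative, with the degenerate point $\eta=0$, $\xi=0$ lying in a null set) and that the resulting integrand lies in $L^2_\tau$ so that Plancherel applies. A secondary technical point is the identification of $L^1_xL^2_{yT}$ as the predual in the duality step for the mixed-norm space; both are routine but should be stated precisely.
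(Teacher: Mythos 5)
Your proof is correct. The paper itself does not prove \eqref{seffect} but simply cites Faminskii's Theorem 2.2 and remarks that \eqref{dseffect} is the dual statement; your argument supplies exactly the standard Kenig--Ponce--Vega local smoothing proof behind that citation (Plancherel in $y$ and $t$ after the change of variables $\tau=\xi^3+\xi\eta^2$, with the uniform bound $\xi^2/(3\xi^2+\eta^2)\le 1/3$), and your duality step for \eqref{dseffect}, including the truncation $f\chi_{[0,t]}$ to pass from the upper limit $T$ to $t$, is the intended one.
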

\begin{proof}
See Faminskii \cite[Theorem 2.2]{Fa} for the proof of
\eqref{seffect}. The inequality \eqref{dseffect} is just the dual
version of \eqref{seffect}.\\
\end{proof}

\begin{proposition}{\bf({Strichartz-type estimates})}  \label{proposition1}
Let $0\leq \varepsilon <1/2$ and $0\leq \theta \leq 1$. Then, the
group $\{U(t)\}_{t=-\infty}^\infty$ satisfies
\begin{equation}\label{a3}
    \|D^{\theta \varepsilon/2}_x U(t)f \|_{L^q_tL^p_{xy}} \leq c
    \|f\|_{L^2_{xy}},
\end{equation}
where $p=\frac{2}{1-\theta}$ and
$\frac{2}{q}=\frac{\theta(2+\varepsilon)}{3}$.
\end{proposition}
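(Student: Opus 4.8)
The plan is to obtain the whole two-parameter family of estimates by interpolating between the two endpoints $\theta=0$ and $\theta=1$, the only nontrivial ingredient being a pointwise-in-time decay estimate for the kernel of $D^\varepsilon_xU(t)$. For $\theta=0$ one has $p=2$, $q=\infty$, and the estimate is just the $L^2$-conservation $\|U(t)f\|_{L^\infty_tL^2_{xy}}=\|f\|_{L^2_{xy}}$, which is immediate from Plancherel since the Fourier symbol of $U(t)$ has modulus one. For $\theta=1$ one has $p=\infty$, $q=6/(2+\varepsilon)$, and I would prove $\|D^{\varepsilon/2}_xU(t)f\|_{L^q_tL^\infty_{xy}}\leq c\|f\|_{L^2_{xy}}$ by the usual $TT^*$ argument (testing against $L^{q'}_tL^1_{xy}$): writing $Tf(t)=D^{\varepsilon/2}_xU(t)f$, this is equivalent to the boundedness of $TT^*G(t)=\int_{\mathbb{R}}D^{\varepsilon}_xU(t-t')G(t')\,dt'$ from $L^{q'}_tL^1_{xy}$ into $L^q_tL^\infty_{xy}$, and the latter follows from the dispersive estimate
\begin{equation}\label{disp}
\|D^{\varepsilon}_xU(t)g\|_{L^\infty_{xy}}\leq c\,|t|^{-(2+\varepsilon)/3}\|g\|_{L^1_{xy}}
\end{equation}
combined with the Hardy--Littlewood--Sobolev inequality in the time variable, noting that the exponent $(2+\varepsilon)/3$ equals $2/q$ and lies in $(0,1)$ for $0\le\varepsilon<1/2$, which is the admissible HLS range. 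Once both endpoints are known, the cases $0<\theta<1$ follow by complex interpolation (Stein's theorem) applied to the analytic family $z\mapsto D^{z\varepsilon/2}_xU(t)$ on $0\le\Re z\le1$, which is admissible because $D^{i\tau}_x$ acts as an isometry on $L^2_{xy}$; interpolating $U(t):L^2_{xy}\to L^\infty_tL^2_{xy}$ with $D^{\varepsilon/2}_xU(t):L^2_{xy}\to L^{6/(2+\varepsilon)}_tL^\infty_{xy}$ at height $\theta$ produces exactly $\tfrac1p=\tfrac{1-\theta}{2}$, $\tfrac1q=\tfrac{\theta(2+\varepsilon)}{6}$, and the derivative $\tfrac{\theta\varepsilon}{2}$.

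It remains to prove \eqref{disp}. Here $D^\varepsilon_xU(t)g=K_t\ast g$ with $K_t(x,y)=\int_{\mathbb{R}^2}|\xi|^\varepsilon e^{i(t(\xi^3+\xi\eta^2)+x\xi+y\eta)}\,d\xi\,d\eta$, so \eqref{disp} is equivalent to $\|K_t\|_{L^\infty_{xy}}\le c\,|t|^{-(2+\varepsilon)/3}$. Rescaling $(\xi,\eta)\mapsto|t|^{-1/3}(\xi,\eta)$ gives $K_t(x,y)=|t|^{-(2+\varepsilon)/3}\widetilde K(|t|^{-1/3}x,|t|^{-1/3}y)$ with $\widetilde K(a,b)=\int_{\mathbb{R}^2}|\xi|^\varepsilon e^{i(\xi^3+\xi\eta^2+a\xi+b\eta)}\,d\xi\,d\eta$, so it suffices to bound $\widetilde K$ uniformly in $(a,b)\in\mathbb{R}^2$. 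Performing the Fresnel integral in $\eta$, namely $\int_{\mathbb{R}}e^{i(\xi\eta^2+b\eta)}\,d\eta=\sqrt{\pi}\,|\xi|^{-1/2}e^{i\,\mathrm{sgn}(\xi)\pi/4}e^{-ib^2/(4\xi)}$, reduces the matter to the one-dimensional oscillatory integral
\begin{equation*}
\widetilde K(a,b)=\sqrt{\pi}\int_{\mathbb{R}}|\xi|^{\varepsilon-1/2}e^{i\,\mathrm{sgn}(\xi)\pi/4}e^{i\psi_{a,b}(\xi)}\,d\xi,\qquad \psi_{a,b}(\xi)=\xi^3+a\xi-\frac{b^2}{4\xi}.
\end{equation*}
On $\{|\xi|\le1\}$ the integrand is dominated by $|\xi|^{\varepsilon-1/2}$, which is integrable because $\varepsilon-1/2>-1$, giving a bound independent of $a,b$. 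On $\{|\xi|\ge1\}$ I would invoke van der Corput's lemma with the third derivative, using $\psi_{a,b}'''(\xi)=6+\tfrac{3}{2}b^2\xi^{-4}\ge6$ together with the fact that the amplitude $|\xi|^{\varepsilon-1/2}$ has sup-norm one and total variation at most one on each of the half-lines $\{\xi\ge1\}$ and $\{\xi\le-1\}$; this also yields an $(a,b)$-independent bound, and \eqref{disp} follows.

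The main obstacle is precisely this last uniform bound for $\widetilde K(a,b)$: the amplitude $|\xi|^{\varepsilon-1/2}$ is mildly singular at the origin and decays only slowly at infinity, so neither a crude absolute-value estimate nor a single integration by parts works globally, and one must stitch the two regimes together. The third-derivative van der Corput estimate does this cleanly, but it requires the amplitude to remain bounded and of bounded variation out to infinity, which is exactly the hypothesis $\varepsilon<1/2$ (and the underlying reason for the loss of slightly less than $1/4$ derivative noted in the introduction). Everything after \eqref{disp}, i.e.\ the $TT^*$/Hardy--Littlewood--Sobolev step and the Stein interpolation, is routine once one records the elementary facts that $(2+\varepsilon)/3<1$ and that $D^{i\tau}_x$ is an $L^2$-isometry.
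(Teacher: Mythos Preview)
The paper itself does not prove this proposition here; it simply cites \cite{LP}. Your outline is the standard route (dispersive estimate $\Rightarrow$ $TT^*$/HLS $\Rightarrow$ interpolation) and is essentially what one finds in \cite{LP}, so there is no methodological divergence to discuss. The kernel computation is fine: the Fresnel integral in $\eta$, the absolute-value bound on $\{|\xi|\le1\}$, and the third-derivative van der Corput estimate on $\{|\xi|\ge1\}$ (using that $|\xi|^{\varepsilon-1/2}$ has bounded variation there precisely when $\varepsilon<1/2$) together give the uniform bound on $\widetilde K$ and hence \eqref{disp}.

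There is one genuine technical gap in your interpolation step. To run Stein's theorem on the family $T_z=D_x^{z\varepsilon/2}U(t)$ you need bounds along the \emph{entire} line $\Re z=1$, i.e.\ $\|D_x^{(1+i\tau)\varepsilon/2}U(t)f\|_{L^{6/(2+\varepsilon)}_tL^\infty_{xy}}\le M_1(\tau)\|f\|_{L^2}$ with admissible growth in $\tau$. Your justification ``$D_x^{i\tau}$ is an $L^2$-isometry'' only handles $\Re z=0$; on $\Re z=1$ the extra factor $D_x^{i\tau\varepsilon/2}$ must act on $L^\infty_{xy}$, where purely imaginary Riesz-type multipliers are \emph{not} bounded. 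The standard repair is to absorb the imaginary exponent into the kernel estimate itself: redo the oscillatory-integral bound for $|\xi|^{\varepsilon+i\sigma}$ in place of $|\xi|^{\varepsilon}$, noting that $\big||\xi|^{\varepsilon+i\sigma}\big|=|\xi|^{\varepsilon}$ while the total variation of the amplitude on $\{|\xi|\ge1\}$ picks up only a factor $c(1+|\sigma|)$. This yields $\|D_x^{\varepsilon+i\sigma}U(t)g\|_{L^\infty_{xy}}\le c(1+|\sigma|)\,|t|^{-(2+\varepsilon)/3}\|g\|_{L^1_{xy}}$, whence $M_1(\tau)$ grows polynomially and Stein interpolation applies. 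Alternatively, interpolate at the level of the dispersive estimate (between $L^2\to L^2$ and $L^1\to L^\infty$) \emph{before} running $TT^*$; this sidesteps the $L^\infty$ endpoint in the interpolation altogether and is the cleaner route.
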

\begin{proof}
See Linares and Pastor \cite[Proposition 2.4]{LP}.\\
\end{proof}

The next lemmas are useful to recover the ``loss of derivative''
present in the nonlinear term of the gZK equation.

\begin{lemma}   \label{lemma2}
Let $0\leq \varepsilon <1/2$. Then, the group
$\{U(t)\}_{t=-\infty}^\infty$ satisfies
\begin{equation}   \label{a4}
\|U(t)f \|_{L^p_TL^\infty_{xy}} \leq c T^{\gamma_1}
\|D^{-\varepsilon/2}_x f \|_{L^2_{xy}},
\end{equation}
where $1\leq p \leq \frac{6}{2+\varepsilon}$ and
$\gamma_1=\frac{1}{p}-\frac{2+\varepsilon}{6}$. In particular, if
$0<T\leq1$, then
\begin{equation}   \label{a5}
\|U(t)f \|_{L^{12/5}_TL^\infty_{xy}} \leq c \|D^{-\varepsilon/2}_x f
\|_{L^2_{xy}}.
\end{equation}
\end{lemma}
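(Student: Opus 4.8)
The plan is to derive \eqref{a4} by interpolating between two endpoint cases and then using a standard $TT^*$/Sobolev embedding argument to pass from the $L^q_tL^p_{xy}$ Strichartz bound of Proposition \ref{proposition1} to an $L^\infty_{xy}$ bound in space. First I would fix $\varepsilon\in[0,1/2)$ and look at the Strichartz estimate \eqref{a3} with $\theta$ chosen so that $p=\frac{2}{1-\theta}$ is \emph{large}; letting $\theta\to1$ we get $p\to\infty$, $\frac2q\to\frac{2+\varepsilon}{3}$, i.e.\ $q=\frac{6}{2+\varepsilon}$, and a gain of $\varepsilon/2$ derivatives in $x$. Replacing $f$ by $D^{-\varepsilon/2}_xf$ in \eqref{a3} turns the left side into $\|U(t)f\|_{L^q_tL^p_{xy}}$. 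To upgrade $L^p_{xy}$ to $L^\infty_{xy}$ one spends a bit of the $x$-smoothing together with a bit of $y$-smoothing (the group is symmetric in $x\leftrightarrow y$ only up to the $\xi^3$ term, but the $L^2$-based Strichartz estimate \eqref{a3} can be run with $D_y$ in place of $D_x$ by the same oscillatory-integral computation, since only the Hessian in the phase matters). Thus, at the cost of an arbitrarily small loss, one obtains
\[
\|U(t)f\|_{L^{6/(2+\varepsilon)}_tL^\infty_{xy}}\le c\,\|D^{-\varepsilon/2}_xf\|_{L^2_{xy}},
\]
which is precisely \eqref{a4} with $p=\frac{6}{2+\varepsilon}$ and $\gamma_1=0$ on the \emph{whole line} in $t$.

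Next I would introduce the time-truncation. On a finite interval $[0,T]$ Hölder's inequality in $t$ gives, for any $1\le p\le \frac{6}{2+\varepsilon}$,
\[
\|U(t)f\|_{L^p_TL^\infty_{xy}}\le T^{\frac1p-\frac{2+\varepsilon}{6}}\,\|U(t)f\|_{L^{6/(2+\varepsilon)}_TL^\infty_{xy}}\le c\,T^{\gamma_1}\,\|D^{-\varepsilon/2}_xf\|_{L^2_{xy}},
\]
with $\gamma_1=\frac1p-\frac{2+\varepsilon}{6}\ge0$, which is \eqref{a4}. The special case \eqref{a5} follows by taking $p=12/5$: then $\gamma_1=\frac{5}{12}-\frac{2+\varepsilon}{6}=\frac{1-2\varepsilon}{12}\ge0$ for $\varepsilon\le1/2$, so $T^{\gamma_1}\le1$ whenever $0<T\le1$, giving the stated constant-in-$T$ bound.

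The main obstacle is the step that trades derivatives for the $L^p_{xy}\to L^\infty_{xy}$ embedding: $L^\infty$ in two variables is not a Sobolev endpoint, so one cannot simply write $\|g\|_{L^\infty_{xy}}\lesssim\|D^{1+}_x D^{1+}_y g\|_{L^2_{xy}}$ and stay within the budget $\varepsilon/2<1/4$. The correct route, I expect, is to avoid ever landing in $L^\infty$ by brute force and instead interpolate Proposition \ref{proposition1} in its two variables simultaneously — i.e.\ run \eqref{a3} once with $D_x$ and once with $D_y$, take a geometric mean, and choose the interpolation parameter $\theta$ slightly below $1$ so that $p<\infty$ is as large as needed while the combined smoothing exponent stays $\le\varepsilon/2$; a final Sobolev embedding $W^{\delta,p}\hookrightarrow L^\infty$ in the plane with $\delta p>2$, $p$ large, $\delta$ small, closes the gap with only an $\varepsilon$-loss. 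This is exactly the kind of mixed-norm subtlety flagged in the introduction as the source of the "$1/4-\epsilon$ instead of $1/4$" phenomenon, so I would expect the bookkeeping of the small losses — making sure the $\gamma>0$ freedom in Theorems \ref{theorem1}–\ref{theorem2} absorbs them — to be the delicate part, while the $TT^*$ and Hölder-in-time steps are routine.
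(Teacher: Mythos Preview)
Your H\"older-in-time reduction and the computation of $\gamma_1$ for the special case $p=12/5$ are correct and match the paper exactly. However, you have manufactured an obstacle that does not exist. Proposition~\ref{proposition1} is stated for $0\le\theta\le1$, \emph{endpoint included}: setting $\theta=1$ gives $p=\tfrac{2}{1-\theta}=\infty$ directly, so \eqref{a3} already reads
\[
\|D^{\varepsilon/2}_x U(t)f\|_{L^{6/(2+\varepsilon)}_t L^\infty_{xy}}\le c\,\|f\|_{L^2_{xy}},
\]
and replacing $f$ by $D^{-\varepsilon/2}_x f$ yields the endpoint case $p=\tfrac{6}{2+\varepsilon}$, $\gamma_1=0$ of \eqref{a4} on the whole time line. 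The paper's proof is then precisely your second paragraph: H\"older in $t$ from $L^{6/(2+\varepsilon)}_T$ down to $L^p_T$.

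Consequently the entire apparatus you propose --- a Sobolev embedding $W^{\delta,p}\hookrightarrow L^\infty$, a conjectural $D_y$-version of the Strichartz estimate, geometric-mean interpolation, and bookkeeping of $\varepsilon$-losses --- is unnecessary here. (That kind of argument \emph{is} what drives Lemma~\ref{lemma5}, where no smoothing is available and one genuinely has to pay derivatives to reach $L^\infty_{xy}$; but in the present lemma the $L^\infty_{xy}$ endpoint is already part of the Strichartz family.) Your worry about whether the asymmetric phase $\xi^3+\xi\eta^2$ permits a $D_y$-gain is likewise moot, since no such estimate is invoked.
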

\begin{proof}
By using Holder's inequality (in $t$), we get
$$
\|U(t)f \|_{L^p_TL^\infty_{xy}} \leq c T^{\gamma_1}\|U(t)f
\|_{L^q_TL^\infty_{xy}},
$$
where $\frac{1}{p}=\gamma_1+\frac{1}{q}$. Thus, taking $\theta=1$
and $q=6/(2+\varepsilon)$ in Proposition \ref{proposition1},
the estimate \eqref{a4} then follows.\\
\end{proof}

\begin{lemma}   \label{lemma4}
Let $\delta>0$. Then,
$$
\|f\|_{L^\infty_{xy}}\leq c \left\{ \|f\|_{L^{p_\delta}_{xy}} +
\|D^\delta_xf\|_{L^{p_\delta}_{xy}}+\|D^\delta_yf\|_{L^{p_\delta}_{xy}}\right\},
$$
where $p_\delta>2/\delta$. In particular,
$p_\delta\rightarrow\infty$ as $\delta\rightarrow0$.
\end{lemma}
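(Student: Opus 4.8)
The statement is an anisotropic Sobolev embedding, and the plan is to reduce it to the classical isotropic one. Recall that $W^{\delta,p}(\mathbb{R}^2)\hookrightarrow L^\infty(\mathbb{R}^2)$ whenever $\delta p>2$, i.e. $p>2/\delta$ --- precisely the hypothesis imposed on $p_\delta$ (so the threshold $p_\delta>2/\delta$, with $p_\delta\to\infty$ as $\delta\to0$, is just the requirement $\delta p>n=2$). Hence, writing $\langle D\rangle^\delta=(I-\Delta)^{\delta/2}$, it suffices to prove the norm comparison $\|\langle D\rangle^\delta f\|_{L^p_{xy}}\le c\,(\|f\|_{L^p_{xy}}+\|D^\delta_x f\|_{L^p_{xy}}+\|D^\delta_y f\|_{L^p_{xy}})$ for $1<p<\infty$, and then apply the embedding with $p=p_\delta$.

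For this comparison I would first dominate at the level of symbols: from $1+\xi^2+\eta^2\le(1+\xi^2)+(1+\eta^2)\le 2\max(1+\xi^2,1+\eta^2)$ one gets $\langle(\xi,\eta)\rangle^\delta\le 2^{\delta/2}(\langle\xi\rangle^\delta+\langle\eta\rangle^\delta)$, with $\langle\xi\rangle=(1+\xi^2)^{1/2}$. Therefore $\langle D\rangle^\delta=T\circ(\langle D_x\rangle^\delta+\langle D_y\rangle^\delta)$, where $T$ has the smooth, order-zero symbol $\langle(\xi,\eta)\rangle^\delta/(\langle\xi\rangle^\delta+\langle\eta\rangle^\delta)$, a standard Fourier multiplier bounded on $L^p(\mathbb{R}^2)$ for $1<p<\infty$. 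This reduces matters to bounding $\|\langle D_x\rangle^\delta f\|_{L^p}+\|\langle D_y\rangle^\delta f\|_{L^p}$. For the first term, write $\langle D_x\rangle^\delta=r(D_x)\circ(I+D^\delta_x)$ with $r(\xi)=(1+\xi^2)^{\delta/2}/(1+|\xi|^\delta)$; since $r(D_x)$ is bounded on $L^p(\mathbb{R})$ it is bounded on $L^p(\mathbb{R}^2)$ by Fubini (it acts only in $x$), whence $\|\langle D_x\rangle^\delta f\|_{L^p}\le c\,\|(I+D^\delta_x)f\|_{L^p}\le c\,(\|f\|_{L^p}+\|D^\delta_x f\|_{L^p})$, and symmetrically in $y$; combining gives the desired comparison.

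The one delicate point --- and the step I expect to cost the most work --- is the boundedness of $r(D_x)$, since $|\xi|^\delta$ (hence $r$) is not smooth at $\xi=0$ when $\delta$ is not an even integer. But $r$ is bounded above and below, and near $0$ one has $r(\xi)=1-|\xi|^\delta+\cdots$, so $r'(\xi)=O(|\xi|^{\delta-1})$ and $\xi\,r'(\xi)=O(|\xi|^\delta)\to0$, while away from $0$ the symbol is smooth with decaying derivative; thus $\sup_\xi|\xi\,r'(\xi)|<\infty$ and the one-dimensional Mikhlin--H\"ormander criterion yields $L^p(\mathbb{R})$-boundedness of $r(D_x)$. (Alternatively one works directly with the two-dimensional multiplier $\langle(\xi,\eta)\rangle^\delta/(1+|\xi|^\delta+|\eta|^\delta)$ via the Marcinkiewicz conditions, the singular contributions along $\xi=0$, $\eta=0$ being again $O(|\xi|^\delta)$, $O(|\eta|^\delta)$; or one splits $\mathbb{R}^2$ with a partition of unity subordinate to $\{|\xi|\ge|\eta|\}$ and $\{|\eta|\ge|\xi|\}$ so only one singular factor is active on each piece.) A purely elementary route, bypassing multipliers altogether, is to iterate the one-dimensional embedding $W^{\delta,p}(\mathbb{R})\hookrightarrow L^\infty(\mathbb{R})$ first in $x$, then in $y$, using $\big|\,\|g(\cdot,y)\|_{L^p_x}-\|g(\cdot,y')\|_{L^p_x}\,\big|\le\|g(\cdot,y)-g(\cdot,y')\|_{L^p_x}$ and the Gagliardo difference seminorm for $D^\delta_y$ (for $0<\delta<1$; the case $\delta\ge1$ reduces to it) to pass the $y$-derivative through the inner $L^p_x$-norm. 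Once any of these provides the $L^p$-multiplier bound, the lemma follows at once.
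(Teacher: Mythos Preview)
Your argument is correct. The reduction to the isotropic embedding $W^{\delta,p}(\mathbb{R}^2)\hookrightarrow L^\infty(\mathbb{R}^2)$ for $\delta p>2$ via the multiplier comparison $\langle(\xi,\eta)\rangle^\delta\lesssim \langle\xi\rangle^\delta+\langle\eta\rangle^\delta$ is clean, and your check that the one-dimensional multiplier $r(\xi)=(1+\xi^2)^{\delta/2}/(1+|\xi|^\delta)$ satisfies the Mikhlin condition (the only nonsmooth point being $\xi=0$, where $\xi r'(\xi)=O(|\xi|^\delta)$) is the right thing to verify.

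There is, however, nothing to compare against: the paper does not prove this lemma at all but simply cites Kenig and Ziesler \cite[Lemma~3.4]{KZ1}. So your write-up supplies strictly more than the paper does here. The approach in Kenig--Ziesler is in the same spirit as yours (multiplier estimates reducing the anisotropic norm to the isotropic Bessel-potential norm), so you are not doing anything exotic; if anything, your alternative of iterating the one-dimensional embedding in $x$ then $y$ is a slightly more elementary route that avoids two-variable multiplier theory entirely, at the cost of handling the fractional $y$-derivative of an $L^p_x$-norm via the Gagliardo seminorm.
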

\begin{proof}
See Kenig and Ziesler \cite[Lemma 3.4]{KZ1}.\\
\end{proof}

\begin{lemma}   \label{lemma5}
Let $0<\delta<1$. Assume $1-\delta<\theta<1$ and $2\leq r \leq
3/\theta$. Then,
$$
\|U(t)f\|_{L^r_TL^\infty_{xy}}\leq c T^{\gamma_2} \left\{
\|f\|_{L^{2}_{xy}} +
\|D^\delta_xf\|_{L^{2}_{xy}}+\|D^\delta_yf\|_{L^{2}_{xy}}\right\},
$$
for some $\gamma_2=\frac{1}{r}-\frac{\theta}{3}\geq0$.
\end{lemma}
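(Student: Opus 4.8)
The plan is to combine the anisotropic Sobolev embedding of Lemma~\ref{lemma4}, the Strichartz estimate of Proposition~\ref{proposition1} in its endpoint case $\varepsilon=0$, and H\"older's inequality in the time variable. The crucial observation is that the hypothesis $1-\delta<\theta$ guarantees that the exponent $p_\delta:=\frac{2}{1-\theta}$ satisfies $p_\delta>\frac{2}{\delta}$, so it is admissible in Lemma~\ref{lemma4}.

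First I would apply Lemma~\ref{lemma4}, for each fixed $t$, to the function $(x,y)\mapsto U(t)f(x,y)$ with $p_\delta=\frac{2}{1-\theta}$, which gives
\begin{equation*}
\|U(t)f\|_{L^\infty_{xy}}\leq c\left\{\|U(t)f\|_{L^{p_\delta}_{xy}}+\|D^\delta_xU(t)f\|_{L^{p_\delta}_{xy}}+\|D^\delta_yU(t)f\|_{L^{p_\delta}_{xy}}\right\}.
\end{equation*}
Taking the $L^r$-norm in $t$ over $[0,T]$ of both sides (the left-hand side becoming $\|U(t)f\|_{L^r_TL^\infty_{xy}}$) and using the triangle inequality for $\|\cdot\|_{L^r_T}$, which holds since $r\geq1$, reduces matters to estimating $\|U(t)g\|_{L^r_TL^{p_\delta}_{xy}}$ for each $g\in\{f,\,D^\delta_xf,\,D^\delta_yf\}$; here one uses that $D^\delta_x$ and $D^\delta_y$, being Fourier multipliers, commute with $U(t)$.

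Next, since $r\leq 3/\theta$, H\"older's inequality in $t$ on the interval $[0,T]$ yields
\begin{equation*}
\|U(t)g\|_{L^r_TL^{p_\delta}_{xy}}\leq T^{\,1/r-\theta/3}\,\|U(t)g\|_{L^{3/\theta}_TL^{p_\delta}_{xy}}\leq T^{\gamma_2}\,\|U(t)g\|_{L^{3/\theta}_tL^{p_\delta}_{xy}},
\end{equation*}
with $\gamma_2=\frac{1}{r}-\frac{\theta}{3}\geq0$. Finally I would invoke Proposition~\ref{proposition1} with $\varepsilon=0$ and the given $\theta$: then $D^{\theta\varepsilon/2}_x$ is the identity, $p=\frac{2}{1-\theta}=p_\delta$, and the relation $\frac{2}{q}=\frac{2\theta}{3}$ forces $q=3/\theta$, so that $\|U(t)g\|_{L^{3/\theta}_tL^{p_\delta}_{xy}}\leq c\|g\|_{L^2_{xy}}$. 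Applying this with the three choices of $g$ and summing gives the asserted estimate.

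I do not anticipate a genuine obstacle: the argument is a soft combination of already-established linear estimates. The only points requiring care are (i) checking $p_\delta=\frac{2}{1-\theta}>2/\delta$, which is exactly the hypothesis $\theta>1-\delta$; and (ii) checking that the exponent pair $(q,p)=(3/\theta,\,\frac{2}{1-\theta})$ is admissible in Proposition~\ref{proposition1}, i.e.\ that $\theta\in(0,1)$, which follows from $1-\delta<\theta<1$ and also ensures $q=3/\theta>3\geq r$, so that the power of $T$ produced by H\"older's inequality is nonnegative. The hypothesis $r\geq2$ plays no essential role beyond guaranteeing that $\|\cdot\|_{L^r_T}$ is a genuine norm. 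To be scrupulous one would first prove the estimate for Schwartz $f$ and pass to the general case by density.
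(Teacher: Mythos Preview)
Your proof is correct and follows essentially the same route as the paper's: apply the Sobolev-type embedding of Lemma~\ref{lemma4} with $p_\delta=\frac{2}{1-\theta}$, use H\"older in time to pass from $L^r_T$ to $L^{3/\theta}_T$, and conclude with Proposition~\ref{proposition1} at $\varepsilon=0$. The only cosmetic difference is the order in which you apply H\"older and Lemma~\ref{lemma4}; the paper does H\"older first, then the embedding, but the ingredients and the verification of the admissibility conditions ($p_\delta>2/\delta\Leftrightarrow\theta>1-\delta$, $\gamma_2\ge0\Leftrightarrow r\le3/\theta$) are identical.
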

\begin{proof}
We first note that taking $\varepsilon=0$ in Proposition
\ref{proposition1}, we obtain
\begin{equation} \label{a6}
\|U(t)f\|_{L^{3/\theta}_TL^{2/(1-\theta)}_{xy}}\leq c
\|f\|_{L^2_{xy}}.
\end{equation}
Now, applying  Holder's inequality followed by Lemma \ref{lemma4},
we deduce
\begin{equation*}
\begin{split}
\|U(t)f\|_{L^r_TL^\infty_{xy}}&\;\leq c T^{\gamma_2}
\|U(t)f\|_{L^{r'}_TL^\infty_{xy}} \\
& \;\leq c T^{\gamma_2} \left\{
\|U(t)f\|_{L^{r'}_TL^{p_\delta}_{xy}}
+\|D^\delta_xU(t)f\|_{L^{r'}_TL^{p_\delta}_{xy}}
+\|D^\delta_yU(t)f\|_{L^{r'}_TL^{p_\delta}_{xy}} \right\}.
\end{split}
\end{equation*}
If we now choose $r'=3/\theta$ and $p_\delta=2/(1-\theta)$, then an
application of \eqref{a6} yields the affirmation. Note that
$p_\delta>2/\delta$ implies $1-\delta<\theta$, and $\gamma_2\geq0$
implies $r\leq3/\theta$. This completes the proof of the lemma.\\
\end{proof}

As we commented before, Kenig, Ponce, and Vega's technique combines
the smoothing effect and Strichartz estimate with a maximal function
estimate. Here, we present the $L^2_x$ and $L^4_x$ maximal function estimates
we will use in our arguments.

\begin{proposition}{\bf(Maximal function)}   \label{proposition2}

\begin{itemize}
  \item[(i)] For any $s_1>1/4$, $r_1>1/2$ and $0< T \leq 1$, we have
\begin{equation*}
\|U(t)f\|_{L^4_xL^\infty_{yT}} \leq c \|(1+D_x)^{s_1}(1+D_y)^{r_1}f
\|_{L^2_{xy}}.
\end{equation*}
  \item[(ii)] For any $s>3/4$, we have
  \begin{equation*}
\|U(t)f\|_{L^2_xL^\infty_{yT}} \leq c(s,T) \|f \|_{H^s_{xy}}.
\end{equation*}
where $c(s,T)$ is a positive constant depending only on $T$ and $s$.
\end{itemize}
\end{proposition}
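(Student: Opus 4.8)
The plan is to treat the two estimates separately. Part (ii) is precisely the $L^2_x$ maximal function estimate established in \cite{LP}, so the essential new work is part (i); I outline it first and comment on (ii) afterwards.

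\emph{Part (i).} My idea is to freeze the $y$-frequency and thereby reduce the two-dimensional bound to a one-dimensional, KdV-type maximal function estimate. Since the symbol factors as $e^{it(\xi^3+\xi\eta^2)}=e^{it\xi^3}e^{it\eta^2\xi}$ and the second factor is merely a translation in $x$, the partial Fourier transform in $y$ of the function $(x,y)\mapsto U(t)f(x,y)$ is
\[
  G(x,\eta,t):=\int_{\mathbb R}e^{ix\xi+it(\xi^3+\xi\eta^2)}\widehat f(\xi,\eta)\,d\xi
  =\bigl[e^{it(D_x^3+\eta^2D_x)}g_\eta\bigr](x),\qquad
  g_\eta(x):=\int_{\mathbb R}e^{ix\xi}\widehat f(\xi,\eta)\,d\xi .
\]
First I would, for each fixed $x$, use the one-dimensional Sobolev embedding $H^{r_1}_y\hookrightarrow L^\infty_y$ ($r_1>1/2$) and then pull the supremum in $t$ inside the $L^2_\eta$ norm:
\[
  \sup_{(y,t)\in\mathbb R\times[0,T]}|U(t)f(x,y)|
  \le c\,\Bigl\|\langle\eta\rangle^{r_1}\sup_{0<t\le T}|G(x,\eta,t)|\Bigr\|_{L^2_\eta}.
\]
Taking the $L^4_x$ norm and exchanging $L^4_x$ with $L^2_\eta$ via Minkowski's inequality (admissible since $2\le4$) then gives
\[
  \|U(t)f\|_{L^4_xL^\infty_{yT}}
  \le c\Bigl(\int_{\mathbb R}\langle\eta\rangle^{2r_1}\bigl\|\,\sup_{0<t\le T}|G(\cdot,\eta,t)|\,\bigr\|_{L^4_x}^2\,d\eta\Bigr)^{1/2}.
\]

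The remaining ingredient is the Kenig--Ponce--Vega $L^4_xL^\infty_t$ maximal estimate for the KdV group perturbed by an arbitrary linear drift: for every $c\in\mathbb R$ and every $0<T\le1$,
\[
  \bigl\|\,\sup_{0<t\le T}|e^{it(D_x^3+cD_x)}g|\,\bigr\|_{L^4_x}\le C\,\|D_x^{1/4}g\|_{L^2_x},
\]
with constant $C$ independent of $c$ and $T$. This follows, after a Littlewood--Paley decomposition in $\xi$, from van der Corput's lemma applied to the oscillatory integral with cubic phase $x\xi+t(\xi^3+c\xi)$, whose relevant second derivative $6t\xi$ does not involve the drift $c$; the low frequencies and the time truncation contribute only harmless factors. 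Inserting this with $c=\eta^2$, using $(1+|\xi|)^{s_1}\ge|\xi|^{1/4}$ for $s_1\ge1/4$ together with $\langle\eta\rangle\sim(1+|\eta|)$, and observing that $\int_{\mathbb R}\langle\eta\rangle^{2r_1}\|D_x^{1/4}g_\eta\|_{L^2_x}^2\,d\eta$ equals, up to a constant, $\|(1+D_x)^{s_1}(1+D_y)^{r_1}f\|_{L^2_{xy}}^2$, completes the proof of (i).

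\emph{Part (ii).} The same freezing trick is not available here: a Sobolev embedding in $y$ costs $\langle D_y\rangle^{1/2+}$, which combined with the $\langle D_x\rangle^{3/4+}$ cost of the one-dimensional $L^2_x$ maximal estimate would force isotropic regularity $s>5/4$ instead of $s>3/4$. Hence part (ii) must genuinely exploit the two-dimensional oscillation; this is exactly the estimate proved in \cite{LP} by the oscillatory-integral method of Kenig--Ponce--Vega: a dyadic decomposition in $(\xi,\eta)$, linearization of $\sup_{(y,t)}$ through a $TT^*$ argument with measurable choices $y=y(x)$, $t=t(x)$, van der Corput in $\eta$ (where the phase is quadratic, hence clean) and in $\xi$ (cubic phase), and finally Schur's test on the resulting kernel. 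The hard part — and the reason the sharp exponent $3/4$ appears — is that after linearization the time variable becomes $t=t(x)$, so the $TT^*$ kernel is singular near the diagonal and that singularity has to be absorbed using the $\xi$-oscillation; by contrast, the only delicate point in (i) is checking that the one-dimensional maximal estimate is uniform in the drift $c$, which it is because the cubic phase controls everything.
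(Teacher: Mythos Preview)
Your reduction for part~(i) has a real gap. The one–dimensional estimate you invoke,
\[
\Bigl\|\sup_{0<t\le T}\bigl|e^{it(D_x^3+cD_x)}g\bigr|\Bigr\|_{L^4_x}\le C\,\|D_x^{1/4}g\|_{L^2_x}\qquad (0<T\le1),
\]
is \emph{not} uniform in the drift $c$. Take $g(x)=e^{-x^2}$ and $T=1$. Since $e^{it(D_x^3+cD_x)}g(x)=\bigl[e^{itD_x^3}g\bigr](x+ct)$, the choice $t_0=-x/c\in(0,1]$ (so $x\in[-c,0)$) yields $\bigl[e^{it_0D_x^3}g\bigr](0)$, and this stays within a fixed distance of $g(0)=1$ whenever $|t_0|=|x|/c$ is sufficiently small. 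Hence the maximal function is bounded below by a fixed positive constant on an $x$–interval of length comparable to $c$, forcing its $L^4_x$ norm to be at least of order $c^{1/4}$, while $\|D_x^{1/4}g\|_{L^2}$ is independent of $c$. Your heuristic that the second $\xi$–derivative $6t\xi$ ignores the drift is correct but insufficient: at low $\xi$–frequencies the flow is essentially the transport $g\mapsto g(\cdot+ct)$, and van~der~Corput contributes nothing there. The sentence ``the low frequencies and the time truncation contribute only harmless factors'' is exactly where the argument breaks. Feeding the true drift–dependence back through your Sobolev--Minkowski reduction, the best one obtains is the inequality with $r_1>1$ in place of $r_1>1/2$, which is too weak for the application in Corollary~\ref{corollary1}.

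For the record, the paper gives no self–contained proof here: it cites \cite[Proposition~1.5]{LP} for (i) and \cite[Theorem~2.4]{Fa} for (ii). In particular part~(ii) is Faminskii's result, not one from \cite{LP} as you state. Your diagnosis that (ii) cannot be reached by freezing $\eta$ is correct; the same phenomenon --- the need to exploit the $\eta$–oscillation in the region $|\xi|\lesssim|\eta|$ --- is what prevents the freezing trick from delivering (i) with the stated exponents as well.
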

\begin{proof}
See Linares and Pastor \cite[Proposition 1.5]{LP} for part (i), and Faminskii \cite[Theorem 2.4]{Fa} for part (ii).\\
\end{proof}

\begin{corollary}   \label{corollary1}
For any $s>3/4$ and $0< T \leq 1$, we have
\begin{equation*}
\|U(t)f\|_{L^4_xL^\infty_{yT}} \leq c \|f\|_{H^s_{xy}}.
\end{equation*}
\end{corollary}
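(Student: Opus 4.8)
The plan is to derive Corollary \ref{corollary1} directly from Proposition \ref{proposition2}(i) by absorbing the extra $x$- and $y$-smoothing into a single isotropic Sobolev norm of order $s>3/4$. Fix $s>3/4$ and choose $s_1,r_1$ with $1/4<s_1$, $1/2<r_1$, and $s_1+r_1\le s$; this is possible precisely because $3/4<s$ (take, e.g., $s_1=1/4+$ and $r_1=1/2+$ with the two $\varepsilon$'s summing to less than $s-3/4$). With these choices Proposition \ref{proposition2}(i) gives
$$
\|U(t)f\|_{L^4_xL^\infty_{yT}}\le c\,\|(1+D_x)^{s_1}(1+D_y)^{r_1}f\|_{L^2_{xy}},
$$
so it remains to bound the right-hand side by $c\,\|f\|_{H^s_{xy}}$.

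For that last step I would pass to the Fourier side: by Plancherel the quantity $\|(1+D_x)^{s_1}(1+D_y)^{r_1}f\|_{L^2_{xy}}^2$ equals $\int (1+|\xi|)^{2s_1}(1+|\eta|)^{2r_1}|\widehat f(\xi,\eta)|^2\,d\xi\,d\eta$, and the pointwise multiplier estimate
$$
(1+|\xi|)^{2s_1}(1+|\eta|)^{2r_1}\le c\,(1+|\xi|+|\eta|)^{2(s_1+r_1)}\le c\,(1+|\xi|^2+|\eta|^2)^{s_1+r_1}\le c\,(1+|\xi|^2+|\eta|^2)^{s}
$$
(using $s_1+r_1\le s$) immediately yields $\|(1+D_x)^{s_1}(1+D_y)^{r_1}f\|_{L^2_{xy}}\le c\,\|f\|_{H^s_{xy}}$. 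Combining the two displays gives the corollary, with $0<T\le1$ inherited from the hypothesis of Proposition \ref{proposition2}(i).

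The argument is essentially routine; there is no real obstacle, only the bookkeeping of making sure the exponents $s_1,r_1$ can be chosen strictly above their thresholds while still summing to at most $s$, which is exactly what the strict inequality $s>3/4$ buys. I would state this exponent choice explicitly at the start of the proof, since it is the only place the hypothesis $s>3/4$ is used.
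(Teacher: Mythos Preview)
Your proof is correct and follows the same overall strategy as the paper: apply Proposition~\ref{proposition2}(i), pass to the Fourier side via Plancherel, and bound the anisotropic multiplier $(1+|\xi|)^{2s_1}(1+|\eta|)^{2r_1}$ by the isotropic weight $(1+|\xi|^2+|\eta|^2)^{s}$.

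The only difference is in how the multiplier estimate is carried out. The paper expands the product, applies Young's inequality to the cross term $|\xi|^{2s_1}|\eta|^{2r_1}$ to get $|\xi|^{6s_1}+|\eta|^{3r_1}$, and then makes the specific choice $s_1=1/4+\rho/3$, $r_1=1/2+2\rho/3$ so that $6s_1=3r_1=2(3/4+\rho)$. Your route is shorter: the trivial bound $(1+|\xi|)^{2s_1}(1+|\eta|)^{2r_1}\le(1+|\xi|+|\eta|)^{2(s_1+r_1)}$ together with $s_1+r_1\le s$ does the job in one line, and it makes transparent that the only constraint on $(s_1,r_1)$ is that their sum stay below $s$. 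Both arguments arrive at the same threshold $s>3/4$; yours just avoids the Young step and the balancing of exponents.
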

\begin{proof}
Let $s_1$ and $r_1$ be as in Proposition \ref{proposition2}(i). In
view of Plancherel's theorem, we obtain
\begin{equation*}
\begin{split}
\|(1+D_x)^{s_1}(1+D_y)^{r_1}f
\|_{L^2_{xy}}^2&=\int_{\mathbb{R}^2}(1+|\xi|^2)^{s_1}(1+|\eta|^2)^{r_1}
|\widehat{f}(\xi,\eta)|^2d\xi d\eta \\
& \leq c
\int_{\mathbb{R}^2}(1+|\xi|^{2s_1}+|\eta|^{2r_1}+|\xi|^{2s_1}|\eta|^{2r_1})
|\widehat{f}(\xi,\eta)|^2d\xi d\eta \\
& \leq c
\int_{\mathbb{R}^2}(1+|\xi|^{2s_1}+|\eta|^{2r_1}+|\xi|^{6s_1}+|\eta|^{3r_1})
|\widehat{f}(\xi,\eta)|^2d\xi d\eta,
\end{split}
\end{equation*}
where in the last inequality we applied the Young inequality. Now,
splitting the integral into $B_1(0)$ and $\mathbb{R}^2\setminus
B_1(0)$, where $B_1(0)$ denotes the ball of radius 1 centered at the
origin, it is easy to see that
\begin{equation} \label{a7}
\begin{split}
\int_{\mathbb{R}^2}(1+|\xi|^{2s_1}+|\eta|^{2r_1}+|\xi|^{6s_1}+|\eta|^{3r_1})
|\widehat{f}(\xi,\eta)|^2d\xi d\eta \leq
c\int_{\mathbb{R}^2}(1+|\xi|^{6s_1}+|\eta|^{3r_1})
|\widehat{f}(\xi,\eta)|^2d\xi d\eta.
\end{split}
\end{equation}
Write $s_1=1/4+\rho/3$ and $r_1=1/2+2\rho/3$, where $\rho>0$. Thus,
\begin{equation}  \label{a8}
(1+|\xi|^{6s_1}+|\eta|^{3r_1})\leq c
(1+|\xi|^2+|\eta|^2)^{3/4+\rho}.
\end{equation}
Using \eqref{a8} in \eqref{a7} one easily shows the desired conclusion.\\
\end{proof}

Finally, we also recall the Leibniz rule for fractional derivatives.

\begin{lemma}   \label{lemmalei}
Let $0<\alpha<1$ and $1<p<\infty$. Then
$$
\|D^\alpha(fg)-fD^\alpha g -gD^\alpha f \|_{L^p(\mathbb{R})} \leq c
\|g\|_{L^\infty(\mathbb{R})} \|D^\alpha f \|_{L^p(\mathbb{R})},
$$
where $D^\alpha$ denotes either $D^\alpha_x$ or $D^\alpha_y.$
\end{lemma}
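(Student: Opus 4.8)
This is the classical fractional Leibniz (Kato--Ponce type) commutator estimate; the fastest route is simply to quote it (e.g.\ from the appendix of \cite{KPV}), but here is how I would prove it. I would combine an exact pointwise identity, which exposes the algebraic structure of the expression, with a frequency-space (paraproduct / Coifman--Meyer) argument, which supplies the analytic estimate.

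\noindent\textbf{Steps.} \emph{(1)} For $0<\alpha<1$ one has the kernel representation $D^\alpha h(x)=c_\alpha\int_{\mathbb{R}}\frac{h(x)-h(x-y)}{|y|^{1+\alpha}}\,dy$, with $c_\alpha=\bigl(\int_{\mathbb{R}}\frac{1-\cos u}{|u|^{1+\alpha}}\,du\bigr)^{-1}$, proved by taking Fourier transforms (the imaginary part of the $y$-integral vanishes by oddness). \emph{(2)} Inserting $h=fg$ and subtracting $fD^\alpha g$ and $gD^\alpha f$, the triple cancellation collapses the numerator to a product of two differences:
\[
D^\alpha(fg)(x)-f(x)D^\alpha g(x)-g(x)D^\alpha f(x)=c_\alpha\int_{\mathbb{R}}\frac{\bigl(f(x)-f(x-y)\bigr)\bigl(g(x-y)-g(x)\bigr)}{|y|^{1+\alpha}}\,dy.
\]
\emph{(3)} The naive bound $|g(x-y)-g(x)|\le 2\|g\|_{L^\infty}$ is not enough: it reduces matters to $\|g\|_{L^\infty}\int|f(x)-f(x-y)|\,|y|^{-1-\alpha}\,dy$, whose $y$-integral diverges logarithmically at $y=0$ for general $f$ with $D^\alpha f\in L^p$. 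So instead I would read the left-hand side as the bilinear Fourier multiplier $T_m(f,g)$ with symbol $m(\xi,\eta)=|\xi+\eta|^\alpha-|\xi|^\alpha-|\eta|^\alpha$ and, writing $f=D^{-\alpha}F$, normalize to the degree-zero symbol $\widetilde m(\xi,\eta)=|\xi|^{-\alpha}\bigl(|\xi+\eta|^\alpha-|\xi|^\alpha-|\eta|^\alpha\bigr)$; it then suffices to prove $T_{\widetilde m}$ is bounded $L^p\times L^\infty\to L^p$. \emph{(4)} Perform a paraproduct decomposition in the $(\xi,\eta)$-plane into the regimes $|\xi|\ll|\eta|$, $|\xi|\sim|\eta|$, $|\xi|\gg|\eta|$. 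Away from the singular directions $\{\xi=0\}$, $\{\eta=0\}$, $\{\xi+\eta=0\}$ the symbol $\widetilde m$ satisfies the Coifman--Meyer derivative estimates, so the Coifman--Meyer multiplier theorem gives the bound. In the low-$f$-frequency regime $|\xi|\ll|\eta|$ one has $m(\xi,\eta)=-|\xi|^\alpha+O\bigl(|\xi|\,|\eta|^{\alpha-1}\bigr)$; the leading term produces exactly $-(D^\alpha f)\,g$, bounded by $\|D^\alpha f\|_{L^p}\|g\|_{L^\infty}$, and after a further dyadic splitting the (Coifman--Meyer) remainder sums geometrically. The regime $|\xi|\gg|\eta|$ is symmetric; the regime $|\xi|\sim|\eta|$ (including output frequency $|\xi+\eta|\to0$) is handled by placing $g$ in $L^\infty$ on each Littlewood--Paley block, using Bernstein's inequality to absorb the low output frequency, and reassembling via the square function. \emph{(5)} Collect the pieces using the Littlewood--Paley square-function characterization of $L^p$, $1<p<\infty$.

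\noindent\textbf{Main obstacle.} The real difficulty is the regime in step~(4) where $f$ lives at a much lower frequency than $g$: there the derivative in $D^\alpha(fg)$ is genuinely carried by $g$, so it cannot be moved onto either factor by a multiplier estimate, and since $g\in L^\infty$ has no derivatives to give away, the clean physical-space identity of step~(2) does not close the estimate on its own; one must isolate the main term $(D^\alpha f)g$ explicitly and verify that the remainder is a Coifman--Meyer symbol. A secondary technical point is the summation of the high$\times$high contributions, which needs the square-function characterization of $L^p$ rather than a crude triangle inequality. If one prefers not to invoke the Coifman--Meyer theorem, the same paraproduct decomposition can be carried out by hand, each block being dominated pointwise by the Hardy--Littlewood maximal function and then summed as above.
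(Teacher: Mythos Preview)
Your sketch is essentially correct and in fact goes well beyond what the paper does: the paper's ``proof'' is a one-line citation to Theorem~A.12 in the appendix of Kenig--Ponce--Vega \cite{KPV}, with no argument given. Your opening remark that one could simply quote \cite{KPV} is therefore exactly the route the authors take.

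As for the substance of your sketch, it is a faithful outline of the standard proof, and indeed is close in spirit to what \cite{KPV} do in their appendix (a bilinear multiplier analysis after a paraproduct decomposition). Two small remarks. First, in step~(4) the Coifman--Meyer theorem in its most commonly stated form covers $L^{p_1}\times L^{p_2}\to L^{p}$ with both exponents finite; the endpoint $p_2=\infty$ that you need here requires a slight extension (which is known), or else the block-by-block argument you allude to, so it is worth flagging explicitly which version you invoke. Second, the pointwise identity in step~(2) is more than just motivation: combined with the elementary bound $\bigl|g(x-y)-g(x)\bigr|\le 2\|g\|_{L^\infty}^{1-\theta}\,\bigl|g(x-y)-g(x)\bigr|^{\theta}$ for any $\theta\in(0,1)$ and a Sobolev/maximal-function argument, it can be pushed through directly in physical space (this is closer to the original Kato--Ponce approach), giving an alternative to the Coifman--Meyer machinery you describe. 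Either route is acceptable; your identification of the low-$f$/high-$g$ regime as the crux is correct.
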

\begin{proof}
See Kenig, Ponce, and Vega \cite[Theorem A.12]{KPV}.\\
\end{proof}

\section{Proofs of Theorems \ref{theorem1}--\ref{globaltheorem}}
\label{local}

We begin this section by showing Theorem \ref{theorem1}. Since the
proof of Theorem \ref{theorem2} is similar, we only sketch it. We
finish the section by proving Theorem \ref{globaltheorem}.

\begin{proof}[Proof of Theorem \ref{theorem1}]
As usual, we consider the integral operator
\begin{equation}\label{Psi}
\Psi(u)(t)=\Psi_{u_0}(u)(t):=U(t)u_0+\int_0^t
U(t-t')(u^ku_x)(t')dt',
\end{equation}
and define the metric spaces
$$
\mathcal{Y}_T=\{ u \in C([0,T];H^s(\mathbb{R}^2)); \,\,\,\, \trinorm
u \trinorm <\infty \}
$$
and
$$
\mathcal{Y}_T^a=\{ u \in \mathcal{X}_T; \,\,\,\, \trinorm u \trinorm
\leq a \},
$$
with
$$
\trinorm u \trinorm:= \|u\|_{L^\infty_TH^s_{xy}}+
\|u\|_{L^{p_k}_TL^\infty_{xy}} + \|u_x\|_{L^{12/5}_TL^\infty_{xy}} +
\|D^s_xu_x\|_{L^\infty_xL^2_{yT}} +
\|D^s_yu_x\|_{L^\infty_xL^2_{yT}} + \|u\|_{L^4_xL^\infty_{yT}},
$$
where $a,T>0$ will be chosen later. We assume that $3/4<s<1$ and
$T\leq 1$.

First we estimate the $H^s$-norm of $\Psi(u)$. Let $u\in
\mathcal{Y}_T$. By using Minkowski's inequality, group properties
and then H\"older's inequality, we have
\begin{equation}\label{b5}
\begin{split}
 \|\Psi(u)(t) \|_{L^2_{xy}} & \leq  {\displaystyle c \|u_0\|_{H^s}+
 c\int_0^T \|u\|_{L^2_{xy}} \|u^{k-1}u_x\|_{L^\infty_{xy}} dt'  } \\
& \leq c \|u_0\|_{H^s}+ c  \|u\|_{L^\infty_TL^2_{xy}} \int_0^T
 \|u\|_{L^\infty_{xy}}^{k-1}\|u_x\|_{L^\infty_{xy}} dt' \\
& \leq c \|u_0\|_{H^s}+ c T^{\gamma} \|u\|_{L^\infty_TL^2_{xy}}
\|u_x\|_{L^{12/5}_TL^\infty_{xy}}\|u\|^{k-1}_{L^{p_k}_TL^\infty_{xy}}.
\end{split}
\end{equation}

Using group properties, Minkowski and H\"older's inequalities and
twice Lemma \ref{lemmalei}, we have
\begin{equation}   \label{b6}
\begin{split}
 \|D^s_x & \Psi(u)(t) \|_{L^2_{xy}}  \leq  {\displaystyle  \|D^s_xu_0\|_{L^2_{xy}}+\int_0^T \|D^s_x(u^ku_x)\|_{L^2_{xy}} dt'   }
  \\
 & \leq    {\displaystyle c \|u_0\|_{H^s}+ c\int_0^T
 \|u_x\|_{L^\infty_{xy}}  \|D^s_x(u^k)\|_{L^2_{xy}}dt' + c\int_0^T
 \|u^kD^s_xu_x\|_{L^2_{xy}}dt'   }  \\
   & \leq    {\displaystyle c \|u_0\|_{H^s}+ c\int_0^T
 \|u_x\|_{L^\infty_{xy}} \|u\|^{k-1}_{L^\infty_{xy}} \|D^s_xu\|_{L^2_{xy}}dt' + c\int_0^T
 \|u^kD^s_xu_x\|_{L^2_{xy}}dt'   }  \\
 & \leq   {\displaystyle c \|u_0\|_{H^s}+ c \|u\|_{L^\infty_TH^s_{xy}} \int_0^T
 \|u_x\|_{L^\infty_{xy}} \|u\|^{k-1}_{L^\infty_{xy}}dt'  + c\int_0^T
 \|u^kD^s_xu_x\|_{L^2_{xy}}dt'   }.
\end{split}
  \end{equation}
As in \eqref{b5}, from  H\"older's inequality, we get
\begin{equation}\label{b7}
\int_0^T
 \|u_x\|_{L^\infty_{xy}} \|u\|_{L^\infty_{xy}}^{k-1}dt' \leq c T^{\gamma}
\|u_x\|_{L^{12/5}_TL^\infty_{xy}}\|u\|^{k-1}_{L^{p_k}_TL^\infty_{xy}}.
\end{equation}
Moreover,
\begin{equation}\label{b8}
\begin{split}
{\displaystyle \int_0^T
 \|u^kD^s_xu_x\|_{L^2_{xy}}dt'}  & \leq {\displaystyle \int_0^T \|u\|^{k-2}_{L^\infty_{xy}}
 \|u^2D^s_xu_x\|_{L^2_{xy}}dt' }\\
&\leq {\displaystyle T^{\gamma}
\|u\|^{k-2}_{L^{\widetilde{p}_k}_TL^\infty_{xy}}
\|u\|^2_{L^4_xL^\infty_{yT}} \|D^s_xu_x\|_{L^\infty_xL^2_{yT}} },
\end{split}
\end{equation}
where $\widetilde{p}_k=\frac{2(k-2)}{1-2\gamma}$. Note that for
$3\leq k \leq7$ we have $\widetilde{p}_k<p_k$. Thus, combining
\eqref{b7}-\eqref{b8} with \eqref{b6}, we then deduce
\begin{eqnarray}   \label{b9}
\begin{array}{ccl}
{\displaystyle  \|D^s_x\Psi(u)(t) \|_{L^2_{xy}} }  & \leq &
{\displaystyle c \|u_0\|_{H^s}+ c
T^{\gamma}\|u\|_{L^\infty_TH^s_{xy}}
\|u_x\|_{L^{12/5}_TL^\infty_{xy}}\|u\|^{k-1}_{L^{p_k}_TL^\infty_{xy}}
}
  \\\\

& & + {\displaystyle  cT^{\gamma}
\|u\|^{k-2}_{L^{p_k}_TL^\infty_{xy}} \|u\|^2_{L^4_xL^\infty_{yT}}
\|D^s_xu_x\|_{L^\infty_xL^2_{yT}} }.
 \end{array}
  \end{eqnarray}
A similar analysis can be carried out to see that
\begin{eqnarray}   \label{b10}
\begin{array}{ccl}
{\displaystyle  \|D^s_y\Psi(u)(t) \|_{L^2_{xy}} }  & \leq &
{\displaystyle c \|u_0\|_{H^s}+ c
T^{\gamma}\|u\|_{L^\infty_TH^s_{xy}}
\|u_x\|_{L^{12/5}_TL^\infty_{xy}}\|u\|^{k-1}_{L^{p_k}_TL^\infty_{xy}}
}
  \\\\

& & + {\displaystyle  cT^{\gamma}
\|u\|^{k-2}_{L^{p_k}_TL^\infty_{xy}} \|u\|^2_{L^4_xL^\infty_{yT}}
\|D^s_yu_x\|_{L^\infty_xL^2_{yT}} }.
 \end{array}
  \end{eqnarray}
Therefore, from \eqref{b5}, \eqref{b9} and \eqref{b10}, we deduce
\begin{equation}\label{b11}
    \|\Psi(u)\|_{L^\infty_TH^s} \leq  c \|u_0\|_{H^s} +  cT^{\gamma} \trinorm u
    \trinorm^{k+1}.
\end{equation}

Next, we estimate the remaining norms. By taking $\delta=3/4$ and
$\theta=1/4+\sigma$, $0<\sigma\leq\frac{1-12\gamma}{24}$, in Lemma
\ref{lemma5}, we see that $p_k\leq3/\theta$. Thus, Lemma
\ref{lemma5}, group properties and the arguments used to obtain
\eqref{b11} yield
\begin{equation}   \label{b12}
\begin{split}
 \|\Psi(u) \|_{L^{p_k}_TL^\infty_{xy}} & \leq  {\displaystyle  \|U(t)u_0\|_{L^{p_k}_TL^\infty_{xy}}
 +\left\|U(t) \left( \int_0^t U(-t') (u^ku_x)(t') dt' \right) \right\|_{L^{p_k}_TL^\infty_{xy}}  }
  \\
 &\leq   {\displaystyle c \| u_0\|_{H^{3/4}}+
 c\int_0^T \|u^ku_x\|_{H^{3/4}} dt' } \\
  & \leq   {\displaystyle c \| u_0\|_{H^{s}}+
 c\int_0^T \|u^ku_x\|_{H^{s}} dt' } \\
& \leq  c \|u_0\|_{H^s} +  cT^{\gamma} \trinorm u
    \trinorm^{k+1}.
 \end{split}
  \end{equation}

By choosing  $\varepsilon \sim 1/2$ such that $1-\varepsilon/2 \leq
s$, an application of Lemma \ref{lemma2} together with arguments
similar to those ones used to derive \eqref{b11} imply
\begin{equation}   \label{b13}
\begin{split}
 \|\partial_x\Psi(u) \|_{L^{12/5}_TL^\infty_{xy}} & \leq  {\displaystyle  \|U(t)\partial_xu_0\|_{L^{12/5}_TL^\infty_{xy}}
 +\left\|U(t) \left( \int_0^t U(-t') \partial_x(u^ku_x)(t') dt' \right) \right\|_{L^{12/5}_TL^\infty_{xy}}  }
  \\
& \leq   {\displaystyle c \|D^{-\varepsilon/2}_x \partial_x
u_0\|_{L^2_{xy}}+
 c\int_0^T \|D^{-\varepsilon/2}_x \partial_x(u^ku_x)\|_{L^2_{xy}} dt'
 } \\
 & \leq   {\displaystyle c \|  u_0\|_{H^s}+
 c\int_0^T \| u^ku_x\|_{L^2_{xy}} dt'+
 c\int_0^T \|D^s_x (u^ku_x)\|_{L^2_{xy}} dt' }  \\
& \leq  c \|u_0\|_{H^s} + cT^{\gamma} \trinorm u
    \trinorm^{k+1}.
  \end{split}
  \end{equation}

Applying Lemma \ref{lemma1}, group properties, Minkowski and
H\"older inequalities, we obtain
\begin{equation}
\begin{split}
\|D^s_x\partial_x\Psi(u) \|_{L^{\infty}_xL^2_{yT}}\!\leq\!&
 \,\,\|\partial_xU(t)D^s_x u_0\|_{L^{\infty}_xL^2_{yT}}\\
\!& + \!\left\|\partial_xU(t) \left( \int_0^t\! U(-t')
D^s_x(u^ku_x)(t') dt'
\right) \right\|_{L^{\infty}_xL^2_{yT}}\\
\label{b14}\leq &\,\,{\displaystyle c \|D^s_x  u_0\|_{L^2_{xy}}+ c\int_0^T \|D^s_x (u^ku_x)\|_{L^2_{xy}} dt' }\\
 \leq &\,\, c \|u_0\|_{H^s} + cT^{\gamma} \trinorm u
    \trinorm^{k+1}
\end{split}
\end{equation}
and
\begin{equation}   \label{b15}
\begin{split}
\|D^s_y\partial_x\Psi(u) \|_{L^{\infty}_xL^2_{yT}}\! \leq & \,\,
\|\partial_xU(t)D^s_y u_0\|_{L^{\infty}_xL^2_{yT}}
 \\
\!&+\!\left\|\partial_xU(t) \left( \int_0^t U(-t') D^s_y(u^ku_x)(t') dt' \right) \right\|_{L^{\infty}_xL^2_{yT}}\\
\leq &\,\,{\displaystyle c \|D^s_y  u_0\|_{L^2_{xy}}+ c\int_0^T \|D^s_y (u^ku_x)\|_{L^2_{xy}} dt'}\\
\leq &\,\, c \|u_0\|_{H^s} +  cT^{\gamma} \trinorm u
    \trinorm^{k+1}.
\end{split}
\end{equation}
Finally, an application of Corollary \ref{corollary1}, Minkowski's
inequality, group properties, and arguments previously used yield
\begin{eqnarray}   \label{b16}
\begin{array}{ccl}
 \|\Psi(u) \|_{L^4_xL^\infty_{yT}}  & \leq &  {\displaystyle \|U(t) u_0\|_{L^4_x L^\infty_{yT}}
 +\left\| U(t) \left( \int_0^t U(-t')(u^ku_x)(t') dt' \right) \right\|_{L^4_xL^\infty_{yT}}  }
  \\\\

 & \leq & {\displaystyle c\|u_0\|_{H^s} + c\int_0^T \|u^ku_x\|_{H^s}dt'}  \\\\

 & \leq & c \|u_0\|_{H^s} +  cT^{\gamma} \trinorm u
    \trinorm^{k+1}.
  \end{array}
  \end{eqnarray}

Therefore, from \eqref{b11}--\eqref{b16}, we infer
$$
\trinorm \Psi(u)\trinorm \leq  c\|u_0\|_{H^s} + cT^{\gamma}
\trinorm u
    \trinorm^{k+1}.
$$
Choose $a=2c \|u_0\|_{H^s}$, and $T>0$ such that
$$
c\,a^k T^{\gamma} \leq \frac{1}{4}.
$$
Then, it is easy to see that $\Psi:\mathcal{Y}_T^a \mapsto
\mathcal{Y}_T^a$ is well defined. Moreover, similar arguments show
that $\Psi$ is a contraction. To finish the proof we use standard
arguments, thus, we omit the details. This completes the proof of
Theorem \ref{theorem1}.
\end{proof}

\begin{proof}[Proof of Theorem \ref{theorem2}]
The proof is very similar to that of Theorem \ref{theorem1}. So, we
give only the main steps. Assume $s_k<s<1$ and $0<T\leq1$. Define
the metric space
$$
\mathcal{X}_T=\{ u \in C([0,T];H^s(\mathbb{R}^2)); \,\,\,\, \trinorm
u \trinorm_{s,k} <\infty \}
$$
with
$$
\trinorm u \trinorm_{s,k}:= \|u\|_{L^\infty_TH^s_{xy}}+
\|u\|_{L^{\widetilde{p}_k}_TL^\infty_{xy}} +
\|u_x\|_{L^{12/5}_TL^\infty_{xy}} +
\|D^s_xu_x\|_{L^\infty_xL^2_{yT}} +
\|D^s_yu_x\|_{L^\infty_xL^2_{yT}} + \|u\|_{L^4_xL^\infty_{yT}}.
$$

We first note that since $k\geq8$ we have $\widetilde{p}_k>p_k$,
where $p_k=\frac{12(k-1)}{7-12\gamma}$ is given in Theorem
\ref{theorem1}. Hence, similarly to estimates
\eqref{b5}-\eqref{b10}, we establish that
\begin{equation}\label{b17}
    \|\Psi(u)\|_{L^\infty_TH^s} \leq  c \|u_0\|_{H^s} +  cT^{\gamma} \trinorm u
    \trinorm^{k+1}_{s,k},
\end{equation}
where $\Psi$ is the integral operator given in \eqref{Psi}. The
estimates \eqref{b13}-\eqref{b16} also hold here without any change.
What is left, is to show a similar estimate as \eqref{b12}. Here, to
use Lemma \ref{lemma5} we take $\delta=s$ and $\theta=1-s+\sigma$,
where $\sigma$ and $\gamma$ are chosen such that
\begin{equation}\label{b18}
s>s_k+\frac{6\gamma}{2(k-2)}+\sigma.
\end{equation}
The inequality \eqref{b18} promptly implies that
$\widetilde{p}_k\leq 3/\theta$. Thus, in view of Lemma \ref{lemma5},
we obtain
\begin{equation*}
\begin{split}
 \|\Psi(u) \|_{L^{\widetilde{p}_k}_TL^\infty_{xy}}
  & \leq   {\displaystyle c \| u_0\|_{H^{s}}+
 c\int_0^T \|u^ku_x\|_{H^{s}} dt' } \\
& \leq  c \|u_0\|_{H^s} +  cT^{\gamma} \trinorm u
    \trinorm^{k+1}_{s,k}.
 \end{split}
  \end{equation*}
Collecting all of our estimates, we then deduce
$$
\trinorm \Psi(u)\trinorm_{s,k} \leq  c\|u_0\|_{H^s} + cT^{\gamma}
\trinorm u \trinorm^{k+1}_{s,k}.
$$
The rest of the proof runs as before.
\end{proof}

\begin{proof}[Proof of Theorem \ref{ill-posedness}]
We start by recalling some facts about solitary wave  for the
generalized ZK equation. In fact, solitary wave  are special
solutions of the equation in \eqref{IVP} having the form
$u(x,y,t)=\ff_c(x-ct,y)$, for some $c\in \mathbb{R}$. Thus,
substituting this form of $u$ in \eqref{IVP} and integrating once,
we see that $\ff_c$ must satisfy
\begin{equation}  \label{solzk}
-c\ff_c+\Delta\ff_c+\frac{1}{k+1}\ff^{k+1}_c=0.
\end{equation}

The following lemma is well known and will be sufficient to
establish our result.

\begin{lemma}
Let $c>0$. Then equation \eqref{solzk} admits a positive, radially
symmetric  solution $\varphi_c\in H^1(\mathbb{R}^2)$. Moreover,
$\varphi_c\in C^\infty(\mathbb{R}^2)$, and there exists $\rho>0$
such that for all multi-index $\alpha\in\mathbb{N}^2$ with
$|\alpha|\leq2$, one has $|D^{\alpha}\ff_c(x)|\leq C_\alpha
e^{-\rho|x|}$, where $C_\alpha$ depends only on $\alpha$.
\end{lemma}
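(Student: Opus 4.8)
The plan is to normalize the parameter $c$ and then treat the three assertions --- existence, $C^\infty$-regularity, and exponential decay --- in that order.

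\textbf{Reduction and existence.} The substitution $\varphi_c(x)=\big((k+1)c\big)^{1/k}\psi(\sqrt{c}\,x)$ transforms \eqref{solzk} into the normalized equation
\begin{equation*}
-\psi+\Delta\psi+\psi^{k+1}=0,
\end{equation*}
so it suffices to produce a positive radial solution $\psi\in H^1(\mathbb{R}^2)$ of the latter. Since $H^1(\mathbb{R}^2)\hookrightarrow L^q(\mathbb{R}^2)$ for every $q\in[2,\infty)$, the power $k+2$ is energy-subcritical for all $k\ge2$, and I would obtain $\psi$ as a rescaling of a minimizer of $J(u)=\|\nabla u\|_{L^2}^2+\|u\|_{L^2}^2$ over $\{u\in H^1(\mathbb{R}^2):\|u\|_{L^{k+2}}=1\}$, in the spirit of the classical work of Berestycki and Lions and of de Bouard \cite{dB}. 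Replacing a minimizing sequence by the Schwarz symmetrizations of the absolute values of its terms lowers $J$ and preserves the constraint, so one may take the sequence radial, nonnegative and nonincreasing; the Strauss compact embedding of radial $H^1(\mathbb{R}^2)$ into $L^{k+2}(\mathbb{R}^2)$ then furnishes a nonnegative radial minimizer, whose Euler--Lagrange equation $-\Delta\psi+\psi=\lambda\psi^{k+1}$ has $\lambda>0$ (test with $\psi$) and becomes the normalized equation after the dilation $\psi\mapsto\lambda^{1/k}\psi$. Finally $-\Delta\psi+\psi=\psi^{k+1}\ge0$ together with the strong maximum principle gives $\psi>0$ on $\mathbb{R}^2$; undoing the first scaling produces $\varphi_c$.

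\textbf{Smoothness.} In $\mathbb{R}^2$ one has $\psi\in H^1\subset L^p_{loc}$ for all $p<\infty$, so $-\Delta\psi=-\psi+\psi^{k+1}\in L^p_{loc}$, and interior $W^{2,p}$-estimates give $\psi\in W^{2,p}_{loc}$, hence $\psi\in C^{1,\alpha}_{loc}$. Then $\psi^{k+1}\in C^{1,\alpha}_{loc}$, Schauder estimates upgrade $\psi$ to $C^{3,\alpha}_{loc}$, and iterating this bootstrap yields $\psi\in C^\infty(\mathbb{R}^2)$, whence $\varphi_c\in C^\infty(\mathbb{R}^2)$.

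\textbf{Exponential decay.} As a smooth radial $H^1$-function, $\varphi_c(x)\to0$ as $|x|\to\infty$, so there is $R>0$ with $\tfrac{1}{k+1}\varphi_c(x)^k\le c/2$ for $|x|\ge R$; on $\{|x|\ge R\}$ equation \eqref{solzk} then gives $-\Delta\varphi_c+\tfrac{c}{2}\varphi_c\le0$. Pick $\rho\in(0,\sqrt{c/2})$ and set $w(x)=Ce^{-\rho|x|}$; a direct computation gives $-\Delta w+\tfrac{c}{2}w=\big(\tfrac{c}{2}-\rho^2+\tfrac{\rho}{|x|}\big)w>0$ for $|x|>0$. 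Choosing $C$ so large that $w\ge\varphi_c$ on $\{|x|=R\}$ and applying the comparison principle for $-\Delta+\tfrac{c}{2}$ on the exterior domain $\{|x|>R\}$ --- where $\varphi_c$ and $w$ both vanish at infinity --- yields $0\le\varphi_c(x)\le Ce^{-\rho|x|}$ on $\mathbb{R}^2$ after enlarging $C$. For the first- and second-order derivatives I would re-enter the equation: on each ball $B_1(x_0)$ with $|x_0|\ge R+1$ the right-hand side $-c\varphi_c+\tfrac1{k+1}\varphi_c^{k+1}$ has $L^p$- and $C^{0,\alpha}$-norms of size $O(e^{-\rho|x_0|})$, so successive $W^{2,p}$- and Schauder-estimates on concentric balls give $\|\varphi_c\|_{C^{2,\alpha}(B_{1/4}(x_0))}\le C\,e^{-\rho|x_0|}$, which is exactly the asserted bound for all $|\alpha|\le2$, the constant absorbing the dependence on $\alpha$.

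\textbf{Main obstacle.} The only step with genuine content is the compactness in the existence argument; symmetrization together with the Strauss radial embedding neutralizes the loss of compactness caused by translation invariance, and subcriticality is automatic in two dimensions, so no concentration-compactness machinery is needed. The regularity bootstrap and the barrier comparison for the decay are then entirely routine.
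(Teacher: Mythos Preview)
Your argument is correct. The paper itself does not give a proof of this lemma at all: it simply cites Berestycki and Lions \cite{bl}. What you have written is a faithful, self-contained reconstruction of the classical variational approach behind that citation --- constrained minimization plus Schwarz symmetrization and the Strauss radial compact embedding for existence, elliptic bootstrap for smoothness, and a supersolution/barrier comparison for exponential decay of $\varphi_c$ and its derivatives. So there is no methodological divergence to discuss; you have supplied the details that the paper chose to outsource.
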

\begin{proof}
See Berestycki and Lions \cite{bl}.
\end{proof}

 It is easy to see that
$$
\ff_c(x,y)=c^{1/k}\ff_1\left(\sqrt{c}x,\sqrt{c}y\right),\qquad
\textrm{for all} \quad c>0,
$$
where $\ff_1$ is the solution of \eqref{solzk} with $c=1$. Thus,
since
\begin{equation}\label{fourierphi}
\widehat{\ff}_c(\xi,\eta)=c^{1/k-1}\widehat{\ff}_1\left(\frac{\xi}{\sqrt{c}},\frac{\eta}{\sqrt{c}}\right),
\end{equation}
one easily checks that
\begin{equation} \label{u1}
\begin{split}
\left\|\ff_c\right\|_{\dot{H}^{s_c(k)}}&=
c^{1/k-1/2+s_c(k)/2}\left\|\ff_1\right\|_{\dot{H}^{s_c(k)}}=\left\|\ff_1\right\|_{\dot{H}^{s_c(k)}}=:a_0
\end{split}
\end{equation}
Note that the constant $a_0$ does not depend on $c$.

Next, for any $c>0$ fixed, we consider
$$
u_c(x,y,t)=\varphi_c(x-ct,y).
$$
Hence, at $t=0$, we have $u_c(0)=\varphi_c$. Moreover, for any $c_1,
c_2>0$, we obtain
\begin{equation} \label{u2}
\left\|\ff_{c_1}-\ff_{c_2}\right\|_{\dot{H}^{s_c(k)}}^2
=\left\|\ff_{c_1}\right\|_{\dot{H}^{s_c(k)}}^2
+\|\ff_{c_2}\|_{\dot{H}^{s_c(k)}}^2-2
\langle\ff_{c_1},\ff_{c_2}\rangle_{\dot{H}^{s_c(k)}}.
\end{equation}
But, using \eqref{fourierphi} again, we obtain
 \begin{equation*}
 \begin{split}
\langle\ff_{c_1},\ff_{c_2}\rangle_{\dot{H}^{s_c(k)}}&=\int_{\rr^2}D^{s_c(k)}\ff_{c_1}(x,y)
\overline{D^{s_c(k)}\ff_{c_2}}(x,y)\;dxdy\\
&=\int_{\rr^2}|(\xi,\eta)|^{2s_c(k)}\;\widehat{\ff}_{c_1}(\xi,\eta)\;\overline{\widehat{\ff}_{c_2}}(\xi,\eta)\;d\xi d\eta\\
&=(c_1c_2)^{\frac{1}{k}-1}\int_{\rr^2}|(\xi,\eta)|^{2s_c(k)}
\widehat{\ff}_1\left(\frac{\xi}{\sqrt{c_1}},\frac{\eta}{\sqrt{c_1}}\right)
\overline{\widehat{\ff}_1}\left(\frac{\xi}{\sqrt{c_2}},\frac{\eta}{\sqrt{c_2}}\right)\;d\xi d\eta\\
&=\left(\frac{c_2}{c_1}\right)^{\frac{1}{k}-1}\int_{\rr^2}|(\xi,\eta)|^{2s_c(k)}\;\widehat{\ff}_1(\xi,\eta)\;
\overline{\widehat{\ff}_1}\left(\sqrt{\frac{c_1}{c_2}}\xi,\sqrt{\frac{c_1}{c_2}}\eta\right)\;d\xi
d\eta.
 \end{split}
 \end{equation*}
Therefore, as $\theta:=c_1/c_2\to1$, we get
\begin{equation}  \label{u3}
\lim_{\theta\to1}\langle\ff_{c_1},\ff_{c_2}\rangle_{s_c(k)}=a_0^2.
\end{equation}
As a consequence of \eqref{u1}-\eqref{u3}, we then get
 \[
\lim_{\theta\to1}\left\|\ff_{c_1}-\ff_{c_2}\right\|_{\dot{H}^{s_c(k)}}=0.
 \]

On the other hand, for any $t>0$,
\[\begin{split}
\left\|u_{c_1}(t)-u_{c_2}(t)\right\|_{\dot{H}^{s_c(k)}}^2
=\left\|u_{c_1}(t)\right\|_{\dot{H}^{s_c(k)}}^2
+\left\|u_{c_2}(t)\right\|_{\dot{H}^{s_c(k)}}^2-2 \langle
u_{c_1}(t),u_{c_2}(t)\rangle_{\dot{H}^{s_c(k)}}.
\end{split}\]
But, since
$$
\widehat{u_c(t)}(\xi,\eta)=c^{1/k-1}e^{-ic\xi
t}\widehat{\varphi}_1\left(\frac{\xi}{\sqrt{c}},\frac{\eta}{\sqrt{c}}\right),
$$
we deduce
 \[
 \begin{split}
\langle
&u_{c_1}(t),u_{c_2}(t)\rangle_{\dot{H}^{s_c(k)}}\\
&=(c_1c_2)^{\frac{1}{k}-1}\int_{\rr^2}e^{-it\xi(c_1-c_2)}|(\xi,\eta)|^{2s_c(k)}
\widehat{\ff_1}\left(\frac{\xi}{\sqrt{c_1}},\frac{\eta}{\sqrt{c_1}}\right)\;\overline{\widehat{\ff_1}}
\left(\frac{\xi}{\sqrt{c_2}},\frac{\eta}{\sqrt{c_2}}\right)\;d\xi d\eta\\
&=\left(\frac{c_2}{c_1}\right)^{\frac{1}{k}-1}\int_{\rr^2}e^{-it\xi
\sqrt{c_1}(c_1-c_2)}|(\xi,\eta)|^{2s_c(k)}\widehat{\ff_1}(\xi,\eta)\;
\overline{\widehat{\ff_1}}\left(\sqrt{\frac{c_1}{c_2}}\xi,\sqrt{\frac{c_1}{c_2}}\eta\right)\;d\xi
d\eta.
 \end{split}
 \]
By choosing $c_1=m+1$ and $c_2=m\in\N$ and letting $m\to\infty$, an
application of the Riemann-Lebesgue lemma, yields
\[
\lim_{m\to\infty}\langle
u_{c_1}(t),u_{c_2}(t)\rangle_{\dot{H}^{s_c(k)}}=0.
\]
Therefore, for any $t>0$,
\[
\lim_{\theta\to1}\left\|u_{c_1}(t)-u_{c_2}(t)\right\|_{\dot{H}^{s_c(k)}}=\sqrt{2}\;a_0.
 \]
This completes the proof of the theorem.

\end{proof}

\begin{proof}[Proof of Theorem \ref{globaltheorem}]
By using the Gagliardo-Nirenberg interpolation theorem it follows
that
\begin{equation}  \label{gn}
\|u(t)\|_{L^{k+2}}^{k+2}\leq c
\|u(t)\|_{L^2}^2\|\partial_xu(t)\|_{L^2}^k.
\end{equation}
Combining \eqref{mass}, \eqref{energy} and \eqref{gn}, we obtain
that
\begin{equation*}
\begin{split}
\|u(t)\|_{H^1}^2&=I_1(u(t))+I_2(u(t))+c\|u(t)\|_{L^{k+2}}^{k+2}\\
&\leq I_1(u_0)+I_2(u_0)+c\|u_0\|^2_{L^2}\|\partial_xu(t)\|_{L^2}^k.
\end{split}
\end{equation*}
Denote $X(t)=\|u(t)\|_{H^1}^2$. Since $k\geq3$, we then have
$$
X(t)\leq C(\|u_0\|_{H^1})+c\|u_0\|^2_{L^2}X(t)^{1+\frac{k-2}{2}}.
$$
Thus, if $\|u_0\|_{H^1}$ is small enough, a standard argument leads
to $\|u(t)\|_{H^1}\leq C(\|u_0\|_{H^1})$ for $t\in[0,T]$. Therefore,
we can apply the local theory to extend the solution.
\end{proof}

\section{Global well-posedness for the modified ZK} \label{global}

In this section, we consider the Cauchy problem associated with the
modified ZK. The main goal is to prove the global well-posedness
result stated in Theorem \ref{globalm}.

\subsection{Auxiliary results}

We start with the the following local well-posedness result.
The proof is slightly different from that of Theorem 1.1 in
\cite{LP}.

\begin{theorem}   \label{theorem3}
Let $k=2$. For any $u_0 \in H^s(\mathbb{R}^2)$, $s>3/4$, there exist
$T=T(\|u_0\|_{H^s})>0$ and a unique solution of the IVP \eqref{IVP},
defined in the interval $[0,T]$, such that
\begin{equation}\label{c1}
u \in C([0,T];H^s(\mathbb{R}^2)),
\end{equation}
\begin{equation}\label{c2}
\|D^s_xu_x\|_{L^\infty_xL^2_{yT}}  +
\|D^s_yu_x\|_{L^\infty_xL^2_{yT}}  <\infty,
\end{equation}
\begin{equation}\label{c3}
 \|u\|_{L^{p}_T L^\infty_{xy}}+   \|u_x\|_{L^{12/5}_T L^\infty_{xy}}
  <\infty,
\end{equation}
and
\begin{equation}\label{c4}
\|u\|_{L^2_x L^\infty_{yT}}
  <\infty,
\end{equation}
where $p=\frac{2}{1-2\gamma}$ and $\gamma\in (0,5/12)$. In addition,
the following statements hold:

\begin{itemize}
  \item[(i)] For any $T'\in(0,T)$ there exists a neighborhood $V$ of $u_0$ in
$H^s(\mathbb{R}^2)$ such that the map $\widetilde{u}_0\mapsto
\widetilde{u}(t)$ from $V$ into the class defined by
\eqref{c1}--\eqref{c4} is smooth.

  \item[(ii)] The existence time $T$ is given by
\begin{equation}\label{time}
T\sim \|u_0\|_{H^s}^{-2/\gamma}.\\
\end{equation}
\end{itemize}
\end{theorem}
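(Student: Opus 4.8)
The plan is to follow the contraction-mapping scheme used in the proof of Theorem~\ref{theorem1}, but with the function spaces tuned so that the power of $T$ gained in front of the nonlinear estimates is $T^{5/12-}$ rather than $T^{2/3}$, and with the $L^4_xL^\infty_{yT}$ maximal function replaced by the $L^2_xL^\infty_{yT}$ one from Proposition~\ref{proposition2}(ii). Concretely, I would introduce the operator $\Psi$ from \eqref{Psi} (with $k=2$) and the metric space of functions $u\in C([0,T];H^s(\mathbb{R}^2))$ for which
$$
\trinorm u\trinorm := \|u\|_{L^\infty_TH^s_{xy}}+\|u\|_{L^{p}_TL^\infty_{xy}}+\|u_x\|_{L^{12/5}_TL^\infty_{xy}}+\|D^s_xu_x\|_{L^\infty_xL^2_{yT}}+\|D^s_yu_x\|_{L^\infty_xL^2_{yT}}+\|u\|_{L^2_xL^\infty_{yT}}
$$
is finite, with $p=\frac{2}{1-2\gamma}$ and $\gamma\in(0,5/12)$, working on a ball of radius $a=2c\|u_0\|_{H^s}$ and with $0<T\le1$. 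The six component norms of $\Psi(u)$ are estimated exactly as in \eqref{b5}--\eqref{b16}: the $H^s$ part uses Lemma~\ref{lemma1}, the Leibniz rule Lemma~\ref{lemmalei}, and Hölder in $t$; the $L^p_TL^\infty_{xy}$ part uses Lemma~\ref{lemma5}; the $\|u_x\|_{L^{12/5}_TL^\infty_{xy}}$ part uses Lemma~\ref{lemma2}; the smoothing norms use Lemma~\ref{lemma1}; and the $L^2_xL^\infty_{yT}$ part uses Proposition~\ref{proposition2}(ii). In each case the Duhamel term produces $\int_0^T\|u^2u_x\|_{H^s}\,dt'$, and the trilinear structure $\|u^2u_x\|_{H^s}\lesssim \|u\|_{L^\infty_{xy}}\|u\|_{L^4_x L^\infty_{yT}}\|D^s_xu_x\|\cdots$ — here one uses $\|u\|_{L^2_xL^\infty_{yT}}$ in place of $\|u\|_{L^4_xL^\infty_{yT}}$ — combined with Hölder in $t$ produces the decisive factor $T^{\gamma}$. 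The point is that with $p=\frac{2}{1-2\gamma}$ one may take $\gamma$ as large as (just below) $5/12$ while still keeping $p\le 3/\theta$ in Lemma~\ref{lemma5} and $1-\varepsilon/2\le s$ in Lemma~\ref{lemma2}; this is what replaces the $T^{2/3}$ of \cite{LP} by $T^{5/12-}$.

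Having obtained $\trinorm\Psi(u)\trinorm\le c\|u_0\|_{H^s}+cT^{\gamma}\trinorm u\trinorm^{3}$ and the analogous Lipschitz bound for $\Psi(u)-\Psi(v)$, I would close the contraction by choosing $a=2c\|u_0\|_{H^s}$ and then $T>0$ so that $c\,a^2T^{\gamma}\le 1/4$. Solving this last inequality for $T$ gives exactly
$$
T\sim a^{-2/\gamma}\sim\|u_0\|_{H^s}^{-2/\gamma},
$$
which is statement (ii), \eqref{time}. Uniqueness, persistence $u\in C([0,T];H^s)$, and the smoothness of the data-to-solution map in (i) then follow from the standard Banach fixed point / implicit function theorem machinery, exactly as in Theorem~\ref{theorem1}; these details I would omit.

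The main obstacle — really the only nontrivial bookkeeping — is verifying that the exponent constraints from Lemmas~\ref{lemma2} and~\ref{lemma5} and from Proposition~\ref{proposition1} are simultaneously compatible with pushing $\gamma$ up to $5/12$ while keeping $s>3/4$. One has to check, for $\delta$ near $3/4$ and $\theta=1/4+\sigma$ with $\sigma$ small, that $p=\frac{2}{1-2\gamma}\le 3/\theta$ holds precisely when $\gamma<5/12-$, and that the choice $\varepsilon\sim1/2$ needed in Lemma~\ref{lemma2} is consistent with $1-\varepsilon/2\le s$ for $s>3/4$; the $L^2_xL^\infty_{yT}$ maximal estimate of Proposition~\ref{proposition2}(ii) then forces the requirement $s>3/4$ and produces a harmless $T$-dependent constant $c(s,T)$, which, being bounded for $T\le1$, does not affect the scaling \eqref{time}. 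Once these inequalities are checked the proof is a routine repetition of the computations already displayed in \eqref{b5}--\eqref{b16}.
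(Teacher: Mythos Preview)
Your proposal is correct and follows essentially the same approach as the paper's own proof: the same integral operator, the same six-norm metric space (with the $L^2_xL^\infty_{yT}$ maximal norm from Proposition~\ref{proposition2}(ii) replacing the $L^4_x$ one), the same linear estimates applied to each component, and the same contraction condition $cT^\gamma a^2$ small yielding $T\sim\|u_0\|_{H^s}^{-2/\gamma}$. The paper packages the trilinear Duhamel estimate into a separate Lemma~\ref{lemma6}, but the content is exactly the Leibniz-plus-H\"older computation you describe, and your constraint analysis for $\gamma<5/12$ via $p\le 3/\theta$ with $\delta=3/4$, $\theta=1/4+\sigma$ matches the paper's.
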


To simplify the exposition and for further references, we prove first
the following lemma.

\begin{lemma}  \label{lemma6}
Assume $u,v,w$ are sufficiently smooth. Let $p$ be as in Theorem
\ref{theorem3}.
\begin{itemize}
  \item[(i)] For any $T>0$,
  $$\int_0^T \|vwu_x\|_{L^2_{xy}}dt'\leq c T^\gamma
\|u_x\|_{L^{12/5}_T L^\infty_{xy}}\|v\|_{L^{\infty}_T L^2_{xy}}
\|w\|_{L^{p}_T L^\infty_{xy}}.
$$

  \item[(ii)] For any $T>0$ and $s\in(0,1)$,
\begin{equation*}
\begin{split}
\int_0^T \|D^s_x(vwu_x)\|_{L^2_{xy}}dt'& \leq c T^\gamma
\|u_x\|_{L^{12/5}_T L^\infty_{xy}} \left\{ \|v\|_{L^{\infty}_T
H^s_{xy}} \|w\|_{L^{p}_T L^\infty_{xy}}+ \|w\|_{L^{\infty}_T H^s_{xy}} \|v\|_{L^{p}_T L^\infty_{xy}}\right\}\\
&\;\;\;\;+cT^\gamma \|w\|_{L^{p}_T L^\infty_{xy}} \|v\|_{L^2_x
L^\infty_{yT}}\|D^s_xu_x\|_{L^\infty_xL^2_{yT}}.
\end{split}
\end{equation*}
The same still holds if we replace $D^s_x$ by $D^s_y$.
\end{itemize}
\end{lemma}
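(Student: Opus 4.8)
The plan is to estimate the time integrals by first applying Hölder's inequality in the time variable, pulling out the factor $\|u_x\|_{L^{12/5}_TL^\infty_{xy}}$ with its natural power $T^\gamma$, and then distributing the remaining norms among the functions $v,w$ (and, after a Leibniz step, a derivative of one of them) using the mixed-norm spaces available in the statement of Theorem \ref{theorem3}. For part (i), I would write, for fixed $t'$, $\|vwu_x\|_{L^2_{xy}}\leq\|u_x\|_{L^\infty_{xy}}\|v\|_{L^2_{xy}}\|w\|_{L^\infty_{xy}}$, integrate in $t'$ over $[0,T]$, and apply Hölder in $t'$ with exponents $(12/5,\,\infty,\,p)$ together with the factor $T^\gamma$; here one checks that $\frac{5}{12}+\frac1p=\frac{5}{12}+\frac{1-2\gamma}{2}=1-\gamma$, so the leftover exponent is exactly $\gamma$. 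This yields the claimed bound $cT^\gamma\|u_x\|_{L^{12/5}_TL^\infty_{xy}}\|v\|_{L^\infty_TL^2_{xy}}\|w\|_{L^p_TL^\infty_{xy}}$ directly.

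For part (ii), the key step is to move the fractional derivative $D^s_x$ onto the factors one at a time. I would first use the Leibniz rule for fractional derivatives (Lemma \ref{lemmalei}) to write $D^s_x(vwu_x)$ as a sum of terms in which $D^s_x$ falls, respectively, on $v$, on $w$, and on $u_x$, modulo commutator terms controlled by $\|v\|_{L^\infty}$, $\|w\|_{L^\infty}$ times the corresponding fractional derivatives. Concretely: apply Lemma \ref{lemmalei} with $f=vw$ and $g=u_x$ to produce the term $vwD^s_xu_x$ plus $u_xD^s_x(vw)$ plus an error $\lesssim\|u_x\|_{L^\infty}\|D^s_x(vw)\|_{L^2}$; then apply Lemma \ref{lemmalei} once more to $D^s_x(vw)$ to split it into $vD^s_xw+wD^s_xv$ plus an error $\lesssim\|w\|_{L^\infty}\|D^s_xv\|_{L^2}$ (or the symmetric version). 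Collecting terms, $\|D^s_x(vwu_x)\|_{L^2_{xy}}$ is dominated by
$$
\|u_x\|_{L^\infty_{xy}}\big(\|w\|_{L^\infty_{xy}}\|D^s_xv\|_{L^2_{xy}}+\|v\|_{L^\infty_{xy}}\|D^s_xw\|_{L^2_{xy}}\big)+\|w\|_{L^\infty_{xy}}\|vD^s_xu_x\|_{L^2_{xy}}.
$$
For the first two pieces I integrate in $t'$ and apply Hölder in time exactly as in part (i), with exponents $(12/5,p,\infty)$, producing $cT^\gamma\|u_x\|_{L^{12/5}_TL^\infty_{xy}}\big(\|w\|_{L^p_TL^\infty_{xy}}\|v\|_{L^\infty_TH^s_{xy}}+\|v\|_{L^p_TL^\infty_{xy}}\|w\|_{L^\infty_TH^s_{xy}}\big)$. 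For the last piece, I estimate $\|vD^s_xu_x\|_{L^2_{xy}}\leq\|v\|_{L^\infty_y L^2_x}\|D^s_xu_x\|_{L^2_yL^\infty_x}$ pointwise in $t'$ — wait, more carefully: I would bound $\|vD^s_xu_x\|_{L^2_{xy}}$ by a Hölder split in the spatial variables matched to the mixed norms $\|v\|_{L^2_xL^\infty_{yT}}$ and $\|D^s_xu_x\|_{L^\infty_xL^2_{yT}}$; this forces the time integration and the $x$-integration to be arranged so that $v$ carries $L^2_x$, $u_x$-derivative carries $L^\infty_x$, and in $y$ and $t$ one pairs $L^\infty_{yT}$ against $L^2_{yT}$ — after also extracting $\|u_x\|_{L^{12/5}}$'s partner from $\|w\|_{L^p}$ via Hölder in $t$ with the leftover $T^\gamma$. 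This gives the second line $cT^\gamma\|w\|_{L^p_TL^\infty_{xy}}\|v\|_{L^2_xL^\infty_{yT}}\|D^s_xu_x\|_{L^\infty_xL^2_{yT}}$. The same argument verbatim with $D^s_y$ in place of $D^s_x$ works since Lemma \ref{lemmalei} holds for both, and the maximal-function / smoothing norms in Theorem \ref{theorem3} are stated symmetrically.

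The main obstacle is the bookkeeping in the last term: one must arrange the triple Hölder inequality (in $t$, then in $x$, then in $y$) so that each of the three mixed-norm factors $\|u_x\|_{L^{12/5}_TL^\infty_{xy}}$, $\|v\|_{L^2_xL^\infty_{yT}}$, $\|D^s_xu_x\|_{L^\infty_xL^2_{yT}}$ appears with the exact exponents it is defined with, and that the residual power of $T$ comes out to be precisely $\gamma$ rather than something smaller. Concretely the delicate point is that $w$ must absorb a time factor in $L^p_t=L^{2/(1-2\gamma)}_t$ and $u_x$ in $L^{12/5}_t$, leaving $T^{1-1/p-5/12}=T^\gamma$; simultaneously $v$ and $D^s_xu_x$ must be placed in $L^\infty_t$ (for their time slots) so that the $L^\infty_{yT}$ and $L^\infty_xL^2_{yT}$ norms can be used. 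Once the exponent arithmetic is checked — which is the routine but slightly fiddly part — everything assembles as stated.
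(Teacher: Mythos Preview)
Your strategy matches the paper's exactly: H\"older in time for (i), and for (ii) two applications of the fractional Leibniz rule (Lemma~\ref{lemmalei}) followed by H\"older. Two corrections, though. First, your exponent arithmetic in (i) is off: $\tfrac{5}{12}+\tfrac1p=\tfrac{5}{12}+\tfrac{1-2\gamma}{2}=\tfrac{11}{12}-\gamma$, not $1-\gamma$, so the residual power of $T$ is $\tfrac1{12}+\gamma$; this is harmless in the applications (where $T\le1$) but your claimed identity is wrong. Second, in the last term $\int_0^T\|wvD^s_xu_x\|_{L^2_{xy}}\,dt'$ there is \emph{no} factor $\|u_x\|_{L^{12/5}_TL^\infty_{xy}}$ to extract---your closing paragraph lists it among the three factors to be placed, which is a confusion. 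The paper's (and the correct) bookkeeping here is simply
\[
\int_0^T\|w\|_{L^\infty_{xy}}\|vD^s_xu_x\|_{L^2_{xy}}\,dt'
\le \|w\|_{L^2_TL^\infty_{xy}}\,\|vD^s_xu_x\|_{L^2_{xyT}}
\le T^{\gamma}\|w\|_{L^p_TL^\infty_{xy}}\,\|v\|_{L^2_xL^\infty_{yT}}\|D^s_xu_x\|_{L^\infty_xL^2_{yT}},
\]
using Cauchy--Schwarz in $t$, then $\|w\|_{L^2_TL^\infty_{xy}}\le T^{1/2-1/p}\|w\|_{L^p_TL^\infty_{xy}}=T^\gamma\|w\|_{L^p_TL^\infty_{xy}}$, and finally H\"older in $(y,t)$ and in $x$ on the remaining factor. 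Once you fix these two points your write-up is the paper's proof.
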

\begin{proof}
The estimate (i) follows after applying H\"older
inequality. The proof of (ii) is  roughly an application of
Lemma \ref{lemmalei} combined with the H\"older inequality. Indeed,
applying twice Lemma \ref{lemmalei}, we deduce
\begin{equation}\label{c5}
\|D^s_x(vwu_x)\|_{L^2_{xy}}\leq
c\|D^s_xw\|_{L^2_{xy}}\|v\|_{L^\infty_{xy}}\|u_x\|_{L^\infty_{xy}}+
c\|D^s_xv\|_{L^2_{xy}}\|w\|_{L^\infty_{xy}}\|u_x\|_{L^\infty_{xy}}+
c\|wvD^s_xu_x\|_{L^2_{xy}}.
\end{equation}
For the first two terms, from H\"older's inequality, we obtain
\begin{equation*}
\begin{split}
\int_0^T\{\|D^s_xw\|_{L^2_{xy}}&\|v\|_{L^\infty_{xy}}\|u_x\|_{L^\infty_{xy}}+
\|D^s_xv\|_{L^2_{xy}}\|w\|_{L^\infty_{xy}}\|u_x\|_{L^\infty_{xy}}
\}dt'\\
&\leq c T^\gamma \|u_x\|_{L^{12/5}_T L^\infty_{xy}} \left\{
\|v\|_{L^{\infty}_T H^s_{xy}} \|w\|_{L^{p}_T L^\infty_{xy}}+
\|w\|_{L^{\infty}_T H^s_{xy}} \|v\|_{L^{p}_T L^\infty_{xy}}\right\}.
\end{split}
\end{equation*}
For the last term in \eqref{c5}, we obtain
\begin{equation*}
\begin{split}
\int_0^T\|wvD^s_xu_x\|_{L^2_{xy}}dt'&\leq \int_0^T
\|w\|_{L^\infty_{xy}}\|vD^s_xu_x\|_{L^2_{xy}} dt'\\
&\leq \left(\int_0^T\|w\|_{L^\infty_{xy}}^2dt'\right)^{1/2}\|vD^s_xu_x\|_{L^2_{xyT}}\\
&\leq cT^\gamma \|w\|_{L^{p}_T L^\infty_{xy}} \|v\|_{L^2_x
L^\infty_{yT}}\|D^s_xu_x\|_{L^\infty_xL^2_{yT}}.
\end{split}
\end{equation*}
This completes the proof of the lemma.\\
\end{proof}

\begin{proof}[Sketch of proof of Theorem \ref{theorem3}]
The proof is similarly carried out as the proof of Theorem
\ref{theorem1} (see also \cite{LP}). The main difference is
that instead of using the maximal function in Proposition
\ref{proposition2}(i), we use the one in (ii).

Thus, we consider the integral operator
$$
\Phi(u)(t)=\Phi_{u_0}(u)(t):=U(t)u_0+\int_0^t
U(t-t')(u^2u_x)(t')dt',
$$
and define the metric spaces
$$
\mathcal{Z}_T=\{ u \in C([0,T];H^s(\mathbb{R}^2)); \,\,\,\, \trinorm
u \trinorm_{s,2} <\infty \}
$$
and
$$
\mathcal{Z}_T^a=\{ u \in \mathcal{X}_T; \,\,\,\, \trinorm u
\trinorm_{s,2} \leq a \},
$$
with
$$
\trinorm u \trinorm_{s,2}:= \|u\|_{L^\infty_TH^s_{xy}}+
\|u\|_{L^{p}_TL^\infty_{xy}} + \|u_x\|_{L^{12/5}_TL^\infty_{xy}} +
\|D^s_xu_x\|_{L^\infty_xL^2_{yT}} +
\|D^s_yu_x\|_{L^\infty_xL^2_{yT}} + \|u\|_{L^2_xL^\infty_{yT}},
$$
where $a,T>0$ will be chosen later. We assume that $3/4<s<1$ and
$T\leq 1$.

Here, we only estimate the ${L^\infty_TH^s_{xy}}$-norm, because the
others ones are obtained as in Theorem \ref{theorem1}. From group
properties, Minkowski's inequality, and Lemma \ref{lemma6} it follows that
\begin{equation}\label{c6}
\begin{split}
 \|  \Phi(u)(t) \|_{H^s_{xy}}  &\leq  {\displaystyle  \|u_0\|_{H^s_{xy}}+\int_0^T \|u^2u_x\|_{H^s_{xy}} dt'   }
  \\
& \leq \|u_0\|_{H^s_{xy}}+c T^\gamma \|u_x\|_{L^{12/5}_T
L^\infty_{xy}} \left\{ \|u\|_{L^{\infty}_T H^s_{xy}} \|u\|_{L^{p}_T
L^\infty_{xy}}+ \|u\|_{L^{\infty}_T H^s_{xy}} \|u\|_{L^{p}_T
L^\infty_{xy}}\right\}\\
&\;\;+cT^\gamma \|u\|_{L^{p}_T L^\infty_{xy}} \|u\|_{L^2_x
L^\infty_{yT}}\|D^s_xu_x\|_{L^\infty_xL^2_{yT}}+cT^\gamma
\|u\|_{L^{p}_T L^\infty_{xy}} \|u\|_{L^2_x
L^\infty_{yT}}\|D^s_yu_x\|_{L^\infty_xL^2_{yT}}
\end{split}
  \end{equation}
Thus,
$$
 \| \Phi(u) \|_{L^\infty_TH^s_{xy}} \leq
 \|u_0\|_{H^s_{xy}}+cT^\gamma\trinorm u \trinorm_{s,2}^3.
 $$
Finally, gathering together all estimates we see that
$$
 \trinorm \Phi(u) \trinorm_{s,2}\leq
c \|u_0\|_{H^s_{xy}}+cT^\gamma\trinorm u \trinorm_{s,2}^3.
 $$
Choosing $a=2c\; \|u_0\|_{H^s}$, and then $T$  such that
\begin{equation}\label{size}
cT^\gamma a^2<\frac{1}{20},
\end{equation}
we deduce that $\Phi:\mathcal{Z}_T^a\rightarrow\mathcal{Z}_T^a$ is
well defined and is a contraction. The rest of the proof follows
standard arguments. So we will omit it.
\end{proof}

\begin{proposition} \label{proposition3}
Consider the IVP
\begin{equation}\label{mIVP1}
     \left\{
\begin{array}{lll}
{\displaystyle v_t+\partial_x \Delta v+v^2v_x  =  0,  }  \qquad (x,y) \in \mathbb{R}^2, \,\,\,\, t>0 .\\
{\displaystyle  v(x,y,0)=v_0(x,y)}\in H^1(\mathbb{R}^2),
\end{array}
\right.
\end{equation}
Let $T\sim\|v_0\|_{H^1}^{-2/\gamma}$ be the existence time given by
Theorem \ref{theorem3}. If $v_0$ satisfies
\begin{equation}\label{c7}
\|v_0\|_{L^2}<\sqrt3\|\varphi\|_{L^2}\quad and \quad
\|v_0\|_{H^1}\sim N^{1-s},
\end{equation}
where $\varphi$ is the ground state solution of \eqref{solwave},
then
\begin{itemize}
  \item[(i)] the solution $v$ of \eqref{mIVP1} satisfies
  \begin{equation}\label{c8}
    \sup_{[0,T]}\|v(t)\|_{H^1}\leq c N^{1-s}.
  \end{equation}
\item[(ii)] For any $\rho\in(3/4,1)$, the solution $v$ of \eqref{mIVP1} satisfies
  \begin{equation}\label{c9}
    \trinorm v \trinorm_{\rho,2}\sim N^{\rho(1-s)}.
  \end{equation}
\end{itemize}
\end{proposition}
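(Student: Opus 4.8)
The plan is to exploit the two conserved quantities $I_1$ and $I_2$ together with the sharp Gagliardo--Nirenberg inequality in the same spirit as in \cite{LP}, and then feed the resulting $H^1$-bound into the local theory of Theorem \ref{theorem3}. For part (i), I would first recall that for $k=2$ the energy reads
\begin{equation*}
I_2(v(t))=\int_{\rr^2}\Big\{v_x^2+v_y^2-\tfrac{1}{6}v^{4}\Big\}\,dxdy,
\end{equation*}
and that the sharp constant in $\|f\|_{L^4}^4\leq C_{GN}\|f\|_{L^2}^2\|\nabla f\|_{L^2}^2$ is attained by the ground state $\varphi$ of \eqref{solwave}, so that $C_{GN}=3/\|\varphi\|_{L^2}^2$ (this is the standard Weinstein-type identity, here in the anisotropic form $\|\nabla f\|_{L^2}^2=\|f_x\|_{L^2}^2+\|f_y\|_{L^2}^2$). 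Combining $I_2(v(t))=I_2(v_0)$ with this inequality gives
\begin{equation*}
\|\nabla v(t)\|_{L^2}^2\Big(1-\tfrac{1}{6}C_{GN}\|v_0\|_{L^2}^2\Big)\leq I_2(v_0),
\end{equation*}
and the hypothesis $\|v_0\|_{L^2}<\sqrt3\,\|\varphi\|_{L^2}$ makes the factor $1-\tfrac{1}{6}C_{GN}\|v_0\|_{L^2}^2=1-\tfrac12\|v_0\|_{L^2}^2/\|\varphi\|_{L^2}^2$ strictly positive. Hence $\sup_{[0,T]}\|\nabla v(t)\|_{L^2}^2\lesssim I_2(v_0)\lesssim\|v_0\|_{H^1}^2+\|v_0\|_{L^2}^2\|\nabla v_0\|_{L^2}^2\lesssim\|v_0\|_{H^1}^2$, and since $\|v(t)\|_{L^2}=\|v_0\|_{L^2}$ is conserved, $\sup_{[0,T]}\|v(t)\|_{H^1}\lesssim\|v_0\|_{H^1}\sim N^{1-s}$, which is \eqref{c8}. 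I should be slightly careful that this a priori bound is only valid on the interval $[0,T]$ where Theorem \ref{theorem3} already guarantees a solution with enough regularity for $I_1,I_2$ to be conserved; but that is exactly what is claimed.

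For part (ii), the idea is to run the contraction estimates from the proof of Theorem \ref{theorem3} but now in the $\trinorm\cdot\trinorm_{\rho,2}$ norm with $\rho\in(3/4,1)$, using that the fixed point $v$ satisfies $\trinorm v\trinorm_{\rho,2}\leq c\|v_0\|_{H^\rho}+cT^\gamma\trinorm v\trinorm_{\rho,2}^3$. By interpolation $\|v_0\|_{H^\rho}\lesssim\|v_0\|_{L^2}^{1-\rho}\|v_0\|_{H^1}^{\rho}\lesssim N^{\rho(1-s)}$ (the $L^2$ piece is bounded by the fixed constant $\sqrt3\|\varphi\|_{L^2}$). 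The existence time is $T\sim\|v_0\|_{H^s}^{-2/\gamma}$ by \eqref{time}, and again $\|v_0\|_{H^s}\lesssim N^{s(1-s)}$ by the same interpolation, so $T^\gamma\sim\|v_0\|_{H^s}^{-2}\sim N^{-2s(1-s)}$. The closing inequality then reads, with $R:=\trinorm v\trinorm_{\rho,2}$,
\begin{equation*}
R\leq c N^{\rho(1-s)}+cN^{-2s(1-s)}R^3,
\end{equation*}
and a standard continuity/bootstrap argument (the map $\widetilde u_0\mapsto\widetilde u$ being smooth, $R$ depends continuously on the data and is small for small data) shows that the branch $R\sim N^{\rho(1-s)}$ is the one selected, provided the correction term $N^{-2s(1-s)}R^3\sim N^{-2s(1-s)+3\rho(1-s)}$ is not larger than $N^{\rho(1-s)}$; since $s>53/63$ one checks $3\rho-2s\leq\rho$ when $\rho\leq s$, i.e. for $\rho$ up to $s$, and for $\rho\in(s,1)$ one argues more carefully — this is exactly where the smallness of the high-frequency $L^2$ mass and the choice $s>53/63$ enter, matching the parameter count alluded to after Theorem \ref{globalm}. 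This lower-and-upper matching in \eqref{c9} is the delicate point and the main obstacle: getting the matching upper bound $\trinorm v\trinorm_{\rho,2}\lesssim N^{\rho(1-s)}$ rather than just a bound of the form $N^{\text{(something larger)}}$ requires that $T^\gamma$ genuinely absorbs the cubic nonlinearity, which is why the improvement from $T^{2/3}$ to $T^{5/12-}$ in Theorem \ref{theorem3} is essential; the lower bound $\gtrsim N^{\rho(1-s)}$ is comparatively cheap, coming from $\trinorm v\trinorm_{\rho,2}\geq\|v\|_{L^\infty_TH^\rho_{xy}}\geq\|v_0\|_{H^\rho}\sim N^{\rho(1-s)}$ (up to using the group and Duhamel to control the error over the short time $T$).

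In carrying this out I would first dispatch part (i) completely (it is essentially the argument of \cite{LP}, only with the anisotropic gradient), then state precisely the interpolation bounds for $\|v_0\|_{H^\rho}$ and $\|v_0\|_{H^s}$ in terms of $N$, then set up the scalar inequality for $R=\trinorm v\trinorm_{\rho,2}$ and close it by a bootstrap using smoothness/continuity of the solution map from Theorem \ref{theorem3}(i). The one place where I would slow down and check the arithmetic is the exponent comparison $-2s(1-s)+3\rho(1-s)\le\rho(1-s)$, i.e. $3\rho-2s\le\rho$, equivalently $\rho\le s$; for $\rho\in(3/4,1)$ with $s>53/63$ one has $s>3/4$, so there is a genuine range $\rho\in(3/4,s]$ where the estimate is clean, and the remaining range $(s,1)$ is handled by redoing the Duhamel estimate keeping the $L^2$-norm explicitly small rather than bounding it by $\|v\|_{H^\rho}$ — this splitting into low/high frequencies is precisely the device announced in the introduction and is where the threshold $53/63$ is pinned down.
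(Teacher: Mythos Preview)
Your approach to (i) is the paper's, modulo a numerical slip in the sharp Gagliardo--Nirenberg constant: the correct form (as the paper records it, from \cite{We}) is $\tfrac16\|u\|_{L^4}^4\le\tfrac13(\|u\|_{L^2}/\|\varphi\|_{L^2})^2\|\nabla u\|_{L^2}^2$, i.e.\ $C_{GN}=2/\|\varphi\|_{L^2}^2$, not $3/\|\varphi\|_{L^2}^2$. With your value the factor in front of $\|\nabla v(t)\|_{L^2}^2$ becomes $1-\tfrac12\|v_0\|_{L^2}^2/\|\varphi\|_{L^2}^2$, which is \emph{negative} for data with $\|v_0\|_{L^2}^2\in(2\|\varphi\|_{L^2}^2,3\|\varphi\|_{L^2}^2)$, and the bound would fail there. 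With the correct constant the factor is $1-\tfrac13\|v_0\|_{L^2}^2/\|\varphi\|_{L^2}^2$, positive exactly under the hypothesis $\|v_0\|_{L^2}<\sqrt3\,\|\varphi\|_{L^2}$.

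For (ii) you are working much harder than necessary, and the reason is a misreading of the time scale. The proposition \emph{fixes} $T\sim\|v_0\|_{H^1}^{-2/\gamma}$ (the $H^1$ existence time), not $T\sim\|v_0\|_{H^s}^{-2/\gamma}$ as you invoke via \eqref{time}; hence $T^\gamma\sim N^{-2(1-s)}$, not $N^{-2s(1-s)}$. With the correct $T$ your scalar inequality reads $R\le cN^{\rho(1-s)}+cN^{-2(1-s)}R^3$, and the bootstrap closes once $(3\rho-2)(1-s)\le\rho(1-s)$, i.e.\ $\rho\le1$, so it works for every $\rho\in(3/4,1)$ with no case split. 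The discussion of the range $\rho\in(s,1)$, the low/high-frequency splitting, and the threshold $s>53/63$ does not belong here: that threshold enters only in the iteration argument of Theorem \ref{globalm}, not in this proposition. The paper's proof of (ii) is in fact one line---the contraction of Theorem \ref{theorem3} run at regularity $\rho$ already places $v$ in a ball of radius $2c\|v_0\|_{H^\rho}$ on $[0,T_\rho]$ with $T_\rho\sim\|v_0\|_{H^\rho}^{-2/\gamma}\ge T$, and then one uses the interpolation bound $\|v_0\|_{H^\rho}\le cN^{\rho(1-s)}$.
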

\begin{proof}
The proof of \eqref{c8} is similar to the proof of Theorem
\ref{globaltheorem}, but instead of using \eqref{gn}, we use the
following (sharp) Gagliardo-Nirenberg inequality (see \cite{We})
$$
\frac{1}{6} \|u(t)\|_{L^4}^4 \leq \frac{1}{3}
\left(\frac{\|u(t)\|_{L^2}}{\|\varphi\|_{L^2}}\right)^2 \|\nabla
u(t)\|_{L^2}^2.
$$
The proof of \eqref{c9} follows immediately from the proof of
Theorem \ref{theorem3} and the inequality $\|v_0\|_{H^\rho}\leq c
N^{\rho(1-s)}$.\\
\end{proof}

\begin{proposition}\label{proposition4}
Let $v_0\in H^1(\mathbb{R}^2)$ and $w_0\in H^\rho(\mathbb{R}^2)$,
$\rho>3/4$, and let $v$ be the solution given in Proposition
\ref{proposition3}. Then there exists a unique solution $w$ of the
IVP
\begin{equation}\label{mIVP2}
     \left\{
\begin{array}{lll}
{\displaystyle w_t+\partial_x \Delta w+w^2w_x +2wvv_x+2wvw_x+v^2w_x+w^2v_x =  0,  }  \qquad (x,y) \in \mathbb{R}^2, \,\,\,\, t>0, \\
{\displaystyle  w(x,y,0)=w_0(x,y)},
\end{array}
\right.
\end{equation}
defined in the same interval of existence of $v$, $[0,T]$, such that
\begin{equation}\label{c10}
w \in C([0,T];H^\rho(\mathbb{R}^2)),
\end{equation}
\begin{equation}\label{c11}
\|D^\rho_xw_x\|_{L^\infty_xL^2_{yT}}  +
\|D^\rho_yw_x\|_{L^\infty_xL^2_{yT}}  <\infty,
\end{equation}
\begin{equation}\label{c12}
 \|w\|_{L^{p}_T L^\infty_{xy}}+   \|w_x\|_{L^{12/5}_T L^\infty_{xy}}
  <\infty,
\end{equation}
and
\begin{equation}\label{c13}
\|w\|_{L^2_x L^\infty_{yT}}
  <\infty,
\end{equation}
where $p=\frac{2}{1-2\gamma}$ and $\gamma\in (0,5/12)$.
\end{proposition}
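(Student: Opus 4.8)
The plan is to solve \eqref{mIVP2} by the same contraction-mapping scheme used for Theorem \ref{theorem3}, treating $v$ (already constructed in Proposition \ref{proposition3}) as a known, fixed coefficient. Define the integral operator
\[
\Lambda(w)(t):=U(t)w_0+\int_0^t U(t-t')\bigl(w^2w_x+2wvv_x+2wvw_x+v^2w_x+w^2v_x\bigr)(t')\,dt',
\]
and work in the ball $\mathcal{Z}^a_{T}$ (with the norm $\trinorm\cdot\trinorm_{\rho,2}$, now at regularity $\rho>3/4$) for a radius $a$ to be chosen. The nonlinearity splits into three types of terms: the cubic term $w^2w_x$, which is handled exactly as in Theorem \ref{theorem3}; terms with two factors of $w$ and one of $v$ (namely $2wvw_x$ and $w^2v_x$); and terms with one factor of $w$ and two of $v$ (namely $2wvv_x$ and $v^2w_x$). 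For each of these, I apply the linear estimates of Section \ref{linear} — the smoothing effect (Lemma \ref{lemma1}), the Strichartz/maximal-function bounds, and the fractional Leibniz rule (Lemma \ref{lemmalei}) — in precisely the pattern already codified in Lemma \ref{lemma6}. Indeed, Lemma \ref{lemma6} was stated for a general product $vwu_x$, so it applies verbatim to every term above (with the appropriate relabelling of the three smooth functions), producing a bound with a gain $T^{\gamma}$ in front.

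The key point is the interplay between the two regularity scales: $v\in C([0,T];H^1)$ with $\sup_{[0,T]}\|v\|_{H^1}\le cN^{1-s}$ and $\trinorm v\trinorm_{\rho,2}\sim N^{\rho(1-s)}$ from Proposition \ref{proposition3}, while $w$ is sought at the lower regularity $\rho<1$. Because $\rho<1$, whenever a term carries a $D^\rho$ derivative hitting a factor of $v$, we may estimate $\|D^\rho_x v\|_{L^2}\le\|v\|_{H^1}$, so the $v$-dependence is controlled by the (finite, $N$-dependent) quantities from Proposition \ref{proposition3}; the genuinely unknown factor in each product is always $w$ or $w_x$, which is absorbed into $\trinorm w\trinorm_{\rho,2}$. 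Collecting the estimates for all five nonlinear terms yields
\[
\trinorm\Lambda(w)\trinorm_{\rho,2}\le c\|w_0\|_{H^\rho}+cT^{\gamma}\Bigl(\trinorm w\trinorm_{\rho,2}^{3}+C(\trinorm v\trinorm_{\rho,2},\|v\|_{L^\infty_TH^1})\,\trinorm w\trinorm_{\rho,2}^{2}+C(\trinorm v\trinorm_{\rho,2},\|v\|_{L^\infty_TH^1})\,\trinorm w\trinorm_{\rho,2}\Bigr),
\]
and a similar estimate for the difference $\Lambda(w_1)-\Lambda(w_2)$. Choosing $a=2c\|w_0\|_{H^\rho}$ and then $T$ small in terms of $a$ and the $v$-norms makes $\Lambda$ a contraction on $\mathcal{Z}^a_T$, giving existence and uniqueness together with the bounds \eqref{c10}--\eqref{c13}; continuous dependence and persistence follow by the standard arguments already invoked in Theorem \ref{theorem3}.

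The one genuine subtlety — and the main obstacle — is showing that the solution $w$ is in fact defined on \emph{all} of $[0,T]$, the full existence interval of $v$, rather than only on some shorter interval dictated by the size of $w_0$. This is where the linearity of \eqref{mIVP2} in $w$ is essential: since the equation is at most cubic in $w$ with coefficients depending only on the already-controlled $v$, the a priori bound from the contraction argument is of the form $X(t)\le C(\|w_0\|_{H^\rho},\|v\|)\,(1+\text{controlled terms})$, so a Gronwall-type / continuation argument (as in the proof of Theorem \ref{globaltheorem}) prevents $\trinorm w\trinorm_{\rho,2}$ from blowing up before time $T$; one reapplies the local theory finitely many times, each time over a subinterval whose length depends only on the fixed quantities $\|v\|_{L^\infty_TH^1}$ and $\trinorm v\trinorm_{\rho,2}$, and thus covers $[0,T]$. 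I would carry out the fixed-point step first to establish local existence at regularity $\rho$, then do the continuation argument to reach the endpoint $T$.
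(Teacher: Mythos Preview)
Your fixed-point setup and the termwise estimates via Lemma~\ref{lemma6} are exactly what the paper does, and the bound you obtain for $\trinorm\Lambda(w)\trinorm_{\rho,2}$ matches the paper's estimate \eqref{c14} (there written with $\trinorm v\trinorm_{1,2}$ in place of your $C(v)$). The gap is in your last paragraph, where you try to pass from a short-time solution to the full interval $[0,T]$.

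Your continuation argument does not work. The equation \eqref{mIVP2} is \emph{not} linear in $w$: the term $w^2w_x$ is genuinely cubic and the terms $2wvw_x$, $w^2v_x$ are quadratic. Consequently the local existence time produced by the contraction on a subinterval depends on $\|w(t_0)\|_{H^\rho}$, not only on the $v$-norms, so the claim that ``each subinterval has length depending only on $\|v\|_{L^\infty_TH^1}$ and $\trinorm v\trinorm_{\rho,2}$'' is false. There is also no Gronwall structure here (Theorem~\ref{globaltheorem} uses conservation laws, not Gronwall), so the a priori bound you allude to is not available.

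The paper avoids this issue entirely by a different choice of the contraction radius: take
\[
a=2c\,\max\{\|v_0\|_{H^1},\|w_0\|_{H^\rho}\}.
\]
Then $\trinorm v\trinorm_{1,2}\le a$ by the construction of $v$ in Theorem~\ref{theorem3}/Proposition~\ref{proposition3}, and for $w\in\mathcal{W}_T^a$ the cubic bracket in \eqref{c14} is at most $3a^2$. Since $T$ was already chosen in \eqref{size} so that $cT^\gamma(2c\|v_0\|_{H^1})^2<1/20$, and in the intended application one has $\|w_0\|_{H^\rho}\lesssim\|v_0\|_{H^1}$ (cf.\ Corollary~\ref{corollary6}), the \emph{same} $T$ satisfies $cT^\gamma a^2<1/20$, and the contraction closes directly on all of $[0,T]$. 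No iteration is needed: that is the point you are missing.
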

\begin{proof}[Sketch of the proof]
Define the integral operator
$$
\widetilde{\Phi}(w)(t)=\widetilde{\Phi}_{w_0}(w)(t):=U(t)w_0+\int_0^t
U(t-t')F(t')dt'
$$
where
 \begin{equation} \label{F}
F=w^2w_x+2wvv_x+2wvw_x+v^2w_x+w^2v_x.
\end{equation}
and the function metric spaces
$$
\mathcal{W}_T=\{ w \in C([0,T];H^\rho(\mathbb{R}^2)); \,\,\,\,
\trinorm w \trinorm_{\rho,2} <\infty \}
$$
and
$$
\mathcal{W}_T^a=\{ w \in \mathcal{W}_T; \,\,\,\, \trinorm w
\trinorm_{\rho,2} \leq a \},
$$
with
$$
\trinorm w \trinorm_{\rho,2}:= \|w\|_{L^\infty_TH^\rho_{xy}}+
\|w\|_{L^{p}_TL^\infty_{xy}} + \|w_x\|_{L^{12/5}_TL^\infty_{xy}} +
\|D^\rho_xw_x\|_{L^\infty_xL^2_{yT}} +
\|D^\rho_yw_x\|_{L^\infty_xL^2_{yT}} + \|w\|_{L^2_xL^\infty_{yT}},
$$
 As before, we only estimate the $L^\infty_T H^\rho_{xy}$-norm, because from our
linear estimates, all the others estimates reduce to this one.

First, we note that
$$
\|D^\rho_x\widetilde{\Phi}(w)\|_{L^2} \leq
\|w_0\|_{H^\rho}+\int_0^T\|D^\rho_xF\|_{L^2}dt'.
$$
But, successive applications of Lemma \ref{lemma6}(ii) lead to
\begin{equation*}
\begin{split}
\int_0^T \|D^\rho_x(w^2w_x)\|_{L^2_{xy}}dt'& \leq c T^\gamma
\|w_x\|_{L^{12/5}_T L^\infty_{xy}} \|w\|_{L^{\infty}_T H^\rho_{xy}} \|w\|_{L^{p}_T L^\infty_{xy}}\\
&\;\;\;\;+cT^\gamma \|w\|_{L^{p}_T L^\infty_{xy}} \|w\|_{L^2_x
L^\infty_{yT}}\|D^\rho_xw_x\|_{L^\infty_xL^2_{yT}},
\end{split}
\end{equation*}

\begin{equation*}
\begin{split}
\int_0^T \|D^\rho_x(wvv_x)\|_{L^2_{xy}}dt'& \leq c T^\gamma
\|v_x\|_{L^{12/5}_T L^\infty_{xy}} \left\{ \|v\|_{L^{\infty}_T
H^\rho_{xy}} \|w\|_{L^{p}_T L^\infty_{xy}}+ \|w\|_{L^{\infty}_T H^\rho_{xy}} \|v\|_{L^{p}_T L^\infty_{xy}}\right\}\\
&\;\;\;\;+cT^\gamma \|w\|_{L^{p}_T L^\infty_{xy}} \|v\|_{L^2_x
L^\infty_{yT}}\|D^\rho_xv_x\|_{L^\infty_xL^2_{yT}},
\end{split}
\end{equation*}

\begin{equation*}
\begin{split}
\int_0^T \|D^\rho_x(wvw_x)\|_{L^2_{xy}}dt'& \leq c T^\gamma
\|w_x\|_{L^{12/5}_T L^\infty_{xy}} \left\{ \|v\|_{L^{\infty}_T
H^\rho_{xy}} \|w\|_{L^{p}_T L^\infty_{xy}}+ \|w\|_{L^{\infty}_T H^\rho_{xy}} \|v\|_{L^{p}_T L^\infty_{xy}}\right\}\\
&\;\;\;\;+cT^\gamma \|w\|_{L^{p}_T L^\infty_{xy}} \|v\|_{L^2_x
L^\infty_{yT}}\|D^\rho_xw_x\|_{L^\infty_xL^2_{yT}},
\end{split}
\end{equation*}

\begin{equation*}
\begin{split}
\int_0^T \|D^\rho_x(v^2w_x)\|_{L^2_{xy}}dt'& \leq c T^\gamma
\|w_x\|_{L^{12/5}_T L^\infty_{xy}} \|v\|_{L^{\infty}_T
H^\rho_{xy}} \|v\|_{L^{p}_T L^\infty_{xy}}\\
&\;\;\;\;+cT^\gamma \|v\|_{L^{p}_T L^\infty_{xy}} \|v\|_{L^2_x
L^\infty_{yT}}\|D^\rho_xw_x\|_{L^\infty_xL^2_{yT}},
\end{split}
\end{equation*}

\begin{equation*}
\begin{split}
\int_0^T \|D^\rho_x(w^2v_x)\|_{L^2_{xy}}dt'& \leq c T^\gamma
\|v_x\|_{L^{12/5}_T L^\infty_{xy}} \|w\|_{L^{\infty}_T
H^\rho_{xy}} \|w\|_{L^{p}_T L^\infty_{xy}}\\
&\;\;\;\;+cT^\gamma \|w\|_{L^{p}_T L^\infty_{xy}} \|w\|_{L^2_x
L^\infty_{yT}}\|D^\rho_xv_x\|_{L^\infty_xL^2_{yT}}.
\end{split}
\end{equation*}
Thus, we see that
$$
\|D^\rho_x\widetilde{\Phi}(w)\|_{L^2} \leq
\|w_0\|_{H^\rho}+cT^\gamma\left\{\trinorm w \trinorm_{\rho,2}^2 +
\trinorm v \trinorm_{1,2}^2 +\trinorm w \trinorm_{\rho,2}\trinorm v
\trinorm_{1,2}\right\} \trinorm w \trinorm_{\rho,2}.
$$
Analogously, we deduce
$$
\|D^\rho_y\widetilde{\Phi}(w)\|_{L^2} \leq
\|w_0\|_{H^\rho}+cT^\gamma\left\{\trinorm w \trinorm_{\rho,2}^2 +
\trinorm v \trinorm_{1,2}^2 +\trinorm w \trinorm_{\rho,2}\trinorm v
\trinorm_{1,2}\right\} \trinorm w \trinorm_{\rho,2}
$$
and
$$
\|\widetilde{\Phi}(w)\|_{L^2} \leq
\|w_0\|_{L^2}+cT^\gamma\left\{\trinorm w \trinorm_{\rho,2}^2 +
\trinorm v \trinorm_{1,2}^2 +\trinorm w \trinorm_{\rho,2}\trinorm v
\trinorm_{1,2}\right\} \trinorm w \trinorm_{\rho,2}.
$$
Therefore, we have established that
\begin{equation}\label{c14}
\trinorm\widetilde{\Phi}(w)\trinorm_{\rho,2}
\leq \|w_0\|_{H^\rho}+cT^\gamma\left\{\trinorm w \trinorm_{\rho,2}^2
+ \trinorm v \trinorm_{1,2}^2 +\trinorm w \trinorm_{\rho,2}\trinorm
v \trinorm_{1,2}\right\} \trinorm w \trinorm_{\rho,2}.
\end{equation}
Now, by choosing $a=2c\;\max\{\|v_0\|_{H^1}, \|w_0\|_{H^\rho}\}$, we
see that
\begin{equation}\label{size2}
cT^\gamma a^2<\frac{1}{20}.
\end{equation}
As a consequence, $\widetilde{\Phi}:
\mathcal{W}_T^a\rightarrow\mathcal{W}_T^a$ is well defined. To
finish the proof, one proceeds as usual. This proves the theorem. \\
\end{proof}

\begin{corollary}\label{corollary6}
Let $s\in(3/4,1)$ be fixed. Let $v_0\in H^1(\mathbb{R}^2)$ and
$w_0\in H^\rho(\mathbb{R}^2)$ with $3/4<\rho\leq s$. Assume that the
initial data $w_0$ satisfies $\|w_0\|_{H^\rho}\sim N^{\rho-s}$ and
let $v$ and $w$ be the corresponding solutions given in Propositions
\ref{proposition3} and \ref{proposition4}, respectively. Then
$$
\trinorm w \trinorm_{\rho,2}\leq cN^{\rho-s}.
$$
\end{corollary}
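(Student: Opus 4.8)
The plan is to exploit that $w$ is produced in Proposition \ref{proposition4} as the fixed point of $\widetilde{\Phi}$ inside the ball $\mathcal{W}_T^a$, and then to feed the identity $w=\widetilde{\Phi}(w)$ back into the estimate \eqref{c14} in order to upgrade the crude bound $\trinorm w\trinorm_{\rho,2}\le a$ (which is all that membership in $\mathcal{W}_T^a$ gives) to the sharp one $\trinorm w\trinorm_{\rho,2}\le c\,\|w_0\|_{H^\rho}$. This is the usual \emph{bootstrap inside the contraction ball}.

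First I would record the two quantitative facts supplied by the construction in Propositions \ref{proposition3} and \ref{proposition4}: on the one hand $w\in\mathcal{W}_T^a$ with $a=2c\max\{\|v_0\|_{H^1},\|w_0\|_{H^\rho}\}$ and, by \eqref{size2}, $cT^\gamma a^2<\tfrac{1}{20}$; on the other hand, since $v$ is the $H^1$-solution of Proposition \ref{proposition3}, the local well-posedness estimate of Theorem \ref{theorem3} applied at the $H^1$ level gives $\trinorm v\trinorm_{1,2}\le 2c\|v_0\|_{H^1}\le a$ (here $\rho\le s<1$ lets one dominate the $\|v\|_{H^\rho}$-type contributions by the $\|v\|_{H^1}$-type ones). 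In particular both $\trinorm w\trinorm_{\rho,2}$ and $\trinorm v\trinorm_{1,2}$ are $\le a$.

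Next I would substitute $w=\widetilde{\Phi}(w)$ into \eqref{c14}. Bounding each of the three quantities $\trinorm w\trinorm_{\rho,2}^2$, $\trinorm v\trinorm_{1,2}^2$, $\trinorm w\trinorm_{\rho,2}\trinorm v\trinorm_{1,2}$ inside the braces by $a^2$ and then using $cT^\gamma a^2<\tfrac{1}{20}$, one gets
$$
\trinorm w\trinorm_{\rho,2}\le\|w_0\|_{H^\rho}+3\,cT^\gamma a^2\,\trinorm w\trinorm_{\rho,2}\le\|w_0\|_{H^\rho}+\tfrac{3}{20}\,\trinorm w\trinorm_{\rho,2},
$$
hence $\trinorm w\trinorm_{\rho,2}\le\tfrac{20}{17}\|w_0\|_{H^\rho}$. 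Since $\|w_0\|_{H^\rho}\sim N^{\rho-s}$ by hypothesis, this yields $\trinorm w\trinorm_{\rho,2}\le c\,N^{\rho-s}$, as claimed.

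The computation itself is routine; the one step that I would expect to need a little care --- and hence the only (minor) obstacle --- is the inclusion $\trinorm v\trinorm_{1,2}\le a$, that is, checking that the full mixed-norm of the auxiliary $H^1$-solution $v$ really lies inside the contraction ball used in Proposition \ref{proposition4}. This is immediate from $a=2c\max\{\|v_0\|_{H^1},\|w_0\|_{H^\rho}\}\ge 2c\|v_0\|_{H^1}$ once one has the local bound $\trinorm v\trinorm_{1,2}\le 2c\|v_0\|_{H^1}$; the various generic constants are reconciled by choosing the implicit constant in $T\sim\|v_0\|_{H^1}^{-2/\gamma}$ small enough, which keeps \eqref{size2} in force.
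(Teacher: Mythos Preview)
Your argument is correct and coincides essentially line-for-line with the paper's own proof: both feed the fixed-point identity $w=\widetilde{\Phi}(w)$ back into \eqref{c14}, bound the bracketed terms by $a^2$ using $\trinorm w\trinorm_{\rho,2},\,\trinorm v\trinorm_{1,2}\le a$, and then invoke $cT^\gamma a^2<\tfrac{1}{20}$ to absorb the $\tfrac{3}{20}\trinorm w\trinorm_{\rho,2}$ term. The only cosmetic difference is that you spell out the justification of $\trinorm v\trinorm_{1,2}\le a$, which the paper dismisses with ``Analogously''.
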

\begin{proof}
From the proof of Proposition \ref{proposition4}, we have
\begin{equation} \label{c15}
w(t)=U(t)w_0+\int_0^t U(t-t')F(t')dt',\qquad t\in[0,T].
\end{equation}
Moreover, since $\widetilde{\Phi}:
\mathcal{W}_T^a\rightarrow\mathcal{W}_T^a$, we obtain $\trinorm w
\trinorm_{\rho,2}\leq a$, where $a=2c\;\max\{\|v_0\|_{H^1},
\|w_0\|_{H^\rho}\}$. Analogously, $\trinorm v \trinorm_{1,2}\leq a$.
Hence, from \eqref{c14} and \eqref{size}, we get
$$
\trinorm w \trinorm_{\rho,2}\leq \|w_0\|_{H^\rho}+\frac{3}{20}
\trinorm w \trinorm_{\rho,2}.
$$
This completes the proof of the lemma.\\
\end{proof}

\begin{lemma}   \label{lemma7}
Define
$$
\trinorm w
\trinorm_0:=\|w\|_{L^\infty_TL^2_{xy}}+\|w_x\|_{L^\infty_xL^2_{yT}}.
$$
 Let $v_0\in H^1(\mathbb{R}^2)$ and $w_0\in
H^s(\mathbb{R}^2)$, $3/4<s<1$, such that $\|v_0\|_{H^1}\sim N^{1-s}$
and $\|w_0\|_{L^2}\sim N^{-s}$. Let $v$ and $w$ be the solutions
given in Propositions \ref{proposition3} and \ref{proposition4},
respectively. Then,
$$
\trinorm w \trinorm_0\leq cN^{-s}.
$$
\end{lemma}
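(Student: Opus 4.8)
The plan is to run exactly the same fixed-point scheme used to construct $w$ in Proposition \ref{proposition4}, but now estimating only the weaker norm $\trinorm\cdot\trinorm_0$ on the integral equation \eqref{c15}. First I would apply the group properties together with Minkowski's inequality and the smoothing effect \eqref{seffect} of Lemma \ref{lemma1} to get
\[
\trinorm w\trinorm_0\leq c\|w_0\|_{L^2}+c\int_0^T\|F(t')\|_{L^2_{xy}}\,dt',
\]
with $F$ as in \eqref{F}. So the whole matter reduces to bounding $\int_0^T\|F\|_{L^2}\,dt'$ by $T^\gamma$ times a product of the $\trinorm\cdot\trinorm_0$-norms of $w$ and the already-controlled quantities $\trinorm v\trinorm_{1,2}$, $\trinorm w\trinorm_{\rho,2}$ (which by Corollary \ref{corollary6}, applied with $\rho=s$, is $\lesssim N^{0}=1$ since $\|w_0\|_{H^s}\sim N^{0}$... — more precisely $\trinorm w\trinorm_{s,2}\leq c$ because $\rho=s$ gives $N^{\rho-s}=1$).

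The five terms of $F$ are handled by Lemma \ref{lemma6}(i): for each trilinear term $abc_x$ one writes $\int_0^T\|abc_x\|_{L^2}\,dt'\leq cT^\gamma\|c_x\|_{L^{12/5}_TL^\infty_{xy}}\|a\|_{L^\infty_TL^2_{xy}}\|b\|_{L^p_TL^\infty_{xy}}$, always placing in the $L^\infty_TL^2_{xy}$ slot the factor carrying one copy of $w$ (so that the output is linear in $\trinorm w\trinorm_0$) and distributing the remaining two factors between the $L^{12/5}_TL^\infty$ and $L^p_TL^\infty$ slots, using the $\trinorm\cdot\trinorm_{1,2}$-control of $v$ and the $\trinorm\cdot\trinorm_{s,2}$-control of $w$ for those. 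Concretely: $w^2w_x$ uses one $w$ in $L^\infty_TL^2$ and the other $w$'s in $L^{12/5}$ and $L^p$ via $\trinorm w\trinorm_{s,2}$; $v^2w_x$ puts $w$ (via $w_x$) in $L^{12/5}$ — wait, better to put $w$ in the $L^2$ slot by writing $v^2w_x=v\cdot v\cdot w_x$ and bounding $\|v^2w_x\|_{L^2}\leq\|v\|_{L^\infty_{xy}}^2\|w_x\|_{L^2_{yT}}$ after Hölder in $t$, giving $cT^\gamma\|v\|_{L^p_TL^\infty}^2\|w_x\|_{L^\infty_xL^2_{yT}}$; the mixed terms $2wvv_x$, $2wvw_x$, $w^2v_x$ are all of the form (one $w$)$\times$(something from $v,w$)$\times$(a derivative), again arranged so the lone $w$ (or $w_x$) lands in a $\trinorm\cdot\trinorm_0$ slot and everything else in the bounded norms.

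Collecting these gives
\[
\trinorm w\trinorm_0\leq c\|w_0\|_{L^2}+cT^\gamma\bigl(\trinorm v\trinorm_{1,2}^2+\trinorm v\trinorm_{1,2}\trinorm w\trinorm_{s,2}+\trinorm w\trinorm_{s,2}^2\bigr)\trinorm w\trinorm_0.
\]
By Proposition \ref{proposition3}(ii) and Corollary \ref{corollary6} the coefficient $cT^\gamma(\cdots)$ is $\leq 3/20$ exactly as in \eqref{size}/\eqref{size2} (the choice of $a$ there already absorbs $\trinorm v\trinorm_{1,2}$ and $\trinorm w\trinorm_{s,2}$), so the linear term can be moved to the left and one gets $\trinorm w\trinorm_0\leq c\|w_0\|_{L^2}\sim cN^{-s}$. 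The only real subtlety — the step I expect to need the most care — is the bookkeeping that makes the right-hand side genuinely linear in $\trinorm w\trinorm_0$: in each of the five terms one must verify that at least one factor of $w$ (or $w_x$) can be assigned to an $L^\infty_TL^2_{xy}$ or $L^\infty_xL^2_{yT}$ norm while the remaining two factors fall under norms already controlled by the $H^1$-theory for $v$ (Proposition \ref{proposition3}) and the $H^s$-theory for $w$ (Corollary \ref{corollary6}); the term $2wvw_x$ is the tightest, since it has two $w$'s and one must still leave one of them for a $\trinorm\cdot\trinorm_{s,2}$-type slot, but Lemma \ref{lemma6}(i) with the factor $w$ in $L^\infty_TL^2$, $v$ in $L^p_TL^\infty$ and $w_x$ in $L^{12/5}_TL^\infty$ does the job.
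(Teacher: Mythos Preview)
Your proposal follows the same route as the paper: bound $\trinorm w\trinorm_0$ by $c\|w_0\|_{L^2}+c\int_0^T\|F\|_{L^2_{xy}}\,dt'$ via group properties and the smoothing effect \eqref{seffect}, then treat each of the five pieces of $F$ by a H\"older splitting that leaves one factor of $w$ (or $w_x$) in a $\trinorm\cdot\trinorm_0$ slot and the remaining two factors in norms controlled by $\trinorm v\trinorm_{1,2}$ or $\trinorm w\trinorm_{\rho,2}$; the smallness condition \eqref{size2} then allows absorption.

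One correction on the term $v^2w_x$: your written chain $\|v^2w_x\|_{L^2}\leq\|v\|_{L^\infty_{xy}}^2\|w_x\|_{L^2_{yT}}$ leading to $cT^\gamma\|v\|_{L^p_TL^\infty_{xy}}^2\|w_x\|_{L^\infty_xL^2_{yT}}$ does not hold as stated, since pulling both $v$'s into $L^\infty_{xy}$ at fixed $t$ leaves $\|w_x(t)\|_{L^2_{xy}}$, which is not part of $\trinorm w\trinorm_0$. The fix is to use the splitting from the last step of the proof of Lemma~\ref{lemma6}(ii): pull only one $v$ into $L^\infty_{xy}$, then H\"older in $t$ and afterwards in $x$, obtaining
\[
\int_0^T\|v^2w_x\|_{L^2_{xy}}\,dt'\leq cT^\gamma\|v\|_{L^p_TL^\infty_{xy}}\|v\|_{L^2_xL^\infty_{yT}}\|w_x\|_{L^\infty_xL^2_{yT}}\leq cT^\gamma\trinorm v\trinorm_{1,2}^2\,\trinorm w\trinorm_0,
\]
which is exactly what you need. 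With this adjustment your argument matches the paper's.
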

\begin{proof}
It follows from \eqref{c15} and Lemma \ref{lemma1} that
$$
\|w_x\|_{L^\infty_xL^2_{yT}}\leq
c\|w_0\|_{L^2}+c\int_0^T\|F(t')\|_{L^2}dt'.
$$
Now, applying Lemma \ref{lemma6}(i), we deduce
\begin{equation*}
\begin{split}
\int_0^T\|&F(t')\|_{L^2}dt'\leq c T^\gamma\Big\{
\|w\|_{L^p_TL^\infty_{xy}}\|w_x\|_{L^{12/5}_TL^\infty_{xy}}+\|v\|_{L^p_TL^\infty_{xy}}\|v_x\|_{L^{12/5}_TL^\infty_{xy}}\\
&\;\;+\|v\|_{L^p_TL^\infty_{xy}}\|w_x\|_{L^{12/5}_TL^\infty_{xy}}+\|w\|_{L^p_TL^\infty_{xy}}\|v_x\|_{L^{12/5}_TL^\infty_{xy}}
+\|v\|_{L^p_TL^\infty_{xy}}
\|v_x\|_{L^{12/5}_TL^\infty_{xy}}\Big\}\trinorm w \trinorm_0.
\end{split}
\end{equation*}
Hence, as in Corollary \ref{corollary6}, we obtain
$$
\|w_x\|_{L^\infty_xL^2_{yT}}\leq c\|w_0\|_{L^2}+\frac{1}{4}\trinorm
w \trinorm_0.
$$
Similarly, we have
$$
\|w\|_{L^\infty_TL^2_{xy}}\leq c\|w_0\|_{L^2}+\frac{1}{4}\trinorm w
\trinorm_0.
$$
This proves the lemma.\\
\end{proof}

\begin{proposition}\label{proposition5}
Assume that $w_0$ and $v_0$ satisfy the hypotheses of Corollary
\ref{corollary6} and Lemma \ref{lemma7}. Let $v$ and $w$ be the
solutions given in Propositions \ref{proposition3} and
\ref{proposition4}, respectively. Define
\begin{equation}\label{z}
z(t)=\int_0^tU(t-t')F(t')dt',
\end{equation}
where $F$ is given in \eqref{F}. Then,
$$
\|z\|_{L^\infty_TH^1}\sim N^{\frac{3-5s}{2}}.
$$
\end{proposition}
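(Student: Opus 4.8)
The plan is to split the $H^1$ norm of $z$ into its $L^2$ part and its two first-order derivative parts, and to treat these by genuinely different mechanisms, isolating at the end the single term of $F$ responsible for the exact size $N^{(3-5s)/2}$. For the $L^2$ part I would avoid Minkowski's inequality (too lossy here) and instead use that, by the construction in Proposition \ref{proposition4}, $w=U(t)w_0+z$, so $z=w-U(t)w_0$ and hence $\|z\|_{L^\infty_TL^2}\le\|w\|_{L^\infty_TL^2}+\|w_0\|_{L^2}\lesssim N^{-s}$ by Lemma \ref{lemma7} and the hypothesis $\|w_0\|_{L^2}\sim N^{-s}$; since $-s\le(3-5s)/2$ whenever $s\le1$, this is harmless. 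For the derivative parts I would invoke the dual smoothing effect \eqref{dseffect} and its $\partial_y$ analogue: writing $z(t)=U(t)\int_0^tU(-t')F\,dt'$ and commuting $\partial_x$ (resp. $\partial_y$) through $U(t)$ gives $\|D^1_xz\|_{L^\infty_TL^2}+\|D^1_yz\|_{L^\infty_TL^2}\le c\|F\|_{L^1_xL^2_{yT}}$. The matter is thereby reduced to estimating $\|F\|_{L^1_xL^2_{yT}}$ term by term, and, importantly, this route never differentiates $w$ a second time, so only the available $H^s$-control of $w$ is ever required.

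For the upper bound I would estimate each summand by the Hölder split $\|abc_x\|_{L^1_xL^2_{yT}}\le\|a\|_{L^2_xL^\infty_{yT}}\|b\|_{L^2_xL^\infty_{yT}}\|c_x\|_{L^\infty_xL^2_{yT}}$, the last factor carrying the $x$-derivative. The decisive term is $v^2w_x$, where the derivative sits on the small field $w$: using $\|v\|_{L^2_xL^\infty_{yT}}\lesssim N^{(3/4)(1-s)}$ from $\trinorm v\trinorm_{\rho,2}\sim N^{\rho(1-s)}$ (Proposition \ref{proposition3}, $\rho\downarrow3/4$) and $\|w_x\|_{L^\infty_xL^2_{yT}}\lesssim N^{-s}$ from Lemma \ref{lemma7} yields exactly $N^{(3/2)(1-s)}N^{-s}=N^{(3-5s)/2}$. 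The terms $2wvw_x$ and $w^2w_x$ keep the derivative on $w$ but trade a factor $v$ for the smaller $w$, hence are strictly smaller powers of $N$. The delicate summands are $2wvv_x$ and $w^2v_x$, where the derivative falls on the large field $v$ and the naive split is too crude. For these I would exploit the total-derivative structure $F=\partial_x G$ with $G=v^2w+vw^2+\tfrac13w^3$, redistributing derivatives so that the $L^2$-smallness $\|w\|_{L^\infty_TL^2}\sim N^{-s}$ (rather than the expensive maximal-function norm of $w$) is used on the undifferentiated $w$-factor, and cashing in the factor $T^\gamma$ arising from $T\sim\|v_0\|_{H^1}^{-2/\gamma}\sim N^{-2(1-s)/\gamma}$ through a Hölder in $t$; this should push these contributions strictly below $N^{(3-5s)/2}$. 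Carrying out this bookkeeping cleanly for the derivative-on-$v$ terms is the main obstacle of the upper bound.

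For the matching lower bound one cannot evaluate at $t=0$, since $z(0)=0$; the size must come from the dispersive interaction over $[0,T]$. I would isolate the leading contribution $z^{(0)}(t)=\int_0^tU(t-t')\,\partial_x\big(V^2W\big)(t')\,dt'$ obtained from the $v^2w$ part of $G$ by replacing $v,w$ with their linear flows $V=U(t)v_0$ and $W=U(t)w_0$. The difference $z-z^{(0)}$ collects either higher Duhamel iterates (each carrying an extra $T^\gamma$) or monomials with an additional $v$- or $w$-factor, and the upper-bound machinery controls it by a strictly smaller power of $N$. It then remains to bound $\|z^{(0)}\|_{L^\infty_T\dot H^1}$ from below. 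On the Fourier side, $z^{(0)}$ is an explicit trilinear oscillatory integral whose phase is governed by the resonance function $\Omega=\om(\xi)-\om(\xi_1)-\om(\xi_2)-\om(\xi_3)$ with $\om(\xi,\eta)=\xi^3+\xi\eta^2$; using the frequency-localized $v_0$ (low frequencies) and $w_0$ (frequencies $|\xi|\sim N$) of the construction, one localizes to a region where the output frequency is $\sim N$, so the weight $|\xi|^2\sim N^2$ appears, and where the integrand does not cancel, giving $\|z^{(0)}(T)\|_{\dot H^1}\gtrsim N^{(3-5s)/2}$. Establishing the absence of cancellation in this oscillatory integral is the crux of the lower bound, and is where I expect the real difficulty to lie.
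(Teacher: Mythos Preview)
Your overall strategy for the upper bound---splitting into the $L^2$ part and the derivative parts, applying the dual smoothing estimate \eqref{dseffect} for the latter, and identifying $v^2w_x$ as the term that produces exactly $N^{(3-5s)/2}$---matches the paper's. Your handling of the $L^2$ part via $z=w-U(t)w_0$ is a clean shortcut not in the paper (which simply remarks that the $L^2$ and $\partial_y$ estimates follow the same pattern as $\partial_x$).

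There are, however, two points where you diverge unnecessarily. First, the terms $wvv_x$ and $w^2v_x$ with the derivative on $v$ do not require the total-derivative rewriting $F=\partial_x G$. The paper handles them by a different three-factor H\"older split: for instance,
\[
\|wvv_x\|_{L^1_xL^2_{yT}}\le cT^{1/12}\,\|v\|_{L^2_xL^\infty_{yT}}\,\|v_x\|_{L^{12/5}_TL^\infty_{xy}}\,\|w\|_{L^\infty_TL^2_{xy}},
\]
placing $v_x$ in the Strichartz-type norm (part of $\trinorm v\trinorm_{\rho,2}$) and the undifferentiated $w$ in $L^\infty_TL^2_{xy}$ (part of $\trinorm w\trinorm_0$). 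This already achieves what you were after---using $\|w\|_{L^\infty_TL^2}\sim N^{-s}$ rather than the expensive maximal-function norm of $w$---without any algebraic manipulation of $F$. The factor $T^{1/12}$, coming from H\"older in $t$ between $L^2_T$ and $L^{12/5}_T$, then keeps these contributions at or below $N^{(3-5s)/2}$.

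Second, and more importantly: the paper does not prove a lower bound, nor is one needed. Throughout this section the symbol $\sim$ is used loosely to mean ``is bounded above by a constant times'' (compare Proposition \ref{proposition3}(ii), whose proof is a one-line upper bound). The application in the proof of Theorem \ref{globalm} uses only $\|z\|_{L^\infty_TH^1}\le cN^{(3-5s)/2}$. Your proposed oscillatory-integral lower bound, while an interesting question in its own right, is unnecessary here and likely much harder than anything the paper actually does.
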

\begin{proof}
We begin by estimating $\|\partial_x z\|_{L^2_{xy}}$. The main tool
here is the estimate \eqref{dseffect}. Indeed,
\begin{equation*}
\begin{split}
\|\partial_x z\|_{L^2{xy}}&\leq
\|\partial_x\int_0^tU(t-t')F(t')dt'\|_{L^2_{xy}} \\
&\leq
c\left\{\|w^2w_x\|_{L^1_xL^2_{yT}}+\|wvv_x\|_{L^1_xL^2_{yT}}+\|wvw_x\|_{L^1_xL^2_{yT}}
+\|v^2w_x\|_{L^1_xL^2_{yT}}+\|w^2v_x\|_{L^1_xL^2_{yT}}\right\}\\
&=A_1+A_2+A_3+A_4+A_5.
\end{split}
\end{equation*}
Now, from Holder's inequality, we obtain
\begin{equation*}
\begin{split}
A_1\leq c
\|w\|^2_{L^2_xL^\infty_{yT}}\|w_x\|_{L^\infty_xL^2_{yT}}\leq c
\trinorm w\trinorm^2_{\rho,2}\trinorm w \trinorm_0.
\end{split}
\end{equation*}
Applying Corollary \ref{corollary6} and Lemma \ref{lemma7}, we then
get
$$
A_1\leq c N^{-s}\leq c N^{\frac{3-5s}{2}}.
$$
From this point on we apply several times  H\"older's inequality
without mentioning it.
$$
A_2\leq c
T^{1/12}\|v\|_{L^2_xL^\infty_{yT}}\|v_x\|_{L^{12/5}_TL^\infty_{xy}}\|w\|_{L^\infty_TL^2_{xy}}
\leq c T^{1/12}\trinorm v \trinorm^2_{\rho,2}\trinorm w \trinorm_0
$$
Thus, Proposition \ref{proposition3}(ii) and Lemma \ref{lemma7}
yield $A_2\sim N^{\frac{3-5s}{2}}$. To estimate $A_3$, we note that
$$
A_3\leq c
T^{1/12}\|v\|_{L^2_xL^\infty_{yT}}\|w_x\|_{L^{12/5}_TL^\infty_{xy}}\|w\|_{L^\infty_TL^2_{xy}}
\leq c T^{1/12}\trinorm v \trinorm_{\rho,2}\trinorm w
\trinorm_{\rho,2}\trinorm w \trinorm_0.
$$
Since $T\sim N^{-2(1-s)/\gamma}$, $\gamma\in(0,5/12)$, we deduce
that $T^{1/12}\sim N^{-2(1-s)/5}$. Hence, from Proposition
\ref{proposition3}, Corollary \ref{corollary6}, and Lemma
\ref{lemma7}, we infer  $A_3\sim N^{\frac{3-5s}{2}}$. Similarly,
since
$$
A_4\leq c
T^{1/12}\|w\|_{L^2_xL^\infty_{yT}}\|v_x\|_{L^{12/5}_TL^\infty_{xy}}\|w\|_{L^\infty_TL^2_{xy}}
\leq c T^{1/12}\trinorm w \trinorm_{\rho,2}\trinorm v
\trinorm_{\rho,2}\trinorm w \trinorm_0,
$$
we get $A_4\sim N^{\frac{3-5s}{2}}$.

Finally,
$$
A_5\leq c
\|v\|^2_{L^2_xL^\infty_{yT}}\|w_x\|_{L^\infty_xL^2_{yT}}\leq c
\trinorm v\trinorm^2_{\rho,2}\trinorm w \trinorm_0.
$$
Therefore,  Proposition \ref{proposition3} and Lemma \ref{lemma7}
yield $A_5\sim N^{\frac{3-5s}{2}}$. The same analysis can be
performed to estimate  $\|\partial_y z\|_{L^2_{xy}}$ and
$\|z\|_{L^2_{xy}}$. This completes the proof of the proposition.
\end{proof}

\begin{proof}[Proof of Theorem \ref{globalm}]
Let us consider the IVP
\begin{equation}\label{mIVP3}
     \left\{
\begin{array}{lll}
{\displaystyle u_t+\partial_x \Delta u+u^2u_x  =  0,  } \\
{\displaystyle  u(x,y,0)=u_0(x,y)}.
\end{array}
\right.
\end{equation}
Assume  that $u_0\in H^s(\mathbb{R}^2)$, $3/4<s<1$ (a priori) and
satisfies $\|u_0\|_{L^2}<\sqrt3\|\varphi\|_{L^2}$. We split the
datum $u_0$ as
$$
u_0(x)=(\chi_{\{|\xi|<N\}}\widehat{u}_0)^\vee(x)+(\chi_{\{|\xi|\geq
N\}}\widehat{u}_0)^\vee(x)=v_0(x)+w_0(x),
$$
where $N\gg1$ will be chosen later. First, we note that
\begin{equation}\label{c17}
\|v_0\|_{L^2}<\sqrt3\|\varphi\|_{L^2}, \qquad \|v_0\|_{H^1}\sim
N^{1-s},
\end{equation}
and
$$
\|w_0\|_{H^\rho}\sim N^{\rho-s}, \qquad 3/4<\rho \leq s<1.
$$
In view of Propositions \ref{proposition3} and \ref{proposition4},
we can solve the IVPs \eqref{mIVP1} and \eqref{mIVP2}, with initial
data $v_0$ and $w_0$, respectively, obtaining  solutions $v(t)\in
H^1(\mathbb{R}^2)$ and $w(t)\in H^\rho(\mathbb{R}^2)$, for
$t\in[0,T]$, where $T\sim N^{-2(1-s)/\gamma}$, $\gamma\in(0,5/12)$.
Moreover, the solution $u$ of \eqref{mIVP3} can be rewritten as
$$
u(t)=v(t)+U(t)w_0+z(t), \qquad t\in[0,T],
$$
where $z(t)$ is given by \eqref{z}.

Given any $\widetilde{T}>0$, our goal now is to extend the solution
$u$ on the whole interval $[0,\widetilde{T}]$ by an iteration
process.

At the point $t=T$, we have
\begin{equation}\label{c19}
u(T)=v(T)+U(T)w_0+z(T).
\end{equation}
Since $U$ is an unitary group, the function $U(T)w_0$ remains in
$H^s(\mathbb{R}^2)$. We shall show that $v(T)+z(T)$ still satisfies
the condition in \eqref{c17}.

Note that  \eqref{c19} and \eqref{mass} imply
\begin{equation}\label{c20}
\begin{split}
\|v(T)+z(T)\|_{L^2}&\leq \|u(T)-U(T)w_0\|_{L^2}\\
&\leq\|u_0\|_{L^2}+\|w_0\|_{L^2}\\
&\leq\|u_0\|_{L^2}+N^{-s}.
\end{split}
\end{equation}
Thus, for $N$ large enough, we get
$$
\|v(T)+z(T)\|_{L^2}<\sqrt3\|\varphi\|_{L^2}.
$$

Now,  Proposition \ref{proposition3} leads to
$$
\|v\|_{L^\infty_TH^1}\leq cN^{1-s},
$$
and  Proposition \ref{proposition5} tells us that
$$
\|z\|_{L^\infty_TH^1}\leq cN^{\frac{3-5s}{2}}.
$$
Hence, at each step, there is a contribution of $N^{\frac{3-5s}{2}}$
from $\|z\|_{L^\infty_TH^1}$. To reach the time $\widetilde{T}$, we
need to iterate $\widetilde{T}/T$ times, and to guarantee that the
$H^1$-norm grows on the interval $[0,\widetilde{T}]$ as $N^{1-s}$,
that is,
$$
\frac{\widetilde{T}}{T}N^{\frac{3-5s}{2}}\sim
\widetilde{T}N^{2(1-s)/\gamma}N^{\frac{3-5s}{2}}\leq c N^{1-s},
$$
we need to choose $N=N(\widetilde{T})\sim N^{\frac{10}{63s-53}}$,
for $53/63<s<1$.

Finally, from \eqref{c20}, we have a  contribution from the $L^2$-norm
of $N^{-s}$. But, since  $53/63<s<1$, for that chosen
$N(\widetilde{T})$, we deduce
$$
\widetilde{T}N^{2(1-s)/\gamma}N^{-s}\leq c
N^{\frac{63s-53}{10}}N^{2(1-s)/\gamma}N^{-s}\leq c.
$$
This proves the theorem.
\end{proof}

\subsection*{Acknowledgement}  F. L. was partially supported by
CNPq-Brazil and  A. P. was supported by CNPq/Brazil under grant
152234/2007-1.

\end{document}